\newtheorem{theorem}{Theorem}[section]
\newtheorem{proposition}[theorem]{Proposition}
\newtheorem{corollary}[theorem]{Corollary}
\newtheorem{lemma}[theorem]{Lemma}
\newtheorem{remark}[theorem]{Remark}
\newcommand\R{\mathbb{R}}
\newcommand{\la}{\lambda}
\newcommand{\supp}{{\rm supp}{\hspace{.05cm}}}
\numberwithin{equation}{section}
\theoremstyle{definition}
\title
[Schr\"{o}dinger-Poisson systems with critical nonlocal term]
 {The existence of positive least energy solutions for a class of Schr\"{o}dinger-Poisson systems involving critical nonlocal term with general nonlinearity}
\author{Liejun Shen \ and \ Xiaohua Yao}
\address{ Liejun Shen,  Hubei Key Laboratory of Mathematical Sciences and School of Mathematics and Statistics,
 Central China Normal University, Wuhan, 430079, P. R. China }
\email{liejunshen@sina.com}
\address{Xiaohua Yao, Hubei Key Laboratory of Mathematical Sciences and School of Mathematics and Statistics,
 Central China Normal University, Wuhan, 430079, P.R. China}
\email{yaoxiaohua@mail.ccnu.edu.cn }
\date{\today}
\subjclass[2000]{ 35J20, 35J60, 35J92.}
\keywords{Schr\"{o}dinger-Poisson systems, critical nonlocal term, least energy solution, Poho\v{z}aev type manifold, Ambrosetti-Rabinowitz type condition, monotonicity.}
\begin{document}

\maketitle
\begin{abstract} The present study is concerned with the following Schr\"{o}dinger-Poisson system involving critical nonlocal term with general nonlinearity:
$$
\left\{%
\begin{array}{ll}
    -\Delta u+V(x)u- \phi |u|^3u= f(u), & x\in\R^3, \\
    -\Delta \phi= |u|^5, &  x\in\R^3,\\
\end{array}%
\right.
$$
Under certain assumptions on non-constant $V(x)$,
the existence of a positive least energy solution is obtained by using some new analytical skills and Poho\v{z}aev type manifold. In particular, the Ambrosetti-Rabinowitz type condition or monotonicity assumption on the nonlinearity is not necessary.
\end{abstract}


\section{Introduction and main results}
\setcounter{equation}{0}
In this paper, the existence of positive least energy solutions will be proved for
the following nonlinear Schr\"{o}dinger-Poisson systems involving critical nonlocal term:
\begin{equation}\label{mainequation1}
\left\{%
\begin{array}{ll}
    -\Delta u+V(x)u- \phi |u|^3u= f(u), & x\in\R^3, \\
    -\Delta \phi= |u|^5, &  x\in\R^3.\\
\end{array}%
\right.
\end{equation}
The potential $V(x)$ is assumed to verify the following conditions:
\vskip0.3cm
\begin{enumerate}[$(V_1)$]
\item \emph{$V(x)\in C(\R^3,\R)$ is weakly differentiable, $(x,\nabla V)$ and the best Sobolev constant $S$ (see \eqref{Sobolev} below) satisfy the following inequality:
\[
\bigg(\int_{\R^3}|(x,\nabla V)|^{\frac{3}{2}}dx\bigg)^{\frac{2}{3}}\leq S,
\]
where $(\cdot,\cdot)$ is the usual inner product in $\R^3$;}
\end{enumerate}
\vskip0.3cm
\begin{enumerate}[$(V_2)$]
\item \emph{there exists a positive constant $V_\infty$ such that for all $x\in\R^3$
$$
0\leq V(x)\leq \mathop{\lim\inf}_{|y|\to+\infty}V(y)=V_\infty<+\infty,
$$
where the two inequalities are strict in some subsets of positive Lebesgue measure.}
\end{enumerate}

 Since we are interested in positive solutions, without loss of generality, we assume that $f\in C(\R,\R)$ vanishes in $(-\infty,0)$ and satisfies the following conditions:
\begin{enumerate}[$(f_1)$]
\item  \emph{$f\in C(\R,\R^+)$ and $f(t)=o(t)$ as $t\to 0^+$, here $\R^+=[0,+\infty)$;}
\end{enumerate}
\vskip3mm
\begin{enumerate}[$(f_2)$]
  \item  \emph{$f$ has a subcritical growth at infinity, that is, $f(t)=o(t^5)$ as $t\to \infty$;}
\end{enumerate}
\vskip3mm
\begin{enumerate}[$(f_3)$]
  \item  \emph{there exist constants $\mu>0$ and $q\in(1,5)$ such that $F(t)\geq \mu t^{q+1}$, where $F(t)=\int_{0}^{t}f(s)ds$.}
\end{enumerate}

Due to the real physical meaning, the following Schr\"{o}dinger-Poisson system
\begin{equation}\label{introduction1}
 \left\{%
\begin{array}{ll}
    -\Delta u+V(x)u+\phi u=f(x,u), & x\in\R^3, \\
    -\Delta \phi=u^2, &  x\in\R^3,\\
\end{array}%
\right.
\end{equation}
has been studied extensively by many scholars in the last several decades. The system like \eqref{introduction1} firstly introduced by Benci and Fortunato \cite{Benci1} was used to
describe solitary waves for nonlinear Sch\"{o}rdinger type equations and look for the existence of standing waves interacting with an unknown electrostatic field. We refer the readers to \cite{Benci1,Benci2,P. L. Lions3,Markowich} and the references therein to get a more physical background of the system \eqref{introduction1}.

In recent years, by classical variational
methods, there are many interesting works about the existence and non-existence of positive solutions, positive
ground states, multiple solutions, sign-changing solutions and semiclassical states
to the system \eqref{introduction1} with different assumptions on the potential $V(x)$ and the nonlinearity $f(x,u)$ were established. If $V(x)\equiv 1$ and $f(x,u)=|u|^{p-1}u$, T. d'Aprile and D. Mugnai \cite{dAprile2} showed that the system \eqref{introduction1} has no nontrivial solutions when $p\leq 1$ or $p\geq 5$. For the case $4\leq p<6$, the existence of radial and non-radial solutions was studied in \cite{Coclite,dAprile1,dAprile3}. D. Ruiz \cite{Ruiz} proved the existence and nonexistence of nontrivial solutions when $1<p<5$. When $V(x)\equiv0$ and $f(x,u)=g(u)$,
A. Azzollini, P. d'Avenia and A. Pomponio \cite{Azzollini1} investigated the existence of nontrivial radial solutions when $\mu\in(0,\mu_0)$ for the following system
\begin{equation}\label{introduction4}
   \left\{%
\begin{array}{ll}
    -\Delta u+ \mu\phi u=g(u), & x\in\R^3, \\
    -\Delta \phi=\mu u^2, &  x\in\R^3.\\
\end{array}%
\right.
\end{equation}
under the conditions $g\in C(\R)$ and
\[
 (H_1)\ \  -\infty<\mathop{\lim\inf}_{t\to0}\frac{g(t)}{t}\leq\mathop{\lim\sup}_{t\to0}\frac{g(t)}{t}=-m<0;\ \ \ \ \ \ \ \ \ \ \ \ \ \ \ \   \ \ \ \ \ \ \ \ \ \ \ \   \ \ \ \ \ \ \ \ \ \ \ \ \ \ \ \   \ \ \ \ \ \ \ \ \ \ \ \ \ \ \ \
\]
\[
 (H_2)\ \  -\infty\leq\mathop{\lim\sup}_{t\to\infty}\frac{g(t)}{t^{{5}}}\leq0;\ \ \ \ \ \ \ \ \ \ \ \ \ \ \ \   \ \ \ \ \ \ \ \ \ \ \ \   \ \ \ \  \ \ \ \ \ \ \ \ \ \ \ \   \ \ \  \ \ \ \ \ \ \ \ \ \ \ \   \ \ \ \ \ \ \  \ \ \ \ \ \ \ \   \ \ \ \ \ \ \ \ \ \ \ \   \ \ \ \  \ \ \  \ \ \ \ \ \ \ \ \ \ \ \ \ \ \  \ \ \ \ \ \ \ \ \ \ \ \
\]
\[
 (H_3)\ \  \text{there exists}\ \ \xi>0\ \  \text{such that}\ \ \int_0^\xi g(t)dt>0.\ \ \ \ \ \ \ \ \ \ \ \ \ \ \ \   \ \ \ \ \ \ \ \ \ \ \ \   \ \ \ \ \ \  \ \ \ \ \ \ \ \   \ \ \ \ \ \ \ \ \ \ \ \   \ \ \ \  \ \ \ \ \ \ \ \
\]
We mention here that the hypotheses $(H_1)-(H_3)$ are the so-called Berestycki-Lions conditions, which were introduced
in H. Berestycki and P. L. Lions \cite{Berestycki} for the derivation of the ground state solution of \eqref{introduction4}. If $V(x)\not\equiv constant$ and $f(x,u)=|u|^{p-1}u+\mu |u|^{4}u$ with $2<p<5$, the existence of positive ground state was obtained by  Z. Liu and S. Guo \cite{Liu0}. By using superposition principle established by
N. Ackermann \cite{Ackermann}, the system \eqref{introduction1} with a periodic potential was studied by  J. Sun and S. Ma \cite{Sun}, where the existence of infinitely many geometrically distinct solutions was proved. For other related and important results, we refer the readers to \cite{Alves2,He,Huang,Jiang,Ruiz2,Zhang,Zhao} and their
references.

However, the results for the following general Schr\"{o}dinger-Poisson system
\begin{equation}\label{mmmmmm}
  \left\{%
\begin{array}{ll}
    -\Delta u+u+p \phi g(u)=f(x,u), & x\in\R^3, \\
    -\Delta \phi=2pG(u), &  x\in\R^3\\
\end{array}%
\right.
\end{equation}
are not so fruitful as the case $g(u)=u$ and $p\in\R$, where $|g(t)|\leq C(|t|+|t|^s)$ with $s\in (1,4)$, please see \cite{Azzollini3,Li2} for example. When $s=4$ in \eqref{mmmmmm},
A. Azzollini and P. d'Avenia \cite{Azzollini2} firstly studied the following Schr\"{o}dinger-Poisson system with critical nonlocal term
\begin{equation}\label{aaaaaaaa}
 \left\{%
\begin{array}{ll}
    -\Delta u=\mu u+p\phi|u|^3u, & x\in B_R, \\
    -\Delta \phi=p|u|^5, &  x\in B_R,\\
    u=\phi=0,& \text{on} \ \ \partial B_R.
\end{array}%
\right.
\end{equation}
 Note that although the second equation can be solved by a Green's function, the term $p|u|^5$ will result in a nonlocal critically growing nonlinearity in \eqref{aaaaaaaa}. After it, by assuming the conditions
 \vskip3mm
 \begin{enumerate}[$(h_1)$]
  \item  \emph{$h\in C(\R^+,\R^+)$ and $\lim_{t\to0^+}\frac{h(t)}{bt+t^5}=0$;}
\end{enumerate}
\vskip3mm
\begin{enumerate}[$(h_2)$]
  \item  \emph{$\lim_{t\to\infty}\frac{h(t)}{t^5}=0$;}
\end{enumerate}
\vskip3mm
\begin{enumerate}[$(h_3)$]
  \item \emph{there exist $r\in(4,6)$ and $A,B>0$ such that $H(t)\geq At^r-Bt^2$ for $t\geq 0$, here $H(t)=\int_{0}^{t}h(s)ds$,}
\end{enumerate}
  \vskip1mm
and using a monotonic trick introduced by L. Jeanjean \cite{Jeanjean}, F. Li, Y. Li and J. Shi \cite{Li} specially proved the following Schr\"{o}dinger-Poisson system
\[
  \left\{%
\begin{array}{ll}
    -\Delta u+bu-\phi |u|^3u=h(u), & x\in\R^3, \\
    -\Delta \phi=|u|^5, &  x\in\R^3,\\
\end{array}%
\right.
\]
possesses at least one positive radially symmetric solution when $b>0$ is a constant.

To the best of our knowledge, the Schrodinger-Poisson system with critical nonlocal term was rarely studied in \cite{Azzollini2,Li,Liu,Shen}. Inspired by the all works mentioned above, particularly, by the results in \cite{Li}, we try to prove the existence of positive least energy solution
for the system \eqref{mainequation1}.

Our main result is as follows:
\begin{theorem}\label{maintheorem1}
 Assume $V(x)$ and $f(x)$ satisfy the conditions $(V_1)-(V_2)$ and $(f_1)-(f_3)$, respectively. Then the system \eqref{mainequation1} admits a positive least energy solution $(u,\phi)\in H^1(\R^3)\times D^{1,2}(\R^3)$ for any $\mu>0$ with $q\in(3,5)$, or a sufficiently large $\mu>0$ with $q\in(1,3]$.
\end{theorem}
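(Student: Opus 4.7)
\emph{Step 1 (Variational framework and Poho\v{z}aev manifold).} I would first reduce the system to a single equation: since $u\in H^1(\R^3)\hookrightarrow L^6(\R^3)$ forces $|u|^5\in L^{6/5}(\R^3)$, the Hardy--Littlewood--Sobolev inequality provides a unique $\phi_u\in D^{1,2}(\R^3)$ solving $-\Delta\phi_u=|u|^5$, and the associated $C^1$ energy functional reads
\[
I(u)=\tfrac{1}{2}\!\int_{\R^3}\!\bigl(|\nabla u|^2+V(x)u^2\bigr)-\tfrac{1}{10}\!\int_{\R^3}\!\phi_u|u|^5-\!\int_{\R^3}\!F(u).
\]
Using the scaling $u_t(x):=u(x/t)$ and noting that the critical nonlocal term scales as $t^5$, the identity $\tfrac{d}{dt}I(u_t)\big|_{t=1}=0$ for solutions delivers the Poho\v{z}aev type relation
\[
P(u):=\tfrac{1}{2}\|\nabla u\|_2^2+\tfrac{3}{2}\!\int V u^2+\tfrac{1}{2}\!\int (x,\nabla V)u^2-\tfrac{1}{2}\!\int \phi_u|u|^5-3\!\int F(u)=0,
\]
and I would work on the manifold $\mathcal{M}:=\{u\in H^1(\R^3)\setminus\{0\}:P(u)=0\}$. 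Condition $(V_1)$, together with H\"older's inequality and the Sobolev embedding $\|u\|_6^2\leq S^{-1}\|\nabla u\|_2^2$, yields $|\int(x,\nabla V)u^2|\leq\|\nabla u\|_2^2$, so that $\tfrac{1}{2}\|\nabla u\|_2^2+\tfrac{1}{2}\int(x,\nabla V)u^2\geq 0$. Combined with $(f_1)$--$(f_2)$, a standard fibering analysis shows that for each $u\neq 0$ there exists a unique $t_u>0$ such that $u_{t_u}\in\mathcal{M}$ and $t\mapsto I(u_t)$ attains its maximum at $t_u$. Therefore $\mathcal{M}$ is a natural constraint and $c:=\inf_{\mathcal{M}}I>0$.

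\emph{Step 2 (Comparison with the problem at infinity and the critical level).} Let $I_\infty$ and $c_\infty$ denote the analogues with the constant potential $V_\infty$; since $I_\infty$ is translation invariant, $c_\infty$ is attained by a positive radial ground state $w$. Projecting a translate $w(\cdot-y)$ (with $|y|$ large) onto $\mathcal{M}$ and exploiting the strict inequality $V<V_\infty$ on a set of positive measure from $(V_2)$, I would derive $c<c_\infty$. Next, let $c^*$ denote the critical threshold arising from the concentration loss of compactness caused by the critical nonlocal term (a multiple of $S^{5/2}$ dictated by Aubin--Talenti bubbles $U_\varepsilon$ and the best constant of the nonlocal critical inequality). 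Testing $I$ along the path $t\mapsto(U_\varepsilon)_t$ and exploiting the lower bound $F(t)\geq\mu t^{q+1}$ from $(f_3)$, I would establish $c<c^*$ for all $\mu>0$ when $q\in(3,5)$, and for sufficiently large $\mu$ when $q\in(1,3]$. The delicate balancing between the critical defect and the subcritical gain $\mu\|U_\varepsilon\|_{q+1}^{q+1}$ (whose leading order in $\varepsilon$ changes regime precisely at $q=3$) is the main technical obstacle.

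\emph{Step 3 (Compactness, existence, and positivity).} Let $\{u_n\}\subset\mathcal{M}$ be a minimizing sequence. Since the combination $I(u_n)-\tfrac{1}{3}P(u_n)$ is bounded and, thanks to $(V_1)$, controls a positive multiple of $\|\nabla u_n\|_2^2$ from above, I get boundedness in $H^1(\R^3)$ (the $L^2$-part being handled through $V\geq 0$ and the structure of $P$). Extracting $u_n\rightharpoonup u$, I would rule out the three Lions type failure modes: vanishing is excluded because $\|u_n\|$ is bounded below on $\mathcal{M}$; escape to infinity would force $c\geq c_\infty$, contradicting Step 2; bubble concentration would force $c\geq c^*$, again contradicting Step 2. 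A Brezis--Lieb type decomposition for $\int\phi_{u_n}|u_n|^5$ is needed to make the last two exclusions rigorous. Hence $u\not\equiv 0$, and weak lower semicontinuity gives $u\in\mathcal{M}$ with $I(u)=c$. The natural constraint property forces the Lagrange multiplier to vanish, so $(u,\phi_u)$ solves \eqref{mainequation1}. Finally, since $f\equiv 0$ on $(-\infty,0)$ and $V\geq 0$, testing the first equation against $u^-:=\min(u,0)$ yields $u\geq 0$, and the strong maximum principle upgrades this to $u>0$, delivering the desired positive least energy solution.
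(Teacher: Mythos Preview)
Your approach hinges on two assertions that do not survive the non-constant potential, and this is exactly the obstacle the paper has to work around.

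\textbf{The fibering does not close.} You claim that for each $u\neq 0$ the map $t\mapsto I(u_t)$, $u_t(x)=u(x/t)$, has a unique critical point, and you use this both to project onto $\mathcal M$ and to compare $c$ with $c_\infty$. But with a non-constant $V$ the potential term reads $\int V(x)u_t^2\,dx=t^3\int V(ty)u(y)^2\,dy$, and the Poho\v{z}aev term $\int(x,\nabla V)u_t^2$ behaves similarly. Neither is a monomial in $t$; their dependence on $t$ is governed by the unknown profile of $V$ along rays. Consequently $t\mapsto I(u_t)$ is not a polynomial in $t$ and there is no reason for the critical point to be unique, nor for the maximum to be attained at a single interior point. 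The identity you use to show $\mathcal M$ is a natural constraint and to project translates $w(\cdot-y)$ onto $\mathcal M$ is therefore unavailable. The paper is explicit about this: after proving the constant-potential case (Theorem~\ref{maintheorem2}) via the Jeanjean augmented-functional trick, it states that ``the method to prove Theorem~\ref{maintheorem2} can not be applied because of the effect of $V(x)$'' and switches to an entirely different device, namely Jeanjean's monotonicity trick (Proposition~\ref{proposition}). A one-parameter family $I_{V,\lambda}$ is introduced, a bounded $(PS)_{c_\lambda}$ sequence is obtained for a.e.\ $\lambda$, a global compactness lemma (Lemma~\ref{global}) is proved for bounded Palais--Smale sequences, and only then are the comparisons $c_\lambda<m_\lambda^\infty$ and $c_\lambda<\tfrac{2}{5}\lambda^{-1/4}S^{3/2}$ invoked to recover compactness. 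Passing $\lambda_n\to 1^-$ produces a critical point for $I_V$, and the least-energy solution is finally obtained by minimising over the set $\mathcal S_V$ of \emph{actual critical points}, where the Poho\v{z}aev identity is legitimately available.

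\textbf{A minimizing sequence on $\mathcal M$ is not a $(PS)$ sequence.} Even setting aside the fibering issue, your Step~3 applies a splitting/global-compactness analysis to a minimizing sequence on $\mathcal M$. Such a sequence need not satisfy $I'(u_n)\to 0$, and the decomposition you invoke (ruling out bubbling and escape to infinity via the limit functional $I_\infty$) is established in the paper only for bounded Palais--Smale sequences. Without a $(PS)$ structure you cannot identify the profiles $w_i$ as critical points of $I_\lambda^\infty$, which is what makes the energy comparison $c<c_\infty$ bite. You would need at least an Ekeland-on-$\mathcal M$ step plus a proof that the Lagrange multiplier vanishes, and the latter again leads back to the fibering problem.

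Two smaller points: the critical threshold here is $\tfrac{2}{5}S^{3/2}$, not a multiple of $S^{5/2}$; and the statement that $c_\infty$ is attained by a positive radial ground state is itself a theorem requiring proof (it is Theorem~\ref{maintheorem2} in the paper, proved via a Poho\v{z}aev-augmented $(PS)$ sequence), not a fact one can simply cite.
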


As a direct consequence of Theorem \ref{maintheorem1}, the following result is immediate:
\begin{corollary}\label{corollary}
 Under the assumptions of Theorem \ref{maintheorem1} with $(V_1)$ replaced by
 \begin{enumerate}[$(V_3)$]
   \item   $V(x)\in C(\R^3,\R)$ is weakly differentiable and there exists a constant $A\in(0,\frac{1}{4}]$  such that
\[
|(x,\nabla V)|\leq \frac{A}{|x|^2} \ \  \text{for almost}\ \  x \in\R^3 \backslash\{0\},
\]
where $(\cdot,\cdot)$ is the usual inner product in $\R^3$.
\end{enumerate}
Then the conclusion of Theorem \ref{maintheorem1} still holds.
\end{corollary}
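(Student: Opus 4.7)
The plan is to observe that Corollary \ref{corollary} follows immediately from Theorem \ref{maintheorem1} once we verify that hypothesis $(V_3)$ produces the same $H^1$-bound on $\int_{\R^3}|(x,\nabla V)|u^2\,dx$ that hypothesis $(V_1)$ does. In the proof of Theorem \ref{maintheorem1}, $(V_1)$ enters only through the H\"{o}lder--Sobolev chain
\[
\int_{\R^3}|(x,\nabla V)|u^2\,dx\leq \Bigl(\int_{\R^3}|(x,\nabla V)|^{3/2}\,dx\Bigr)^{2/3}\Bigl(\int_{\R^3}|u|^6\,dx\Bigr)^{1/3}\leq S\cdot S^{-1}\int_{\R^3}|\nabla u|^2\,dx,
\]
which is the only quantitative use of $(V_1)$ beyond the weak differentiability that $(V_3)$ also supplies. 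This inequality will typically be invoked at two critical places in the proof of Theorem \ref{maintheorem1}: to ensure that the Poho\v{z}aev-type functional is positive (hence that the corresponding manifold is non-degenerate and bounded away from the origin), and to bound Cerami sequences uniformly in $H^1(\R^3)$.

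Under $(V_3)$ the identical estimate can be recovered via the classical Hardy inequality in $\R^3$,
\[
\int_{\R^3}\frac{u^2}{|x|^2}\,dx\leq 4\int_{\R^3}|\nabla u|^2\,dx\qquad\text{for all } u\in H^1(\R^3).
\]
Combined with the pointwise bound $|(x,\nabla V(x))|\leq A/|x|^2$ and the constraint $A\leq 1/4$, this yields
\[
\int_{\R^3}|(x,\nabla V)|u^2\,dx\leq A\int_{\R^3}\frac{u^2}{|x|^2}\,dx\leq 4A\int_{\R^3}|\nabla u|^2\,dx\leq \int_{\R^3}|\nabla u|^2\,dx,
\]
which is precisely the bound that $(V_1)$ delivers. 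Substituting this two-line argument at every occurrence of $(V_1)$ in the proof of Theorem \ref{maintheorem1} leaves the remaining steps intact, since the rest of the argument depends on $V$ only through $V_\infty$ and $(V_2)$.

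The main obstacle is therefore not analytical but a matter of bookkeeping: I must confirm that every invocation of $(V_1)$ in the proof of Theorem \ref{maintheorem1} can indeed be rewritten in the form $\int_{\R^3}|(x,\nabla V)|u^2\,dx\leq \|\nabla u\|_2^2$, and does not secretly require the global $L^{3/2}$-integrability of $(x,\nabla V)$ (as would occur, for instance, in a dominated-convergence or compactness-type step), because $A|x|^{-2}$ does not belong to $L^{3/2}(\R^3)$. Since the authors advertise this corollary as a \emph{direct} consequence of Theorem \ref{maintheorem1}, no such refined usage should appear, and the Hardy substitution above will be sufficient to complete the proof.
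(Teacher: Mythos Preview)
Your proposal is correct and matches the paper's own proof essentially verbatim: the paper also observes that $(V_1)$ enters only through the bound $|\overline{\beta}|\leq\alpha$ (equation \eqref{simple124}, used in \eqref{gggggg} and its analogues), and replaces the H\"{o}lder--Sobolev chain by the Hardy inequality $|\overline{\beta}|\leq A\int|u|^2/|x|^2\,dx\leq 4A\int|\nabla u|^2\,dx\leq\alpha$ under $(V_3)$. Your additional remark checking that no other step secretly needs $(x,\nabla V)\in L^{3/2}$ is a sound precaution, and indeed no such use occurs in the paper.
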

\vskip0.4cm
\begin{remark}
 Compared with the results in F. Li, Y. Li and J. Shi \cite{Li}, there are four improvements in our paper: (i) We deal with the case that the potential $V(x)$ is not a constant. (ii) The restriction on the work space which is radially symmetric can be removed. (iii) Our approach to prove the existence of nontrivial solution is different. (iv) The nonlinear term are more general in our paper, and the solution obtained in Theorem \ref{maintheorem1} or Corollary \ref{corollary} can be confirmed as the least energy solution.
 \end{remark}
\vskip0.4cm

 \begin{remark}\label{potential}
On the potential in Theorem \ref{maintheorem1} and Corollary \ref{corollary}, we give some brief explanations:
\begin{enumerate}
\item To the best knowledge of us, the condition $(V_1)$ seems to be used for the first time to study Schr\"{o}dinger-Poisson systems.
  \item The hypothesis $V(x)\geq 0$ in $(V_2)$ can be replaced by: there exists a constant $\overline{C}>0$ such that
   \[
     \overline{C}\triangleq \inf_{u\in H^1(\R^3)\backslash\{0\}}\frac{\int_{\R^3}|\nabla u|^2+V(x)u^2dx}{\int_{\R^3}|u|^2dx}>0,
  \]
which is mainly used to ensure an equivalent norm to the usual norm in $H^1(\R^3)$, please see
 \cite{G. Li,Zhao0}.
  \item There are a great number of potential functions $V(x)$ satisfying the conditions $(V_1)$ and $(V_2)$. For example,
\[
V(x)=V_\infty\bigg(1-\frac{C}{1+|x|^{3}}\bigg).
\]
Similarly the functions
\[
V_1(x)=V_\infty-\frac{A}{1+|x|^2}
\]
verify the conditions $(V_2)$ and $(V_3)$.
\item In our paper, the range of $A$ in $(V_3)$ depends on the sharp constant of Hardy inequality \cite{Kufner}:
\begin{equation}\label{Hardy}
 \int_{\R^3}\frac{|u|^2}{|x|^2}dx\leq 4\int_{\R^3}|\nabla u|^2dx.
\end{equation}
\end{enumerate}
\end{remark}

Now we give our main idea for the proof of Theorem \ref{maintheorem1}. Although, it is easy to verify that the functional $I_{V}$ (see Section 2) possesses a Mountain-pass geometry in the usual way and then a $(PS)$ sequence can be obtained, it is difficult to prove its boundedness because the Ambrosetti-Rabinowitz type condition
($(AR)$ in short)
\vskip0.4cm
\begin{enumerate}[$(AR)$]
  \item \emph{There exists $\gamma>2$ such that $0 < \gamma F(t)\leq f(t)t$ for all $t \neq 0$}
\end{enumerate}
\vskip0.2cm
or the monotonicity assumption
\vskip0.1cm
\begin{enumerate}[$(M)$]
  \item \emph{The map $t \to
\frac{f(t)}{t}$ is positive for $t\neq 0$, strictly decreasing on $(-\infty,0)$ and
strictly increasing on $(0,+\infty)$}
\end{enumerate}
\vskip0.1cm
does not hold. Even a bounded $(PS)$ sequence can be established, the lack of compact Sobolev embedding and the condition $(M)$
lead to great difficulties in proving the functional satisfies the so-called $(PS)$ condition.
To overcome the above difficulties, motivated by \cite{Zhao2}, we use an indirect approach developed by L. Jeanjean \cite{Jeanjean2}. As a consequence, a bounded $(PS)_{c_\la}$ sequence for the functional $I_{V,\la}$ (see \eqref{IVK}) is obtained, we use a similar method presented in \cite{Jeanjean3} to recover the global compactness lemma and then to prove the $(PS)_{c_\la}$ condition. However before we success in showing the $(PS)_{c_\la}$ condition, we have to pull the energy level down below some specific critical level:
\[
c_\la<\frac{2}{5}\la^{-\frac{1}{4}}S^{\frac{3}{2}}
\]
because of the critical nonlocal term.

To apply the global compactness lemma mentioned above, first of all, we need to consider the existence of ground state solutions of the associated ``limit problem" of \eqref{mainequation1}, which is given as
\[
  \left\{%
\begin{array}{ll}
     -\Delta u+V_\infty u- \phi |u|^3u=\lambda f(u), & x\in\R^3, \\
     -\Delta \phi= |u|^5, &  x\in\R^3.\\
\end{array}%
\right.
\]
Since $V_\infty$ and $\lambda$ are constants, we just need to investigate the following equation
\begin{equation}\label{mainequation3}
  \left\{%
\begin{array}{ll}
     -\Delta u+ u- \phi |u|^3u=  f(u), & x\in\R^3, \\
     -\Delta \phi=  |u|^5, &  x\in\R^3,\\
\end{array}%
\right.
\end{equation}
for simplicity.\\

We obtain the following result:
\begin{theorem}\label{maintheorem2}
Assume $(f_1)-(f_3)$ hold, then the system \eqref{mainequation3} possesses a positive least energy solution $(u,\phi)\in H^1(\R^3)\times D^{1,2}(\R^3)$ for any $\mu>0$ with $q\in(3,5)$, or a sufficiently large $\mu>0$ with $q\in(1,3]$.
\end{theorem}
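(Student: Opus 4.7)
The plan is to combine Jeanjean's monotonicity trick \cite{Jeanjean2} with a Poho\v{z}aev-manifold argument and a global compactness decomposition tailored to the critical nonlocal term. First I would reduce the system to a single-variable problem by setting $\phi_u(x):=\frac{1}{4\pi}\int_{\R^3}\frac{|u(y)|^5}{|x-y|}\,dy$, so that solutions of \eqref{mainequation3} correspond to critical points of
\[
I(u)=\frac{1}{2}\int_{\R^3}\bigl(|\nabla u|^2+u^2\bigr)dx-\frac{1}{10}\int_{\R^3}\phi_u|u|^5\,dx-\int_{\R^3}F(u)\,dx
\]
on $H^1(\R^3)$; the nonlocal term is of class $C^1$ by Hardy--Littlewood--Sobolev, while $(f_1)$--$(f_2)$ handle the nonlinear part. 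Following \cite{Jeanjean2} I would then perturb to the family
\[
I_\la(u)=\frac{1}{2}\int_{\R^3}\bigl(|\nabla u|^2+u^2\bigr)dx-\frac{1}{10}\int_{\R^3}\phi_u|u|^5\,dx-\la\int_{\R^3}F(u)\,dx,\qquad \la\in[\tfrac12,1].
\]

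Next I would check, uniformly in $\la\in[\tfrac12,1]$, the Mountain-Pass geometry of $I_\la$: positivity on a small sphere via $(f_1),(f_2)$ and the HLS estimate $\int\phi_u|u|^5\lesssim\|u\|_6^{10}$, together with $I_\la(tu)\to-\infty$ as $t\to+\infty$ coming from the dominant $-\frac{t^{10}}{10}\int\phi_u|u|^5$. This produces an MP-level $c_\la>0$, nonincreasing in $\la$. The crucial (and technically most delicate) step is then the strict upper bound
\[
c_\la<\frac{2}{5}\la^{-\frac{1}{4}}S^{\frac{3}{2}},\qquad \la\in[\tfrac12,1],
\]
which I would prove by inserting into a suitable test path a rescaled and truncated Aubin--Talenti bubble $U_\ep$ concentrated near a point, expanding $\int\phi_{tU_\ep}|tU_\ep|^5$ and $\|U_\ep\|_{q+1}^{q+1}$, and using $F(s)\ge\mu s^{q+1}$ from $(f_3)$ to absorb the error terms. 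The dichotomy in the theorem arises because the $\ep$-order of the gain $\mu\|U_\ep\|_{q+1}^{q+1}$ beats the error terms unconditionally when $q\in(3,5)$, whereas for $q\in(1,3]$ it suffices only if $\mu$ is taken sufficiently large.

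With the strict bound in place, Jeanjean's abstract principle yields a bounded $(PS)_{c_\la}$-sequence $\{u_n^\la\}\subset H^1(\R^3)$ for $I_\la$ for almost every $\la\in[\tfrac12,1]$. I would then establish a Struwe-type global compactness decomposition tailored to the nonlocal critical setting: extracting a weak limit $u_n^\la\rightharpoonup u_\la$, the residue $u_n^\la-u_\la$ splits, up to translations and rescalings, into finitely many ``bubble'' profiles solving the scale-invariant limit problem $-\Delta U=\phi_U|U|^3U$ on $\R^3$, with energies additive up to $o(1)$. Each bubble carries energy at least $\frac{2}{5}\la^{-1/4}S^{3/2}$ (exact value given by a direct Poho\v{z}aev computation on the limit problem), so the strict MP-estimate forbids them all; hence $u_n^\la\to u_\la$ strongly in $H^1(\R^3)$ and $u_\la$ is a nontrivial critical point of $I_\la$ at level $c_\la$.

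Finally, picking $\la_n\uparrow 1$ along the a.e.\ set on which $u_{\la_n}$ exists, I would combine $I_{\la_n}(u_{\la_n})=c_{\la_n}\le c_{1/2}$ with the Poho\v{z}aev identity
\[
\frac{1}{2}\|\nabla u_{\la_n}\|_2^2+\frac{3}{2}\|u_{\la_n}\|_2^2-\frac{1}{2}\int_{\R^3}\phi_{u_{\la_n}}|u_{\la_n}|^5\,dx-3\la_n\int_{\R^3}F(u_{\la_n})\,dx=0
\]
to deduce a uniform $H^1$-bound on $\{u_{\la_n}\}$. Repeating the compactness argument at $\la=1$ (the strict upper bound persists in the limit) yields a nontrivial critical point $u^\ast$ of $I$. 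Testing the equation with $(u^\ast)^-$ shows $u^\ast\ge 0$ since $f$ vanishes on $(-\infty,0)$, and the strong maximum principle gives $u^\ast>0$. The least-energy property is then obtained by working on the Poho\v{z}aev manifold $\mc{M}:=\{u\in H^1(\R^3)\setminus\{0\}:P(u)=0\}$: a standard fibering argument shows $c_1=\inf_{\mc{M}}I$, and that this infimum coincides with the infimum of $I$ over all nontrivial solutions. The principal obstacle throughout is the sharp energy estimate $c_\la<\frac{2}{5}\la^{-1/4}S^{3/2}$: because the loss of compactness is driven by a \emph{nonlocal} critical term, the standard Brezis--Nirenberg-type expansion must be revisited carefully, and this is precisely where $(f_3)$ (together with its quantitative form involving $\mu$) plays the decisive role.
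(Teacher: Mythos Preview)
Your route differs from the paper's, and there are real problems with the implementation.

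\textbf{What the paper does.} For the autonomous problem \eqref{mainequation3} the paper does \emph{not} use the monotonicity trick of \cite{Jeanjean2}. Instead it applies the scaling-map device of \cite{Jeanjean}: one lifts $I$ to $\R\times H^1(\R^3)$ via $\Phi(\tau,v)(x)=v(e^{-\tau}x)$, runs the Mountain-Pass on $I\circ\Phi$, and reads off a $(PS)_c$ sequence $\{u_n\}$ for $I$ that in addition satisfies $P(u_n)\to 0$. The extra Poho\v{z}aev information gives boundedness immediately. Compactness is then handled \emph{not} by a Struwe splitting, but by the translation invariance of $I$: if $\{u_n\}$ vanishes in the Lions sense one reaches the forbidden level $\tfrac{2}{5}S^{3/2}$, so after a translation one has a nontrivial weak limit $u$; Fatou plus $P(u)=0$ yields $I(u)=c=m$.

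\textbf{Where your scheme breaks.} First, your $A/B$ split for \cite{Jeanjean2} fails its hypothesis: with $A(u)=\tfrac12\|u\|^2-\tfrac{1}{10}\int\phi_u|u|^5$ and $B(u)=\int F(u)$, neither $A(u)\to+\infty$ nor $B(u)\to+\infty$ along every sequence with $\|u\|\to\infty$ (take concentrating bubbles). The paper, when it does use \cite{Jeanjean2} for the non-constant potential theorem, puts $\lambda$ on \emph{both} the nonlocal term and $F$, so that $A(u)=\tfrac12\|u\|^2$ is coercive. Second, and consistently with this, your threshold $c_\la<\tfrac{2}{5}\la^{-1/4}S^{3/2}$ is the one corresponding to $\la$ in front of the nonlocal term; for your $I_\la$ the pure-critical level is $\tfrac{2}{5}S^{3/2}$, independent of $\la$. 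Third, your compactness picture is incomplete: on $\R^3$ with a positive mass term the dominant failure is escape by \emph{translation}, producing profiles that solve the original equation $-\Delta w+w=\phi_w|w|^3w+\la f(w)$, not the zero-mass scale-invariant problem $-\Delta U=\phi_U|U|^3U$ you describe. Ruling out concentration via the energy bound leaves these translation profiles untouched; you then need either the translation invariance of the autonomous functional (which is exactly what the paper exploits) or a comparison $c_\la<m_\la^\infty$ with a genuine limit problem, which collapses here since the limit problem is the problem itself.

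In short: either fix the $\lambda$-placement and redo the threshold, and then recognize that for constant $V$ the ``limit profiles'' are solutions of the same equation (so translation invariance closes the argument), or---more simply---follow the paper and use the scaling-map trick of \cite{Jeanjean} to manufacture $P(u_n)\to 0$ directly, bypassing the monotonicity machinery entirely.
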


\begin{remark}\label{label}
To prove Theorem \ref{maintheorem2}, we adopt an approach proposed by L. Jeanjean \cite{Jeanjean} to construct
a $(PS)$ sequence $\{u_n\}$ which satisfies asymptotically the Poho\u{z}aev
type identity:
\[
P(u_n)\to 0,
\]
where $P(u) = 0$ is the Poho\u{z}aev identity related to the system \eqref{mainequation3} and $c$ is a
Mountain-pass energy given in \eqref{Mountainpass1} below.  The approach used in \cite{Jeanjean}, which deals with a local functional needs to be improved because of the effect of the nonlocal term.
\end{remark}

The paper is organized as follows. In Section 2, we provide several lemmas, which are crucial in proving
our main results. In Section 3, firstly the proof of Theorem \ref{maintheorem2} is obtained, then we complete the proof of Theorem \ref{maintheorem1}.
\\\\
\textbf{Notation.} Throughout this paper we shall denote by $C$ and $C_i$ ($i=1, 2,\cdots$) for various positive constants whose exact value may change from lines to lines but are not essential to the analysis of problem. $L^p(\R^3)$ $(1\leq p\leq+\infty)$ is the usual Lebesgue space with the standard norm $|u|_p$. For any Lebesgue measurable set $E\subset \R^3$, $|E|$ means the Lebesgue measure of the set $E$.
We use $``\to"$ and $``\rightharpoonup"$ to denote the strong and weak convergence in the related function space, respectively. The symbol $``\hookrightarrow"$ means a function space is continuously imbedding into another function space.
For any $\rho>0$ and any $x\in \R^3$, $B_\rho(x)$ denotes the ball of radius $\rho$ centered at $x$, that is, $B_\rho(x):=\{y\in \R^3:|y-x|<\rho\}$.

Let $(X,\|\cdot\|)$ be a Banach space with its dual space $(X^{-1},\|\cdot\|_{*})$, and $\Psi$ be its functional on $X$. The Palais-Smale sequence at level $c\in\R$ ($(PS)_c$ sequence in short) corresponding to $\Psi$ assumes that $\Psi(x_n)\to c$ and $\Psi^{\prime}(x_n)\to 0$ as $n\to\infty$, where $\{x_n\}\subset X$. If for any $(PS)_c$ sequence $\{x_n\}$ in $X$, there exists a subsequence $\{x_{n_{k}}\}$ such that $x_{n_{k}}\to x_0$ in $X$ for some $x_0\in X$, then we say that the functional $\Psi$ satisfies the so-called $(PS)_c$ condition.

\section{Variational settings and preliminaries}
 In this section, we will give some lemmas which are useful for the main results. To solve the system \eqref{mainequation1}, we introduce some function spaces. Throughout the paper, we consider the Hilbert space $H^1(\R^3)$ with the inner product and the norm as follows
$$
(u,v)=\int_{\R^3}\nabla u\nabla v+V(x)uvdx\ \  \text{and} \ \  \|u\|=\bigg(\int_{\R^3}|\nabla u|^2+ V(x)u^2dx\bigg)^{\frac{1}{2}},  \ \   \forall u,v\in H^1(\R^3)
$$
which are equivalent to the usual inner product and the norm in $H^1(\R^3)$ because of the assumption $(V_2)$.
The space
\[
D^{1,2}(\R^3)=\big\{u\in L^6(\R^3): \nabla u \in L^2(\R^3)\big\}
\]
equips with its usual inner product and norm
$$
(u,v)_{D^{1,2}(\R^3)}=\int_{\R^3}\nabla u\nabla vdx\ \ \text{and} \ \
\|u\|_{D^{1,2}(\R^3)}=\bigg(\int_{\R^3}|\nabla u|^2dx\bigg)^{\frac{1}{2}},
$$
respectively. The positive constant $S$ denotes the best Sobolev constant:
\begin{equation}\label{Sobolev}
  S:=\inf_{u\in D^{1,2}(\R^3)\setminus \{0\}}\frac{\int_{\R^3}|\nabla u|^2dx}{\big(\int_{\R^3}|u|^6dx\big)^{\frac{1}{3}}}.
\end{equation}

In the following, one can use the Lax-Milgram theorem, for every $u\in H^1(\R^3)$, there exists a unique $\phi_{u}\in D^{1,2}(\R^3)$ such that
\begin{equation}\label{Lax}
  -\Delta \phi= |u|^5
\end{equation}
 and $\phi_{u}$ can be written as
\begin{equation}\label{Riesz}
  \phi_{u}(x)=\frac{1}{4\pi}\int_{\R^3}\frac{ |u(y)|^5}{|x-y|}dy.
\end{equation}
Substituting \eqref{Riesz} into \eqref{mainequation1}, we get a single elliptic equation with a nonlocal term:
\[
-\Delta u+V(x)u- \phi_u|u|^3u=f(u)
\]
whose corresponding functional $I_{V }:H^1(\R^3)\to\R$ is defined by
\begin{equation}\label{functional}
  I_{V }(u)=\frac{1}{2}\|u\|^2-\frac{1}{10}\int_{\R^3} \phi_u|u|^5dx-\int_{\R^3}F(u)dx.
\end{equation}
We mention here that the idea of this reduction method was proposed by Benci and Fortunato \cite{Benci1} and it is a basic strategy for studying Schr\"{o}dinger-Poisson system today.

For simplicity, the conditions $(f_1)-(f_3)$ always hold true though out this paper and we don't assume them any longer unless specially needed. Thus for any $\epsilon>0$, there exists $C_\epsilon>0$ such that
\begin{equation}\label{growth}
  |uf(u)|\leq \epsilon|u|^2+C_\epsilon |u|^6\ \ \text{and}\ \ |F(u)|\leq \epsilon|u|^2+C_\epsilon |u|^6.
\end{equation}
To know more about the solution $\phi$ of the Poisson equation in \eqref{mainequation1} which can leads to a critical nonlocal term, we have the following key lemma:
\begin{lemma}\label{phi}
For every $u\in H^1(\R^3)$, we have the following conclusions:
\begin{enumerate}[$(1)$]
  \item $\phi_u(x)\geq 0$ for every $x\in\R^3$;
  \end{enumerate}
  \begin{enumerate}[$(2)$]
  \item $\|\phi_u\|_{D^{1,2}(\R^3)}^2=\int_{\R^3} \phi_u|u|^5dx$;
  \end{enumerate}
  \begin{enumerate}[$(3)$]
  \item for any $t>0$, $\phi_{tu}=t^5\phi_{u}$;
  \end{enumerate}
  \begin{enumerate}[$(4)$]
  \item if $u_n\rightharpoonup u$ in $H^1(\R^3)$, then $\phi_{u_n}\rightharpoonup \phi_u$ in $D^{1,2}(\R^3)$.
\end{enumerate}
\end{lemma}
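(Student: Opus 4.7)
The plan is to treat parts (1)--(3) as essentially immediate consequences of the integral representation \eqref{Riesz} and the defining Poisson equation \eqref{Lax}, and to spend most of the effort on part (4).

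For (1), since the kernel $\frac{1}{4\pi|x-y|}$ is nonnegative and $|u(y)|^5\geq 0$, the integral formula \eqref{Riesz} immediately gives $\phi_u(x)\geq 0$ pointwise. For (2), I would test the equation $-\Delta\phi_u=|u|^5$ against $\phi_u$ itself in $D^{1,2}(\R^3)$, which is justified since $\phi_u\in D^{1,2}(\R^3)$ and $|u|^5\in L^{6/5}(\R^3)$ by the Sobolev embedding $H^1\hookrightarrow L^6$; integration by parts yields $\|\phi_u\|_{D^{1,2}}^2=\int_{\R^3}\phi_u|u|^5\wrt x$. For (3), substituting $tu$ into \eqref{Riesz} gives $\phi_{tu}(x)=\frac{1}{4\pi}\int_{\R^3}\frac{|t u(y)|^5}{|x-y|}\wrt y=t^5\phi_u(x)$; alternatively, $t^5\phi_u$ clearly solves $-\Delta w=|tu|^5$ and uniqueness from Lax--Milgram applies.

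For (4), the main task is to identify the weak limit of $\{\phi_{u_n}\}$ and establish boundedness. First I would use part (2) together with Hölder's inequality: $\|\phi_{u_n}\|_{D^{1,2}}^2=\int_{\R^3}\phi_{u_n}|u_n|^5\wrt x\leq |\phi_{u_n}|_6\,||u_n|^5|_{6/5}\leq C\|\phi_{u_n}\|_{D^{1,2}}|u_n|_6^5$, so $\|\phi_{u_n}\|_{D^{1,2}}\leq C|u_n|_6^5$, which is uniformly bounded since $\{u_n\}$ is bounded in $H^1(\R^3)\hookrightarrow L^6(\R^3)$. Hence, up to a subsequence, $\phi_{u_n}\rightharpoonup\psi$ in $D^{1,2}(\R^3)$ for some $\psi$.

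It remains to identify $\psi=\phi_u$; once this is done, the usual subsequence-uniqueness argument yields the full weak convergence. For any test function $v\in C_c^\infty(\R^3)$, the equation $-\Delta\phi_{u_n}=|u_n|^5$ gives $\int_{\R^3}\nabla\phi_{u_n}\cdot\nabla v\wrt x=\int_{\R^3}|u_n|^5 v\wrt x$. The left-hand side converges to $\int_{\R^3}\nabla\psi\cdot\nabla v\wrt x$ by the weak convergence in $D^{1,2}$. For the right-hand side, weak convergence $u_n\rightharpoonup u$ in $H^1$ gives $u_n\to u$ a.e.\ (after passing to a subsequence) and $\{|u_n|^5\}$ is bounded in $L^{6/5}(\R^3)$, so $|u_n|^5\rightharpoonup|u|^5$ in $L^{6/5}(\R^3)$; since $v\in L^6(\R^3)$ this forces $\int|u_n|^5 v\wrt x\to\int|u|^5 v\wrt x$. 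Thus $-\Delta\psi=|u|^5$ in the distributional sense, and uniqueness from Lax--Milgram identifies $\psi=\phi_u$. The main delicate point is the weak convergence $|u_n|^5\rightharpoonup|u|^5$ in $L^{6/5}$, which I expect to be the only non-trivial step and which follows from a.e.\ convergence combined with the uniform $L^{6/5}$ bound via a standard convergence lemma.
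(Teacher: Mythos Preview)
Your proposal is correct and rests on the same key ingredient as the paper's proof, namely the weak convergence $|u_n|^5\rightharpoonup|u|^5$ in $L^{6/5}(\R^3)$. For part~(4) the paper proceeds slightly more directly: rather than first proving boundedness, extracting a subsequence, and identifying the limit, it simply observes that for every $v\in D^{1,2}(\R^3)\hookrightarrow L^6(\R^3)$ one has $(\phi_{u_n},v)_{D^{1,2}}=\int_{\R^3}|u_n|^5v\,dx\to\int_{\R^3}|u|^5v\,dx=(\phi_u,v)_{D^{1,2}}$, which is exactly the definition of weak convergence and avoids any subsequence argument.
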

\begin{proof}
As a direct consequence of \eqref{Lax} and \eqref{Riesz}, one can derive $(1)$, $(2)$ and $(3)$ at once.

For any $v\in D^{1,2}(\R^3)$, then $v\in L^6(\R^3)$ by \eqref{Sobolev}. Since $u_n\rightharpoonup u$ in $H^1(\R^3)$, then $u_n\rightharpoonup u$ in $L^6(\R^3)$ and $|u_n|^5\rightharpoonup |u|^5$ in $L^\frac{6}{5}(\R^3)$. Therefore
\[
(\phi_{u_n},v)_{D^{1,2}(\R^3)}=\int_{\R^3} |u_n|^5vdx\to\int_{\R^3} |u|^5vdx=(\phi_{u},v)_{D^{1,2}(\R^3)}
\]
which implies that $(4)$ is true.
\end{proof}
Furthermore, by $(2)$ of Lemma \ref{phi}, H\"{o}lder's inequality and \eqref{Sobolev}, one has
\[
\|\phi_{u}\|^2_{D^{1,2}}=\int_{\R^3} \phi_{u}|u|^5dx\leq  |\phi_{u}|_6|u|^5_6\leq  S^{-\frac{1}{2}}\|\phi_{u}\|_{D^{1,2}}|u|^5_6
\]
which implies that
\begin{equation}\label{Sobolev0}
  \|\phi_{u}\|_{D^{1,2}}\leq  S^{-\frac{1}{2}}|u|^5_6
\end{equation}
 and
\begin{equation}\label{Sobolev2}
  \int_{\R^3} \phi_{u}|u|^5dx\leq  S^{-6}\|u\|^{10}.
\end{equation}

From \eqref{growth} and \eqref{Sobolev2} we have that the functional $I_{V }$ given by \eqref{functional} is well-defined on $H^1(\R^3)$ and is of $C^1(H^1(\R^3),\R)$ class (see \cite{Willem}), and for any $v\in H^1(\R^3)$ one has
\[
\langle I^{\prime}_{V }(u),v\rangle=\int_{\R^3}\nabla u\nabla v+V(x)uvdx-\int_{\R^3} \phi_u|u|^3uvdx- \int_{\R^3}f(u)vdx.
\]
It is standard to verify that a critical point $u\in H^1(\R^3)$ of the functional $I_{V }$ corresponds to a
weak solution $(u, \phi_u)\in H^1(\R^3)\times D^{1,2}(\R^3)$ of \eqref{mainequation1}. In other words, if we can seek a critical point of the functional $I_{V }$, then the system \eqref{mainequation1} is solvable. In the following, we call $(u, \phi_u)$ is a positive solution of \eqref{mainequation1} if $u$ is a positive critical of the functional $I_{V }$. And $(u, \phi_u)$ is a least energy solution of \eqref{mainequation1} if the critical point $u$ of the functional $I_{V }$ verifies
\[
I_{V }(u)=\min_{v\in \mathcal{S}_{V }}I_{V }(v),
\]
where $\mathcal{S}_{V }:=\big\{u\in H^1(\R^3)\backslash\{0\}:I_{V }^{\prime}(u)=0\big\}$.

Motivated by the well-known Br\'{e}zis-Lieb lemma \cite{Brezis2}, we have the following important lemma to prove the convergence of Schr\"{o}dinger-Poisson system \eqref{mainequation1} involving a critical nonlocal term.
\begin{lemma}\label{weaklemma}
Let $r \geq 1$ and $\Omega$ be an open subset of $\R^N$. Suppose that $u_n\rightharpoonup u$
in $L^r(\Omega)$, and $u_n\to u$ $a.e.$ in $\Omega$ as $n\to\infty$, then
\[
|u_n|^p-|u_n-u|^p-|u|^p\to 0\ \ \text{in}\ \ L^{\frac{r}{p}}(\Omega)
\]
as $n\to\infty$ for any $p\in[1, r]$.
\end{lemma}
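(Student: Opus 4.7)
\textbf{Proof proposal for Lemma \ref{weaklemma}.} The plan is to follow the now-standard refinement of the Brézis--Lieb splitting argument, upgraded from $L^1$ to $L^{r/p}$. The hypothesis $p\in[1,r]$ is used twice: it makes $L^{r/p}$ a genuine Banach space (so $r/p\ge 1$), and it makes $|u|^{p}$ lie in $L^{r/p}(\Omega)$ since $u\in L^{r}(\Omega)$ by weak convergence.

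First I would establish the elementary pointwise inequality
\[
\bigl||a+b|^{p}-|a|^{p}\bigr|\le \epsilon|a|^{p}+C_{\epsilon}|b|^{p},\qquad a,b\in\R,\ \epsilon>0,
\]
valid for every $p\ge 1$. This follows from the mean-value theorem (giving $||a+b|^{p}-|a|^{p}|\le p(|a|+|b|)^{p-1}|b|$) combined with Young's inequality $|a|^{p-1}|b|\le \delta|a|^{p}+C_{\delta}|b|^{p}$. Applying this with $a=u_{n}-u$ and $b=u$, and writing $v_{n}=u_{n}-u$,
\[
g_{n}(x):=\Bigl||u_{n}|^{p}-|v_{n}|^{p}-|u|^{p}\Bigr|\le \epsilon\,|v_{n}|^{p}+(C_{\epsilon}+1)\,|u|^{p}.
\]

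Next I would introduce the truncation
\[
W_{n,\epsilon}:=\bigl(g_{n}-\epsilon|v_{n}|^{p}\bigr)^{+},
\]
so that $0\le W_{n,\epsilon}\le (C_{\epsilon}+1)|u|^{p}$ pointwise, with right-hand side independent of $n$. Because $u_{n}\to u$ a.e., we have $g_{n}\to 0$ a.e., hence $W_{n,\epsilon}\to 0$ a.e. Since $r/p\ge 1$ and $u\in L^{r}(\Omega)$, the function $\bigl((C_{\epsilon}+1)|u|^{p}\bigr)^{r/p}=(C_{\epsilon}+1)^{r/p}|u|^{r}$ is integrable, so Lebesgue's dominated convergence theorem in $L^{r/p}(\Omega)$ gives $W_{n,\epsilon}\to 0$ in $L^{r/p}(\Omega)$ for each fixed $\epsilon>0$.

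To finish, note that $g_{n}\le W_{n,\epsilon}+\epsilon|v_{n}|^{p}$, and the weak convergence $u_{n}\rightharpoonup u$ in $L^{r}(\Omega)$ yields a uniform bound $\|v_{n}\|_{L^{r}(\Omega)}\le M$. Raising to the $(r/p)$-th power, using $(x+y)^{r/p}\le 2^{r/p-1}(x^{r/p}+y^{r/p})$, and integrating,
\[
\|g_{n}\|_{L^{r/p}(\Omega)}^{r/p}\le 2^{r/p-1}\bigl(\|W_{n,\epsilon}\|_{L^{r/p}(\Omega)}^{r/p}+\epsilon^{r/p}M^{r}\bigr).
\]
Taking $\limsup_{n\to\infty}$ first and then $\epsilon\to 0^{+}$ gives the desired conclusion. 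The only potentially delicate point is the domination step, but it is immediate once one checks $p\le r$; so I do not expect any real obstacle, and the proof should be short and self-contained.
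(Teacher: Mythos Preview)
Your argument is correct and is precisely the standard Br\'{e}zis--Lieb refinement one would expect here. The paper does not actually prove this lemma: it writes ``The proof is standard, so we omit it and the reader can refer in \cite[Lemma 2.2]{Li} for the detail proof,'' so there is nothing substantive to compare. One minor stylistic remark: since $r/p\ge 1$, you could replace the convexity inequality $(x+y)^{r/p}\le 2^{r/p-1}(x^{r/p}+y^{r/p})$ by the triangle inequality in $L^{r/p}(\Omega)$ directly, obtaining $\|g_{n}\|_{L^{r/p}}\le \|W_{n,\epsilon}\|_{L^{r/p}}+\epsilon\|v_{n}\|_{L^{r}}^{p}$, which is marginally cleaner; but what you wrote is equally valid.
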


\begin{proof}
The proof is standard, so we omit it and the reader can refer in \cite[Lemma 2.2]{Li} for the detail proof.
\end{proof}

\begin{lemma}
If $u_n\rightharpoonup u$ in $H^1(\R^3)$, then going to a subsequence if necessary, we derive
\begin{equation}\label{weak1}
  |u_n|^5-|u_n-u|^5-|u|^5\to 0\ \ \text{in}\ \ L^{\frac{6}{5}}(\R^3),
\end{equation}
\begin{equation}\label{weak2}
  \phi_{u_n}-\phi_{u_n-u}-\phi_{u}\to 0\ \ \text{in}\ \ D^{1,2}(\R^3),
 \end{equation}
\begin{equation}\label{weak3}
\int_{\R^3} \phi_{u_n}|u_n|^5dx-\int_{\R^3} \phi_{u_n-u}|u_n-u|^5dx-\int_{\R^3} \phi_{u}|u|^5dx\to 0,
\end{equation}
and
\begin{equation}\label{weak4}
  \int_{\R^3} \phi_{u_n}|u_n|^3u_n\varphi dx-\int_{\R^3} \phi_{u}|u|^3u\varphi dx\to 0
\end{equation}
for any $\varphi \in C^\infty_0(\R^3)$.
\end{lemma}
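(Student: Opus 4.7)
The plan is to verify \eqref{weak1}--\eqref{weak4} in the order given, since each later assertion is built from the earlier ones. First, by Rellich-Kondrachov applied on a sequence of balls $B_k$ together with a diagonal extraction, I may pass to a subsequence so that $u_n\to u$ almost everywhere on $\R^3$; Sobolev embedding then gives $u_n\rightharpoonup u$ in $L^6(\R^3)$. Assertion \eqref{weak1} is the direct specialization of Lemma \ref{weaklemma} with $r=6$ and $p=5$.

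The key observation for \eqref{weak2} is the linearity of the Poisson equation: the function $\psi_n:=\phi_{u_n}-\phi_{u_n-u}-\phi_u$ satisfies
\[
-\Delta\psi_n=|u_n|^5-|u_n-u|^5-|u|^5
\]
in $\mathcal{D}'(\R^3)$. Testing against $\psi_n$ itself and applying H\"{o}lder and Sobolev exactly as in the derivation of \eqref{Sobolev0} yields
\[
\|\psi_n\|_{D^{1,2}}\leq S^{-1/2}\bigl\||u_n|^5-|u_n-u|^5-|u|^5\bigr\|_{6/5},
\]
so \eqref{weak2} follows immediately from \eqref{weak1}.

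For \eqref{weak3}, using $\int_{\R^3}\phi_v|v|^5\,dx=\|\phi_v\|_{D^{1,2}}^2$ from $(2)$ of Lemma \ref{phi}, the claim reduces to
\[
\|\phi_{u_n}\|_{D^{1,2}}^2-\|\phi_{u_n-u}\|_{D^{1,2}}^2-\|\phi_u\|_{D^{1,2}}^2\to 0.
\]
By $(4)$ of Lemma \ref{phi}, $\phi_{u_n}\rightharpoonup\phi_u$ in $D^{1,2}(\R^3)$, so the standard Hilbert-space expansion produces $\|\phi_{u_n}-\phi_u\|_{D^{1,2}}^2=\|\phi_{u_n}\|_{D^{1,2}}^2-\|\phi_u\|_{D^{1,2}}^2+o(1)$. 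Combining this with \eqref{weak2}, which asserts $\phi_{u_n-u}=\phi_{u_n}-\phi_u+o(1)$ in $D^{1,2}$, and with the uniform bound on $\|\phi_{u_n}\|_{D^{1,2}}$ provided by \eqref{Sobolev0} (recall $\{u_n\}$ is bounded in $H^1$), the cross terms involving the remainder are of order $o(1)$, and \eqref{weak3} follows.

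For \eqref{weak4}, I would split
\[
\int_{\R^3}\phi_{u_n}|u_n|^3u_n\varphi\,dx-\int_{\R^3}\phi_u|u|^3u\varphi\,dx=\int_{\R^3}(\phi_{u_n}-\phi_u)|u_n|^3u_n\varphi\,dx+\int_{\R^3}\phi_u\bigl(|u_n|^3u_n-|u|^3u\bigr)\varphi\,dx
\]
and set $K:=\supp\varphi$. The compact embedding $D^{1,2}(\R^3)\hookrightarrow L^3(K)$ yields $\phi_{u_n}\to\phi_u$ in $L^3(K)$, while $\{|u_n|^3u_n\varphi\}$ is bounded in $L^{3/2}(K)$ (since $\{u_n\}$ is bounded in $L^6(K)$); H\"{o}lder then kills the first integral. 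For the second, the almost-everywhere convergence $u_n\to u$ together with $L^{3/2}(K)$-boundedness of $|u_n|^3u_n$ gives $|u_n|^3u_n\rightharpoonup|u|^3u$ in $L^{3/2}(K)$, against which the test function $\phi_u\varphi\in L^3(K)$ pairs to zero in the limit. The main technical point of the whole proof is the Brezis-Lieb type cancellation in \eqref{weak3}: once one recognizes that $\|\phi_{u_n-u}\|_{D^{1,2}}$ agrees with $\|\phi_{u_n}-\phi_u\|_{D^{1,2}}$ up to $o(1)$, the remaining bookkeeping is routine.
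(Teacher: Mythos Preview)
Your proof is correct. For \eqref{weak1} and \eqref{weak2} it coincides with the paper's argument. For \eqref{weak3} and \eqref{weak4} you take a genuinely different route. The paper proves \eqref{weak3} by an explicit five-term algebraic decomposition $A_1-A_2+A_3+A_4+A_5$, estimating each piece separately via H\"older together with \eqref{weak1}, \eqref{weak2}, and the weak convergences $\phi_{u_n}\rightharpoonup\phi_u$ in $L^6$ and $|u_n|^5\rightharpoonup|u|^5$ in $L^{6/5}$; you instead invoke the identity $\int\phi_v|v|^5=\|\phi_v\|_{D^{1,2}}^2$ to reduce everything to a Br\'ezis--Lieb statement for Hilbert-space norms in $D^{1,2}$, which is more conceptual and avoids the bookkeeping. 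For \eqref{weak4}, the paper's splitting is the complementary one, $\int(\phi_{u_n}-\phi_u)|u|^3u\varphi+\int\phi_{u_n}(|u_n|^3u_n-|u|^3u)\varphi$, and it handles the second term using only boundedness of $\phi_{u_n}$ in $L^6$ combined with $|u_n|^3u_n\to|u|^3u$ in $L^{6/5}_{\mathrm{loc}}$; your version relies instead on the local compact embedding $D^{1,2}(\R^3)\hookrightarrow L^3(K)$ and weak convergence of $|u_n|^3u_n$ in $L^{3/2}(K)$. Both routes are sound; the paper stays within the $L^6$--$L^{6/5}$ duality already set up, while yours trades that for a local compactness step and a cleaner Hilbert-space argument in \eqref{weak3}.
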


\begin{proof}
Since $u_n\rightharpoonup u$ in $H^1(\R^3)$, then $u_n\rightharpoonup u$ in $L^6(\R^3)$. And $u_n\to u$ $a.e.$ in $\R^3$ because $u_n\to u$ in $L_{\text{loc}}^s(\R^3)$ with $1\leq s<6$ in the sense of a subsequence.
If we take $r=6$ and $p=5$ in Lemma \ref{weaklemma}, one has \eqref{weak1} immediately.

It follows from $(2)$ of Lemma \eqref{phi} and H\"{o}lder's inequality that
\begin{eqnarray*}
 \big|(\phi_{u_n}-\phi_{u_n-u}-\phi_{u},w)_{D^{1,2}(\R^3)}\big| &=& \big| \int_{\R^3}\nabla(\phi_{u_n}-\phi_{u_n-u}-\phi_{u})\nabla wdx \big|\\
   &=&  \big|\int_{\R^3} (|u_n|^5-|u_n-u|^5-|u|^5) wdx \big|\\
    &\leq&  |w|_6 \big|(|u_n|^5-|u_n-u|^5-|u|^5)\big|_{\frac{6}{5}},
\end{eqnarray*}
which implies that
\begin{eqnarray*}
 && \sup_{w\in D^{1,2}(\R^3),\ \ \|w\|_{D^{1,2}(\R^3)}=1}\big|(\phi_{u_n}-\phi_{u_n-u}-\phi_{u},w)_{D^{1,2}(\R^3)}\big| \\
   &\leq&  \big|(|u_n|^5-|u_n-u|^5-|u|^5)\big|_{\frac{6}{5}}\stackrel{\mathrm{\eqref{weak1}}}{\to}0,
  \end{eqnarray*}
hence \eqref{weak2} holds.

Using \eqref{weak2}, one has $\phi_{u_n}-\phi_{u_n-u}-\phi_{u}\to 0$ in $L^6(\R^3)$. Since $\{u_n\}$ is bounded in $L^6(\R^3)$, then by H\"{o}lder's inequality,
\begin{eqnarray*}
|A_1| &:=& \big|\int_{\R^3} \big(\phi_{u_n}-\phi_{u_n-u}-\phi_{u}\big)|u_n|^5dx\big| \\
   &\leq&  |u_n|_6^5 \big|\phi_{u_n}-\phi_{u_n-u}-\phi_{u}\big|_6\to 0.
\end{eqnarray*}
Similarly, one can deduce that
\[
A_2:=\int_{\R^3} \big(\phi_{u_n}-\phi_{u_n-u}-\phi_{u}\big)|u|^5dx\to 0.
\]
In view of \eqref{Sobolev0}, $\{\phi_{u_n-u}\}$ is bounded in $L^6(\R^3)$, then using H\"{o}lder's inequality again,
 \begin{eqnarray*}
 |A_3|  &:=&\big|\int_{\R^3} \phi_{u_n-u}\big(|u_n|^5-|u_n-u|^5-|u|^5\big)dx\big|  \\
   &\leq&  |\phi_{u_n-u}|_6 \big|(|u_n|^5-|u_n-u|^5-|u|^5)\big|_{\frac{6}{5}} \stackrel{\mathrm{\eqref{weak1}}}{\to}0.
\end{eqnarray*}
As $u_n\rightharpoonup u$ in $H^1(\R^3)$, then one has $\phi_{u_n}\rightharpoonup \phi_{u}$ in $D^{1,2}(\R^3)$ by $(4)$ of Lemma \ref{phi} and thus $\phi_{u_n}\rightharpoonup \phi_{u}$ in $L^{6}(\R^3)$. Clearly, $ |u|^5\in L^{\frac{6}{5}}(\R^3)$, thus
\[
A_4:=\int_{\R^3} (\phi_{u_n}-\phi_{u})|u|^5dx\to 0.
\]
By $u_n\rightharpoonup u$ in $H^1(\R^3)$, one has $|u_n|^5\rightharpoonup |u|^5$ in $L^\frac{6}{5}(\R^3)$. Since $ \phi_u\in L^{6}(\R^3)$, then
\[
A_5:=\int_{\R^3} \phi_{u}(|u_n|^5-|u|^5)dx\to 0.
\]
Consequently,
\begin{eqnarray*}
 && \int_{\R^3} \phi_{u_n}|u_n|^5dx-\int_{\R^3} \phi_{u_n-u}|u_n-u|^5dx-\int_{\R^3} \phi_{u}|u|^5dx  \\
    &=& A_1-A_2+A_3+ A_4+A_5\\
    &\to& 0,
\end{eqnarray*}
which shows that \eqref{weak3} is true.

Since $u_n\rightharpoonup u$ in $H^1(\R^3)$, then one can deduce again that $\phi_{u_n}\rightharpoonup \phi_u$ in $L^6(\R^3)$. By $ |u|^3u\varphi\in L^{\frac{6}{5}}(\R^3)$ because $\varphi\in C^\infty_0(\R^3)$, one has
\[
 \int_{\R^3} \phi_{u_n}|u|^3u \varphi dx-\int_{\R^3} \phi_{u}|u|^3u\varphi dx\to 0.
\]
On the other hand, by means of H\"{o}lder's inequality and $\{\phi_{u_n}\}$ is bounded in $L^6(\R^3)$,
\begin{eqnarray*}
&&\bigg|\int_{\R^3} \phi_{u_n}|u_n|^3u_n\varphi dx-\int_{\R^3} \phi_{u_n}|u|^3u\varphi dx\bigg|\\
   &\leq &  \int_{\supp \varphi}|\phi_{u_n}||\varphi|\big||u_n|^3u_n-|u|^3u\big|dx                                                             \\
   &\leq &  |\phi_{u_n}|_{6}|\varphi|_{\infty}\bigg(\int_{\supp \varphi}\big||u_n|^3u_n-|u|^3u\big|^{\frac{6}{5}}dx\bigg)^{\frac{5}{6}}\to 0,
  \end{eqnarray*}
where we have used $u_n\to u$ in $L_{\text{loc}}^s(\R^3)$ with $1\leq s<6$ in the sense of a subsequence.
As a consequence of the above two facts, one has
\[
\int_{\R^3} \phi_{u_n}|u_n|^3 u_n\varphi dx-\int_{\R^3} \phi_{u}|u|^3 u\varphi dx
   \to 0.
\]
The proof is complete.
\end{proof}

In this paper, the Poho\u{z}aev identity (see \cite{Berestycki,dAprile2} for example) will paly an vital role.
Here we prove a more generalized Poho\u{z}aev identity which can be seen as an partial extension to that of in \cite{Li}.

\begin{lemma}\label{Pohozaev}
(Poho\u{z}aev identity) Assume $V$ and $f$ satisfy $(V_1)-(V_2)$ and $(f_1)-(f_3)$, respectively. Let $(u,\phi_u)\in H^1(\R^3)\times D^{1,2}(\R^3)$ be a weak solution for \eqref{mainequation1}, then we have the following Poho\u{z}aev identity:
\begin{align}\label{Pohozaev1}
\nonumber   P_{V}(u)&\triangleq \frac{1}{2}\int_{\R^3}|\nabla u|^2dx+\frac{3}{2}\int_{\R^3}V(x)|u|^2dx+\frac{1}{2}\int_{\R^3}(x,\nabla V)|u|^2dx-\frac{1}{2}\int_{\R^3} \phi_u|u|^5dx  \\
&\ \ \ \  -3\int_{\R^3}F(u)dx\equiv 0.
\end{align}
In particular if $V(x) \equiv 1$, the above Poho\u{z}aev identity can be rewritten as follows:
\begin{equation}\label{Pohozaev2}
P(u)\triangleq\frac{1}{2}\int_{\R^3}|\nabla u|^2dx+\frac{3}{2}\int_{\R^3}|u|^2dx-\frac{1}{2}\int_{\R^3} \phi_u|u|^5dx -3\int_{\R^3}F(u)dx\equiv 0.
\end{equation}
\end{lemma}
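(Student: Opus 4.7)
The plan is to test the first equation of \eqref{mainequation1} against the multiplier $(x,\nabla u)$ on a family of balls $B_R\subset\R^3$, integrate by parts term by term, and then pass to the limit $R\to\infty$ along a suitable sequence on which every boundary contribution vanishes. Standard elliptic regularity applied to a weak solution $(u,\phi_u)$ (using the continuity of $V$ and $f$, together with $(f_1)$--$(f_2)$ and $(V_2)$) upgrades $u$ and $\phi_u$ enough that all pointwise computations and integrations by parts are legitimate. The hypothesis $(V_1)$, combined with H\"older's inequality and the Sobolev embedding $D^{1,2}(\R^3)\hookrightarrow L^6(\R^3)$, also guarantees the absolute convergence of $\int_{\R^3}(x,\nabla V)u^2\wrt x$.

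I would handle the four bulk integrals in the standard way. The classical Pohozaev manipulation on the ball gives
\[
\int_{B_R}(-\Delta u)(x,\nabla u)\wrt x = -\tfrac12\int_{B_R}|\nabla u|^2\wrt x + \tfrac{R}{2}\int_{\partial B_R}\bigl(|\nabla u|^2-2|\partial_\nu u|^2\bigr)\wrt S.
\]
Using $u(x,\nabla u) = (x,\nabla)(u^2/2)$ and integrating by parts, the potential term contributes $-\tfrac32\int_{B_R}Vu^2\wrt x - \tfrac12\int_{B_R}(x,\nabla V)u^2\wrt x$ plus a boundary piece. Similarly, $f(u)(x,\nabla u)=(x,\nabla)F(u)$ produces $-3\int_{B_R}F(u)\wrt x$ plus a boundary piece.

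The key step is the nonlocal term. Exploiting $|u|^3u(x,\nabla u)=(x,\nabla)(|u|^5/5)$ and integrating by parts yields
\[
\int_{B_R}\phi_u|u|^3u(x,\nabla u)\wrt x = -\tfrac15\int_{B_R}(x,\nabla\phi_u)|u|^5\wrt x - \tfrac35\int_{B_R}\phi_u|u|^5\wrt x + \mbox{bdry}.
\]
Then I use the Poisson equation $-\Delta\phi_u=|u|^5$ to rewrite $\int(x,\nabla\phi_u)|u|^5=\int(x,\nabla\phi_u)(-\Delta\phi_u)$ and apply the same Pohozaev-type manipulation to $\phi_u$ (a $D^{1,2}(\R^3)$ function with pointwise decay $\sim 1/|x|$ by \eqref{Riesz}) to obtain $\int(x,\nabla\phi_u)(-\Delta\phi_u)=-\tfrac12\int|\nabla\phi_u|^2\wrt x$; part $(2)$ of Lemma \ref{phi} identifies this with $-\tfrac12\int\phi_u|u|^5\wrt x$. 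Combining, $\int\phi_u|u|^3u(x,\nabla u)\wrt x=-\tfrac12\int\phi_u|u|^5\wrt x$, precisely the coefficient demanded in \eqref{Pohozaev1}.

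It remains to kill every boundary contribution. Because $\int_{\R^3}\bigl(|\nabla u|^2+Vu^2+|u|^6+|\nabla\phi_u|^2+\phi_u|u|^5\bigr)\wrt x<\infty$, a radial-Fubini argument shows that for each of the finitely many nonnegative boundary densities $g_k(R)$ arising above, $\int_0^\infty g_k(R)\wrt R<\infty$, which forces the existence of $R_n\to\infty$ with $R_ng_k(R_n)\to 0$; a standard diagonal extraction then yields a single $R_n\to\infty$ annihilating every boundary term simultaneously. The main subtlety I anticipate is controlling the mixed nonlocal boundary quantities, which requires combining the pointwise decay of $\phi_u$ given by \eqref{Riesz} with the $L^6$-integrability of $u$. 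Once all boundary terms vanish along this $R_n$, summing the four bulk identities and rearranging produces \eqref{Pohozaev1}. The special case \eqref{Pohozaev2} follows at once since $V\equiv 1$ makes $(x,\nabla V)\equiv 0$.
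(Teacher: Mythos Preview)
Your proposal is correct and follows essentially the same route as the paper: multiply the first equation by $(x,\nabla u)$ on balls $B_R$, integrate each term by parts, handle the nonlocal piece by writing $\int\phi_u|u|^3u(x,\nabla u)=-\tfrac15\int(x,\nabla\phi_u)|u|^5-\tfrac35\int\phi_u|u|^5$, then use the Poisson equation $-\Delta\phi_u=|u|^5$ and the Poho\v{z}aev manipulation for $\phi_u$ together with Lemma~\ref{phi}(2) to reach the coefficient $-\tfrac12$. Your treatment is in fact more careful than the paper's in two respects: you invoke elliptic regularity to justify the pointwise computations, and you use the radial-Fubini/diagonal-extraction argument to kill the boundary terms along a sequence $R_n\to\infty$, whereas the paper simply writes ``letting $R\to+\infty$'' without further comment.
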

\begin{proof}
For any $R>0$, by multiplying $x\cdot\nabla u$ and $x\cdot\nabla \phi$ on the both sides for the two equations in \eqref{mainequation1} respectively and then integrating on $B_R$, one has
\begin{equation}\label{Pohozaev3}
\int_{B_R}-\Delta u(x,\nabla u)dx=-\frac{1}{2}\int_{B_R}|\nabla u|^2dx-\frac{1}{R}\int_{\partial B_R}|(x,\nabla u)|^2dx+\frac{R}{2}\int_{\partial B_R}|\nabla u|^2dx,
 \end{equation}
\begin{equation}\label{Pohozaev4}
 \int_{B_R} V(x)u(x,\nabla u)dx=-\frac{3}{2}\int_{B_R}V(x)|u|^2dx-\frac{1}{2}\int_{B_R}|u|^2(x,\nabla V)dx+\frac{R}{2}\int_{\partial B_R}|u|^2dx,
\end{equation}
\begin{equation}\label{Pohozaev5}
   \int_{B_R} \phi|u|^3 u(x,\nabla u)dx=-\frac{1}{5}\int_{B_R} |u|^5(x,\nabla \phi)dx-
  -\frac{3}{5}\int_{B_R} \phi|u|^5dx
+\frac{R}{5}\int_{\partial B_R} \phi|u|^5dx,
\end{equation}
\begin{equation}\label{Pohozaev55}
 \int_{B_R}f(u)(x,\nabla u)dx=-3\int_{B_R}F(u)dx+R\int_{\partial B_R}F(u)dx,
\end{equation}
 \begin{equation}\label{Pohozaev6}
 \int_{B_R}-\Delta \phi(x,\nabla \phi)dx=-\frac{1}{2}\int_{B_R}|\nabla \phi|^2dx-\frac{1}{R}\int_{\partial B_R}|(x,\nabla \phi)|^2dx+\frac{R}{2}\int_{\partial B_R}|\nabla \phi|^2dx.
\end{equation}
and
\begin{equation}\label{Pohozaev7}
 \int_{B_R} |u|^5(x,\nabla \phi)dx=\int_{B_R}-\Delta \phi(x,\nabla \phi)dx.
\end{equation}
It follows from \eqref{Pohozaev3}-\eqref{Pohozaev55} and letting $R\to+\infty$ that
\begin{align}\label{Pohozaev8}
\nonumber&\frac{1}{2}\int_{\R^3}|\nabla u|^2dx+\frac{3}{2}\int_{\R^3}V(x)|u|^2dx+\frac{1}{2}\int_{\R^3}|u|^2(x,\nabla V)dx-
\frac{1}{5}\int_{\R^3} |u|^5(x,\nabla \phi)dx\\
    &   -\frac{3}{5}\int_{\R^3} \phi|u|^5dx
 =3\int_{\R^3}F(u)dx.
 \end{align}
Using \eqref{Pohozaev6}-\eqref{Pohozaev8} and letting $R\to+\infty$ again, one can deduce that
\begin{align*}
 &\frac{1}{2}\int_{\R^3}|\nabla u|^2dx+\frac{3}{2}\int_{\R^3}V(x)|u|^2dx+\frac{1}{2}\int_{\R^3}|u|^2(x,\nabla V)dx+
\frac{1}{10}\int_{\R^3}|\nabla \phi|^2dx\\
    &  -\frac{3}{5}\int_{\R^3} \phi|u|^5dx
 =3\int_{\R^3}F(u)dx.
\end{align*}
Recalling that
\[
\int_{\R^3}|\nabla \phi|^2dx=\int_{\R^3} \phi|u|^5dx,
\]
then by means of the above two formulas, we obtain \eqref{Pohozaev1}. Hence \eqref{Pohozaev2} is trivial.
\end{proof}

\begin{lemma}\label{Mountainpass}
The functional $I(u)$ corresponding to \eqref{mainequation3} satisfies the Mountain-pass geometry around $0\in H^1(\R^3)$, that is,
\begin{enumerate}[$(i)$]
  \item there exist $\alpha_1,\rho_1>0$ such that $I(u)\geq \alpha_1>0$ when $\|u\|=\rho_1$;
  \item there exists $e\in H^1(\R^3)$ with $\|e\|>\rho_1$ such that $I(e)<0$.
\end{enumerate}
\end{lemma}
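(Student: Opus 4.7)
The plan is a routine verification of mountain-pass geometry using three ingredients already in hand: the growth bound \eqref{growth} on $F$, the Sobolev-type estimate \eqref{Sobolev2} for the critical nonlocal term, and the homogeneity $\phi_{tu}=t^5\phi_u$ from Lemma \ref{phi}(3). Since the functional associated with \eqref{mainequation3} reads
\[
I(u) = \frac{1}{2}\|u\|^2 - \frac{1}{10}\int_{\R^3} \phi_u|u|^5\,dx - \int_{\R^3} F(u)\,dx,
\]
where $\|u\|$ is the standard $H^1$-norm (because $V\equiv 1$), both the nonlocal term and $-F(u)\leq 0$ have the right sign to push $I$ downward at large amplitudes, while only $-F(u)$ competes against the quadratic near $u=0$.

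For part (i), I would fix a small $\epsilon>0$, apply \eqref{growth} together with the Sobolev embeddings $H^1(\R^3)\hookrightarrow L^2(\R^3)\cap L^6(\R^3)$, and combine with \eqref{Sobolev2} to obtain
\[
I(u) \;\geq\; \tfrac12\|u\|^2 - \epsilon\|u\|^2 - C_\epsilon S^{-3}\|u\|^6 - \tfrac{1}{10}S^{-6}\|u\|^{10}.
\]
Choosing $\epsilon=\tfrac14$ absorbs the quadratic penalty and leaves $I(u)\geq \tfrac14\|u\|^2 - C_1\|u\|^6 - C_2\|u\|^{10}$. This lower bound is strictly positive provided $\|u\|=\rho_1$ is taken small enough, which yields the required $\alpha_1>0$.

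For part (ii), I would pick any nonnegative, nontrivial $u_0\in C_c^\infty(\R^3)$. By the Riesz-potential formula \eqref{Riesz} we have $\phi_{u_0}>0$ on $\R^3$, whence $\int_{\R^3}\phi_{u_0}|u_0|^5\,dx>0$. Using Lemma \ref{phi}(3) and the nonnegativity of $F$ on $\R^+$ (which follows from $f\in C(\R,\R^+)$ in $(f_1)$),
\[
I(tu_0) \;\leq\; \tfrac{t^2}{2}\|u_0\|^2 - \tfrac{t^{10}}{10}\int_{\R^3}\phi_{u_0}|u_0|^5\,dx \;\longrightarrow\; -\infty \quad\text{as } t\to+\infty.
\]
Setting $e:=t_0 u_0$ with $t_0$ sufficiently large then produces an element of $H^1(\R^3)$ with $\|e\|>\rho_1$ and $I(e)<0$.

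There is no substantive obstacle: the critical nonlocal term, despite being of highest order, carries a negative sign and a $t^{10}$ scaling that automatically dominates the kinetic term, so assumption $(f_3)$ is not even needed at this stage. The only mild point worth recording is the strict positivity of $\int\phi_{u_0}|u_0|^5\,dx$, which is immediate from the positivity of the Riesz potential of any nontrivial nonnegative density.
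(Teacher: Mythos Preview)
Your proof is correct and follows essentially the same approach as the paper: for (i) both you and the paper combine the growth bound \eqref{growth}, the estimate \eqref{Sobolev2}, and Sobolev embeddings to bound $I(u)$ below by a polynomial in $\|u\|$ that is positive for small radii; for (ii) both use $F\geq 0$ from $(f_1)$ together with the $t^{10}$-scaling of the nonlocal term (Lemma \ref{phi}(3)) to drive $I(tu)\to-\infty$. Your added remark on the strict positivity of $\int\phi_{u_0}|u_0|^5\,dx$ makes explicit a point the paper leaves implicit.
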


\begin{proof}
 $(i)$ By the definition of $I$, \eqref{growth}, \eqref{Sobolev2} and Sobolev theorem, one has
 \begin{eqnarray*}
I(u)&=& \frac{1}{2}\int_{\R^3}|\nabla u|^2dx+\frac{1}{2}\int_{\R^3}|u|^2dx-\frac{1}{10}\int_{\R^3} \phi_u |u|^5 dx -\int_{\R^3}F(u)dx\\
&\geq&\frac{1}{2}\|u\|^2-\frac{1}{10} S^{-6}\|u\|^{10}-C\epsilon\|u\|^{2}-CC_\epsilon\|u\|^{6},
\end{eqnarray*}
Therefore letting $\epsilon=\frac{1}{4C}>0$, then there exist constants
$\alpha_1,\rho_1>0$ such that $I(u)\geq \alpha_1$ whenever $\|u\|=\rho_1$.

$(ii)$ For any $u\in H^1(\R^3)\backslash\{0\}$, it follows from the assumption $(f_1)$ that
 \[
 I(tu)\leq\frac{t^2}{2}\|u\|^2-\frac{t^{10}}{10}\int_{\R^3} \phi_u |u|^5 dx\to-\infty
 \]
 as $t\to+\infty$. Hence choosing $e=t_0u$ for some $u\in H^1(\R^3)\backslash\{0\}$ with
$t_0$ large enough, then one has $\|e\|>\rho_1$ and $I(e)<0$.
\end{proof}

As a consequence of Lemma \ref{Mountainpass}, we can find a $(PS)$ sequence of the functional $I(u)$ at the level
\begin{equation}\label{Mountainpass1}
 c:=\inf_{\eta\in \Gamma}\max_{t\in [0,1]}I(\eta(t))>0,
\end{equation}
where the set of paths is defined as
\begin{equation}\label{Mountainpass2}
  \Gamma:=\big\{\eta\in C([0,1],H^1(\R^3)):\eta(0)=0, I(\eta(1))<0\big\}.
\end{equation}

\begin{remark}
It is easy to see that
\begin{equation}\label{compare}
  c\leq \inf_{u\in H^1(\R^3)\backslash\{0\}}\max_{t\geq 0}I(tu).
\end{equation}
Indeed, for any $u\in H^1(\R^3)\backslash\{0\}$, similar to Lemma \ref{Mountainpass} $(ii)$ there exists a sufficiently large $t_0>0$ such that $I(t_0u)<0$. Let us choose $\gamma_0(t)=tt_0u$, therefore $\gamma_0\in C([0,1],H^1(\R^3))$ and moreover $\gamma_0\in \Gamma$, thus
\[
c\leq \max_{t\in [0,1]}I(\gamma_0(t))=\max_{t\in [0,1]}I(tt_0u)=\max_{t\in [0,t_0]}I(tu)\leq \max_{t\geq 0}I(tu).
\]
Since $u\in H^1(\R^3)\backslash\{0\}$ in arbitrary, then \eqref{compare} holds.
\end{remark}

Even though a $(PS)_c$ sequence has been established, we can never easily affirm that whether this $(PS)_c$ sequence is bounded because the Ambrosetti-Rabinowitz type condition or monotonicity assumption on the nonlinearity is not satisfied. To overcome it, we will construct a $(PS)$ sequence $\{u_n\}$ for $I$ at the level $c$ that satisfies $P(u_n)\to 0$ as $n \to \infty $, here $P(u)$ is given by \eqref{Pohozaev2}.

\begin{proposition}\label{getPScsequence}
There exists a sequence $\{u_n\}\subset H^1(\R^3)$ such that
\begin{equation}\label{PScsequence}
I(u_n) \to c,~I^{\prime}(u_n)\to 0,~P(u_n)\to 0~\text{as}~n \to \infty .
\end{equation}
\end{proposition}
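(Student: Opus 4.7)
The plan is to implement the scaling trick of L.~Jeanjean \cite{Jeanjean}, adapted to absorb the critical nonlocal term. The starting point is the observation that $P(u)$ is exactly the derivative of $I$ along the one-parameter dilation $H_\theta v(x) := v(e^{-\theta}x)$. I would therefore introduce the auxiliary functional
\[
\tilde I:H^1(\R^3)\times\R\to\R,\qquad \tilde I(v,\theta):=I(H_\theta v),
\]
on the product Hilbert space $E=H^1(\R^3)\times\R$ with the natural product norm. Using the scaling identities $\int|\nabla H_\theta v|^2=e^{\theta}\int|\nabla v|^2$, $\int|H_\theta v|^2=e^{3\theta}\int|v|^2$ and $\int\phi_{H_\theta v}|H_\theta v|^5=e^{5\theta}\int\phi_v|v|^5$, I would verify that $\tilde I\in C^1(E,\R)$ and
\[
\partial_\theta\tilde I(v,\theta)=\tfrac{e^\theta}{2}\!\int|\nabla v|^2+\tfrac{3e^{3\theta}}{2}\!\int|v|^2-\tfrac{e^{5\theta}}{2}\!\int\phi_v|v|^5-3e^{3\theta}\!\int F(v)=P(H_\theta v).
\]

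Next, on $E$ consider the mountain-pass family $\tilde\Gamma=\{\tilde\gamma\in C([0,1],E):\tilde\gamma(0)=(0,0),\ \tilde I(\tilde\gamma(1))<0\}$ together with its minimax level $\tilde c=\inf_{\tilde\gamma\in\tilde\Gamma}\max_{t\in[0,1]}\tilde I(\tilde\gamma(t))$. I would show $\tilde c=c$ by two inclusions: any $\gamma\in\Gamma$ lifts to $(\gamma,0)\in\tilde\Gamma$ with the same max (giving $\tilde c\le c$), and any $\tilde\gamma=(\eta,\sigma)\in\tilde\Gamma$ projects under $H$ to the continuous curve $t\mapsto H_{\sigma(t)}\eta(t)\in\Gamma$ with identical max (giving $c\le\tilde c$).

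The third step is the one I expect to be the main obstacle: extracting from the minimax scheme on $\tilde I$ a sequence $\{(v_n,\theta_n)\}\subset E$ with
\[
\tilde I(v_n,\theta_n)\to c,\qquad \partial_v\tilde I(v_n,\theta_n)\to 0\ \text{in}\ (H^1)^*,\qquad \partial_\theta\tilde I(v_n,\theta_n)\to 0,
\]
and, crucially, with $\theta_n\to 0$ rather than only $\theta_n$ bounded. Following Jeanjean's scheme, I would start from $\gamma_n\in\Gamma$ satisfying $\max_t I(\gamma_n(t))\to c$, lift them to $(\gamma_n,0)\in\tilde\Gamma$, and apply Ekeland's variational principle to the functional $\tilde\gamma\mapsto\max_t\tilde I(\tilde\gamma(t))$ on $\tilde\Gamma$ equipped with the $C([0,1],E)$ metric. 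This yields near-optimal paths whose $\theta$-coordinate is uniformly small, and hence points $(v_n,\theta_n)$ near the maxima of these paths with $\theta_n\to 0$ and the three displayed limits. The additional factor $e^{5\theta}$ introduced by the critical nonlocal term is still monotone and smooth in $\theta$, so it does not affect the deformation/Ekeland machinery—this is where the adaptation of the original local argument is carried out.

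Finally, with $u_n:=H_{\theta_n}v_n$, the identity $\tilde I(v_n,\theta_n)=I(u_n)$ yields $I(u_n)\to c$ and $\partial_\theta\tilde I(v_n,\theta_n)=P(u_n)\to 0$. For the gradient bound, observe that since $H_{\theta_n}$ is linear, for every $\varphi\in H^1(\R^3)$ one has $I(u_n+s\varphi)=\tilde I(v_n+sH_{-\theta_n}\varphi,\theta_n)$, whence
\[
\langle I'(u_n),\varphi\rangle=\langle\partial_v\tilde I(v_n,\theta_n),H_{-\theta_n}\varphi\rangle.
\]
Because $\theta_n\to 0$, the scaling estimate $\|H_{-\theta_n}\varphi\|^2=e^{-\theta_n}\!\int|\nabla\varphi|^2+e^{-3\theta_n}\!\int|\varphi|^2\le(1+o(1))\|\varphi\|^2$ shows $\|I'(u_n)\|_{(H^1)^*}\le(1+o(1))\|\partial_v\tilde I(v_n,\theta_n)\|_{E^*}\to 0$, finishing the proof.
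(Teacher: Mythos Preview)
Your proposal is correct and follows essentially the same approach as the paper: define the auxiliary functional $\tilde I(v,\theta)=I(H_\theta v)$ on $H^1(\R^3)\times\R$, show its mountain-pass level equals $c$ via the two path inclusions, apply the quantitative Ekeland principle with paths lifted as $(\gamma_n,0)$ to obtain a PS sequence $(v_n,\theta_n)$ localized near these lifts (hence $\theta_n\to 0$), and transfer back via $u_n=H_{\theta_n}v_n$ using $\partial_\theta\tilde I=P(H_\theta v)$ and the uniform boundedness of $H_{-\theta_n}$ for $\theta_n\to 0$. The paper's proof is identical in structure, with its equation for $\min_\theta\|(\tau_n,v_n)-(0,\gamma_n(\theta))\|\to 0$ playing exactly the role you assign to ``$\theta_n\to 0$''.
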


\begin{proof}
Following the idea introduced by L. Jeanjean \cite{Jeanjean},
we define a map $\Phi(\tau,v) :\R \times {H^1}(\R^3) \to H^1(\R^3)$ for $\tau\in\R$, $v \in H^1(\R^3)$ and $x \in \R^3$ by $$
\Phi(\tau,v) = v(e^{-\tau}x).
$$
For every $\tau\in\R$, $v\in H^1(\R^3)$, the functional $I\circ\Phi(\tau,v)\in C^1(\R \times {H^1}(\R^3),\R)$ is computed as
\[
I\circ \Phi= \frac{e^{\tau}}{2}\int_{\R^3}|\nabla v|^2dx+\frac{e^{3\tau}}{2}\int_{\R^3}|v|^2dx
-\frac{e^{5\tau}}{10}\int_{\R^3}\phi |v|^5dx
  -e^{3\tau}\int_{\R^3}F(v)dx.
\]
Similar to Lemma \ref{Mountainpass}, we can easily check that $I\circ\Phi(\tau,v)>0$ for all $(\tau,v)$ with $|\tau|>0$, $\|v\|>0$ small and $(I\circ\Phi)(0,u_0)< 0$, $i.e.$ $I\circ\Phi$ possesses the Mountain-pass geometry in $\R\times H^1(\R^3)$. Hence we can define the Mountain-pass level of $I\circ\Phi$:
\begin{equation}\label{Mountainpass3}
{\tilde c}: = \mathop {\inf }\limits_{\tilde \eta\in {{\tilde \Gamma }}} \sup_{\theta\in[0,1]}(I\circ\Phi)(\tilde \eta (\theta)),
\end{equation}
where the set of paths is defined as
\begin{equation}\label{Mountainpass4}
{\tilde\Gamma}:=\big\{{\tilde\eta\in C([0,1],\R\times H^1(\R^3)):\tilde\eta(0)=(0,0),~(I\circ\Phi)(\tilde\eta(1))<0} \big\}.
\end{equation}
It is easy to see that $\{0\}\times\Gamma\subset\tilde{\Gamma}$ and then $\tilde{c}\leq c$. On the other hand, $\Phi(\tilde{\Gamma})\subset \Gamma$ gives us that $\tilde{c}\geq c$. Therefore,
 the Mountain-pass levels of $I$ and $I\circ\Phi$ coincide, that is,
\begin{equation}\label{cc}
  c = \tilde c.
\end{equation}
Let $\gamma_n\subset \Gamma$ satisfy $\max_{\theta\in[0,1]}I(\gamma_n(\theta))\leq c+\frac{1}{n^2}$ and set $\widetilde{\gamma}_n(\theta)=(0,\gamma_n(\theta))$, then $\widetilde{\gamma}_n(\theta)\subset \widetilde{\Gamma}$ and by \eqref{cc}, we derive
\[
\max_{\theta\in[0,1]}(I\circ \Phi)(\widetilde{\gamma}_n(\theta))\leq \widetilde{c}+\frac{1}{n^2}.
\]
Hence by means of the Ekeland's variational principle \cite{Ekeland}, we can see that there exists a sequence $\{(\tau_n,v_n)\}_{n \in \mathbb{N}}$ in $\R\times H^1(\R^3)$ such that,
\begin{equation}\label{Mountainpass5}
(I\circ \Phi )(\tau_n,v_n) \to c,
\end{equation}
\begin{equation}\label{Mountainpass6}
(I\circ \Phi )^{\prime}({\tau_n},{v_n}) \to 0{\text{ in (}}\R \times {H^1}({\R^3}){)^{ - 1}},
\end{equation}
 and
\begin{equation}\label{Mountainpass66}
  \min_{\theta\in[0,1]}\|({\tau_n},{v_n})-\widetilde{\gamma}_n(\theta)\|_{\R \times H^1(\R^3)}\to 0
\end{equation}
as $n\to\infty$, where the norm of $(\tau,u)\in \R \times H^1(\R^3)$ is defined as usual as $\|(\tau,u)\|=(\tau^2+\|u\|^2)^{\frac{1}{2}}$.

Set $u_n=\Phi(\tau_n,v_n)$, then by \eqref{Mountainpass5} we deduce that
\[
I(u_n)\to c.
\]
For every $(h,w) \in \R\times H^1(\R^3)$, by \eqref{Mountainpass6} we have
 \begin{equation}\label{Mountainpass7}
\big\langle (I\circ\Phi)^{\prime}(\tau_n,v_n),(h,w)\big\rangle  = \big\langle I^{\prime}(\Phi(\tau_n,v_n)),\Phi (\tau_n,w)\big\rangle + P(\Phi(\tau_n,v_n))h\to 0.
\end{equation}
Taking $h=1$, $w=0$ in \eqref{Mountainpass7}, we get
\[
P(u_n)=P(\Phi(\tau_n,v_n)) \to 0.
\]
If we take $h=0$ in \eqref{Mountainpass7}, then we have
\[
\langle I^{\prime}(u_n),w \rangle=\big\langle (I\circ\Phi)^{\prime}(\theta_n,v_n),(\Phi(-\theta_n),0)\big\rangle,
\]
and combing with \eqref{Mountainpass66}, we derive
$$
I^{\prime}(u_n)\to 0.
$$
Hence, we have got a sequence $\{u_n\}_{n=1}^\infty\subset H^1(\R^3)$ that satisfies \eqref{PScsequence}.
\end{proof}

To find a least energy solution for problem \eqref{mainequation3}, we set
$$
m\triangleq\inf\big\{I(w):w\in H^1(\R^3)\backslash\{0\}  \ \ \text{and}\ \ P(w)=0\big\}.
$$

\begin{lemma}\label{compare}
$m\geq c$, where $c$ is given by \eqref{Mountainpass1}.
\end{lemma}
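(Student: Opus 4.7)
The plan is to show that every admissible $w$ in the definition of $m$ lies on some path in $\Gamma$ whose maximum value of $I$ equals $I(w)$; the bound $c \le I(w)$ is then immediate from \eqref{Mountainpass1}, and taking the infimum over such $w$ yields $c \le m$.

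Fix $w \in H^1(\R^3) \setminus \{0\}$ with $P(w) = 0$, and consider the dilation $w_t(x) := w(x/t)$ for $t > 0$. A change of variables in the Riesz representation \eqref{Riesz} gives $\phi_{w_t}(x) = t^2\, \phi_w(x/t)$, and hence
$$
g(t) := I(w_t) = \tfrac{t}{2} A + \tfrac{t^3}{2} B - \tfrac{t^5}{10} C - t^3 D,
$$
where $A := \int_{\R^3}|\nabla w|^2\, dx$, $B := \int_{\R^3}|w|^2\, dx$, $C := \int_{\R^3}\phi_w |w|^5\, dx$, and $D := \int_{\R^3} F(w)\, dx$. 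Using $P(w) = 0$ (Lemma \ref{Pohozaev}, equation \eqref{Pohozaev2}) to eliminate $D$, a short simplification yields
$$
g'(t) = \tfrac{1}{2}(1-t^2)(A + C t^2).
$$
Since $w \neq 0$, clearly $A > 0$; moreover $\phi_w \equiv 0$ would force $|w|^5 \equiv 0$ via $-\Delta \phi_w = |w|^5$, so $C = \|\phi_w\|_{D^{1,2}(\R^3)}^2 > 0$ by part $(2)$ of Lemma \ref{phi}. Consequently $g$ is strictly increasing on $(0,1)$ and strictly decreasing on $(1,\infty)$, with $g(0^+) = 0$, $g(+\infty) = -\infty$, and $\max_{t > 0} g(t) = g(1) = I(w)$.

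Pick $T > 1$ large enough that $g(T) < 0$ and define $\eta : [0,1] \to H^1(\R^3)$ by $\eta(0) = 0$ and $\eta(s) := w_{Ts}$ for $s \in (0,1]$. Continuity at $s = 0$ follows from $\|w_{Ts}\|^2 = Ts\cdot A + (Ts)^3\cdot B \to 0$ as $s \to 0^+$, and is automatic elsewhere. Then $\eta \in \Gamma$ and $\max_{s \in [0,1]} I(\eta(s)) = g(1) = I(w)$, so $c \leq I(w)$ by the definition \eqref{Mountainpass1}. Taking the infimum over all $w \in H^1(\R^3)\setminus\{0\}$ with $P(w) = 0$ yields $c \le m$.

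The only mild technical point is confirming the scaling identity for $\phi_{w_t}$ and deriving the explicit form of $g$; after that, the monotonicity of $g$ together with the definition of $c$ do all the work, so I do not anticipate any genuine obstacle.
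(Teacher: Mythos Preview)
Your proof is correct and follows essentially the same route as the paper: both use the dilation $w_t(x)=w(x/t)$, exploit $P(w)=0$ to reduce $I(w_t)$ to the expression $(\tfrac{t}{2}-\tfrac{t^3}{6})A+(\tfrac{t^3}{6}-\tfrac{t^5}{10})C$, observe that this is maximized at $t=1$, and then build a path in $\Gamma$ by rescaling. The only cosmetic differences are that the paper reaches the simplified form via the one-line trick $I(w_t)-\tfrac{t^3}{3}P(w)$ rather than substituting for $D$, and does not spell out the factorization of $g'(t)$ or the continuity at $s=0$ as explicitly as you do.
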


\begin{proof}
For any $w\in H^1(\R^3)\backslash\{0\}$ with $P(w) =0$, then letting $w_t\triangleq w(\frac{x}{t})$ if $t>0$, and $w_t\triangleq w(x)$ if $t=0$. Hence we have that
\begin{eqnarray*}
I(w_t)&=& I(w_t)-\frac{t^3}{3}P(w) \\
   &=&\bigg(\frac{t}{2}-\frac{t^3}{6}\bigg)\int_{\R^3}|\nabla w|^2dx+\bigg(\frac{t^3}{6}-\frac{t^5}{10}\bigg)\int_{\R^3} \phi_{w} |w|^5dx.
\end{eqnarray*}
Clearly, there exists a sufficiently large $T_0>0$ such that $I(w_{T_0}) <0$ and $I(w_t)$ achieves the strict global maximum at $t=1$, that is, $I(w)=\max_{t>0}I(w_t)$.

On the other hand, we set the path
$$
\eta_0(t)=
\left\{%
\begin{array}{ll}
   w(\frac{x}{tT_0}) , & t>0, \\
    0, & t=0,\\
\end{array}%
\right.
$$
thus $\eta_0(0)=0$, $I(\eta_0(1))=I(w_{T_0}) <0$ and then $\eta_0(t)\in \Gamma$, where $\Gamma$ is given by \eqref{Mountainpass2}. As a consequence of this fact, we can infer that
\[
I(w)=\max_{t\geq0}I(w_t)=\max_{t\geq0}I(w_{tT_0})=\max_{t\geq0}I\big(\eta_0(t)\big)\geq \max_{t\in[0,1]}I\big(\eta_0(t)\big)\stackrel{\mathrm{\eqref{Mountainpass1}}}{\geq} c.
\]
Since $w\in H^1(\R^3)\backslash\{0\}$ with $P(w) =0$ is arbitrary, we have that $m\geq c$.
\end{proof}

Because of the appearance of the critical nonlocal term, we have to estimate the Mountain-pass value given by \eqref{Mountainpass1} carefully.
To do it, we choose the extremal function
$$
U_{\epsilon }(x)=\frac{(3\epsilon^2)^{\frac{1}{4}}}{(\epsilon^2+|x |^2)^{\frac{1}{2}}}
$$
to solve $-\Delta u=u^5$ in $\R^3$. Let
$\varphi\in C_0^{\infty}(\R^3)$ be a cut-off function verifying that $0\leq \varphi(x)\leq 1$ for all $x\in \R^3$, $\supp \varphi\subset B_{2}(x_0)$, and $\varphi(x)\equiv 1$ on $B_1(x_0)$.
Set $v_{\epsilon }=\varphi U_{\epsilon }$,
then thanks to the asymptotic estimates from \cite{Brezis1}, we have
\begin{equation}\label{estimate1}
  |\nabla v_{\epsilon }|_2^2=S^{\frac{3}{2}}+O(\epsilon), \ \  |v_{\epsilon }|_6^2=S^{\frac{1}{2}}+O(\epsilon)
\end{equation}
and for all $s\in [2,6)$,
\begin{equation}\label{estimate2}
  |v_{\epsilon }|_s^s=
\left\{
  \begin{array}{ll}
   O(\epsilon^{\frac{s}{2}}), & \text{if}~s\in[2,3), \\
     O(\epsilon^{\frac{3}{2}}|\log \epsilon|), & \text{if}~s=3, \\
     O(\epsilon^{\frac{6-s}{2}}), &   \text{if}~s\in(3,6).
  \end{array}
\right.
\end{equation}

\begin{lemma}\label{estimate}
For $q\in(3,5)$ with any $\mu>0$, or $q\in(1,3]$ with sufficiently large $\mu>0$, the Mountain-pass value given by \eqref{Mountainpass1} satisfies
\[
   c<\frac{2}{5}S^{\frac{3}{2}},
\]
where $S$ is the best Sobolev constant given in \eqref{Sobolev}.
\end{lemma}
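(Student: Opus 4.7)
The plan is to feed the truncated Talenti bubble $v_\epsilon$ into the min-max characterization \eqref{Mountainpass1}. Choosing $T>0$ large enough that $I(Tv_\epsilon)<0$, the path $\gamma(s)=sTv_\epsilon$ lies in $\Gamma$, so
\[
c\;\leq\;\max_{t\geq 0}I(tv_\epsilon).
\]
Setting $A_\epsilon:=\|v_\epsilon\|^2$ and $B_\epsilon:=\int_{\R^3}\phi_{v_\epsilon}|v_\epsilon|^5$, Lemma~\ref{phi}(3) together with $(f_3)$ yields the pointwise bound
\[
I(tv_\epsilon)\;\leq\;h(t)-\mu t^{q+1}|v_\epsilon|_{q+1}^{q+1},\qquad h(t):=\tfrac{t^2}{2}A_\epsilon-\tfrac{t^{10}}{10}B_\epsilon.
\]

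First I would maximize the purely critical part $h$: its unique positive critical point is $t_*=(A_\epsilon/B_\epsilon)^{1/8}$, with value $\max_{t\geq 0}h=\tfrac{2}{5}A_\epsilon^{5/4}B_\epsilon^{-1/4}$. By \eqref{estimate1}--\eqref{estimate2} one has $A_\epsilon=S^{3/2}+O(\epsilon)$. For $B_\epsilon$ I would exploit that the untruncated bubble satisfies $-\Delta U_\epsilon=U_\epsilon^5$, whence $\phi_{U_\epsilon}=U_\epsilon$ and $\int \phi_{U_\epsilon}U_\epsilon^5=|U_\epsilon|_6^6=S^{3/2}$. Since $0\leq v_\epsilon\leq U_\epsilon$ with equality on $B_1(x_0)$, a Hardy--Littlewood--Sobolev splitting of $\int\!\!\int v_\epsilon^5(x)v_\epsilon^5(y)|x-y|^{-1}dxdy$ into the pieces inside and outside $B_1(x_0)$, combined with the concentration estimate $\int_{B_1(x_0)^c}U_\epsilon^6=O(\epsilon^{3})$, gives $B_\epsilon=S^{3/2}+O(\epsilon^{5/2})$. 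Consequently
\[
\max_{t\geq 0}h(t)\;\leq\;\tfrac{2}{5}S^{3/2}+C_1\epsilon
\]
for some constant $C_1>0$ independent of $\epsilon$.

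To absorb the $C_1\epsilon$ into the $F$-contribution, I would localize the maximizer $t_\epsilon>0$ of $t\mapsto I(tv_\epsilon)$. From $f\geq 0$ and the Euler equation $t_\epsilon A_\epsilon=t_\epsilon^9 B_\epsilon+\int f(t_\epsilon v_\epsilon)v_\epsilon$ one gets $t_\epsilon^8\leq A_\epsilon/B_\epsilon$, hence $t_\epsilon\leq T_0$; substituting the growth bound $f(s)\leq s+Cs^5$ (from $(f_1),(f_2)$) into the same equation forces $t_\epsilon\geq t_0$, all uniformly in $\epsilon$. Therefore
\[
c\;\leq\;\tfrac{2}{5}S^{3/2}+C_1\epsilon-\mu t_0^{q+1}|v_\epsilon|_{q+1}^{q+1}.
\]
By \eqref{estimate2}, for $q\in(3,5)$ one has $|v_\epsilon|_{q+1}^{q+1}\sim \epsilon^{(5-q)/2}$ with exponent in $(0,1)$, which beats $C_1\epsilon$ for every fixed $\mu>0$ and all small $\epsilon$. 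For $q\in(1,3]$ the corresponding exponent is $1$ when $q=3$, $3/2$ with a logarithmic factor when $q=2$, $(q+1)/2$ when $q\in(1,2)$, or $(5-q)/2$ when $q\in(2,3)$; fixing $\epsilon$ sufficiently small and then taking $\mu$ large enough makes the $F$ term outweigh $C_1\epsilon$, yielding the strict inequality in both regimes.

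The main technical obstacle is the deviation estimate $B_\epsilon=S^{3/2}+O(\epsilon^{5/2})$: the quantity $B_\epsilon$ is a doubly nonlocal Riesz integral rather than a Lebesgue norm, and the convenient identity $\phi_{U_\epsilon}=U_\epsilon$ holds only for the full, untruncated bubble, so the defect introduced by the cutoff $\varphi$ must be quantified via the rapid concentration of $U_\epsilon$ near $x_0$ as $\epsilon\to 0$.
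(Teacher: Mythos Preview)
Your outline is correct and follows the same overall scheme as the paper (which defers the proof to the more general Lemma~\ref{estimateVK}): test with $tv_\epsilon$, bound the critical--nonlocal piece $h(t)=\tfrac{t^2}{2}A_\epsilon-\tfrac{t^{10}}{10}B_\epsilon$, show $\max h\le \tfrac25 S^{3/2}+O(\epsilon)$, localize $t_\epsilon\in[t_0,T_0]$, and then let the $F$--term with $(f_3)$ and \eqref{estimate2} absorb the $O(\epsilon)$.

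The one point where your route genuinely diverges is the lower bound on $B_\epsilon=\int_{\R^3}\phi_{v_\epsilon}|v_\epsilon|^5dx$. You plan to exploit $\phi_{U_\epsilon}=U_\epsilon$ for the untruncated bubble and then control the truncation defect via a Hardy--Littlewood--Sobolev splitting, which you correctly flag as the main technical obstacle. The paper avoids this entirely by a one--line trick: testing $-\Delta\phi_{v_\epsilon}=|v_\epsilon|^5$ against $|v_\epsilon|$ and applying Cauchy's inequality gives
\[
\int_{\R^3}|v_\epsilon|^6dx=\int_{\R^3}\nabla\phi_{v_\epsilon}\cdot\nabla|v_\epsilon|\,dx\le \tfrac12\int_{\R^3}|\nabla\phi_{v_\epsilon}|^2dx+\tfrac12\int_{\R^3}|\nabla v_\epsilon|^2dx=\tfrac12 B_\epsilon+\tfrac12|\nabla v_\epsilon|_2^2,
\]
so $B_\epsilon\ge 2|v_\epsilon|_6^6-|\nabla v_\epsilon|_2^2=S^{3/2}+O(\epsilon)$ directly from \eqref{estimate1}. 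This yields only an $O(\epsilon)$ error instead of your sharper $O(\epsilon^{5/2})$, but that is all the argument needs, and it bypasses any double--integral estimates on the Riesz kernel. Your approach works in principle and has the advantage of giving an asymptotic for $B_\epsilon$ rather than just a lower bound; the paper's approach is considerably shorter and sidesteps what you identified as the hard step.
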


\begin{proof}
This lemma can be seen as a direct corollary of Lemma \ref{estimateVK} in Section 3 where a more general case is taken into account, so
we do not give its detail proof.
\end{proof}

Now we introduce the Vanishing lemma for Sobolev space as follows.
\begin{lemma}\label{Vanishing}
\
\big(see \cite{P. L. Lions1,P. L. Lions2}\big) Assume that $\{u_n\}$ is bounded in $H^1(\R^3)$ and satisfies
\[
\lim_{n\to\infty}\sup_{y\in\R^3}\int_{B_r(y)}|u_n|^2dx=0
\]
for some $r>0$. Then $u_n\to 0$ in $L^m(\R^3)$ for every $2<m<6$.
\end{lemma}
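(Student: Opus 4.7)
\textbf{Proof plan for Lemma \ref{Vanishing}.}

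The strategy is the classical P.-L. Lions vanishing argument: establish strong convergence in one specific $L^{p_0}$ with $p_0 \in (2,6)$ by combining a local Hölder interpolation with the Sobolev embedding on balls, and then recover every $m \in (2,6)$ by interpolation with the bounded norms $|u_n|_2$ and $|u_n|_6$. Denote $\alpha_n := \sup_{y\in\R^3}\int_{B_r(y)}|u_n|^2dx$, so by hypothesis $\alpha_n \to 0$, and fix a bounded-overlap covering $\R^3 = \bigcup_i B_r(y_i)$, i.e.\ $\sum_i \one_{B_r(y_i)} \leq N$ pointwise for some $N=N(r)\in\N$.

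First I would prove $u_n \to 0$ in $L^4(\R^3)$. On each ball $B=B_r(y)$, Hölder's inequality (splitting $|u|^4 = |u|^{1/2 \cdot 2} \cdot |u|^{3/2 \cdot 2}$ with conjugate exponents $4$ and $4/3$) gives
\[
\int_B |u|^4 \,dx \leq \Bigl(\int_B |u|^2\,dx\Bigr)^{1/2}\Bigl(\int_B |u|^6\,dx\Bigr)^{1/2},
\]
and the Sobolev embedding $H^1(B) \hookrightarrow L^6(B)$ yields $\int_B |u|^6 \,dx \leq C\,\|u\|_{H^1(B)}^6$, hence $\int_B |u|^4\,dx \leq C\,|u|_{L^2(B)}\,\|u\|_{H^1(B)}^3$. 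Summing over the covering and using $\|u\|_{H^1(B_i)}^3 \leq \|u_n\|_{H^1(\R^3)} \cdot \|u\|_{H^1(B_i)}^2$ together with $\sum_i \|u\|_{H^1(B_i)}^2 \leq N\|u_n\|_{H^1(\R^3)}^2$, I obtain
\[
\int_{\R^3}|u_n|^4\,dx \leq C\,\alpha_n^{1/2}\,\|u_n\|_{H^1(\R^3)}^3 \leq C'\,\alpha_n^{1/2} \longrightarrow 0.
\]

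For general $m\in(2,6)$, I would interpolate against the bounded quantities $|u_n|_2$ (from $H^1$-boundedness) and $|u_n|_6$ (from Sobolev). Concretely, for $m\in(2,4]$ use Hölder between $L^2$ and $L^4$: $|u_n|_m \leq |u_n|_2^{\tau}\,|u_n|_4^{1-\tau}$ with $\tau = 4/m - 1 \in[0,1)$, which tends to zero because $|u_n|_4\to 0$. For $m\in[4,6)$ use Hölder between $L^4$ and $L^6$: $|u_n|_m \leq |u_n|_4^{\tau}\,|u_n|_6^{1-\tau}$ with $\tau = 12/m - 2 \in (0,1]$, again forcing $|u_n|_m \to 0$. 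The main obstacle is the intermediate step above: Hölder between $L^2$ and $L^6$ alone yields $\|u\|_{H^1(B_i)}^{(3m-6)/2}$ on each ball, and for $m$ near $2$ this exponent drops below $2$, so one cannot simply majorize by the globally bounded $H^1$-norm and sum. Going through the auxiliary level $L^4$ (where the exponent on $\|u\|_{H^1(B_i)}$ is exactly $3$, nicely absorbed via one factor of the global $H^1$-norm) circumvents this difficulty cleanly, and the interpolation step is then routine.
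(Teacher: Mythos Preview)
Your argument is correct and is precisely the standard proof of Lions' vanishing lemma (see, e.g., Willem \cite{Willem}, Lemma~1.21). Note, however, that the paper does not prove this statement at all: Lemma~\ref{Vanishing} is merely quoted from \cite{P. L. Lions1,P. L. Lions2} without proof, so there is nothing to compare against. Your choice of the auxiliary exponent $p_0=4$ is perfectly fine; the usual textbook presentation sometimes picks a $p_0$ for which the local estimate reads $\int_B|u|^{p_0}\le C\,\alpha_n^{\theta}\,\|u\|_{H^1(B)}^2$ directly (so that summation is immediate without pulling out a global factor), but your trick of splitting $\|u\|_{H^1(B_i)}^3 = \|u\|_{H^1(B_i)}\cdot\|u\|_{H^1(B_i)}^2$ and bounding the first factor by the global norm achieves the same end.
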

\section{Proof of main results}

In this section, we will show the proofs for Theorem \ref{maintheorem1} and Theorem \ref{maintheorem2} in detail. Before we prove Theorem \ref{maintheorem1}, we obtain Theorem \ref{maintheorem2} for convenience.
\vskip3mm
\begin{proof}[\textbf{Proof of Theorem \ref{maintheorem2}}]
Let $\{u_n\}$ be the sequence given in \eqref{PScsequence} and $c$ be the Mountain-pass value for the functional $I$, respectively. It is easy to show that $\{u_n\}$ is bounded is $H^1(\R^3)$. Indeed, the following computations are elementary, that is,
\begin{eqnarray*}
c+o(1)&\stackrel{\mathrm{\eqref{PScsequence}}}{=} & I(u_n)-\frac{1}{3}P(u_n) \\
   &=&\frac{1}{3}\int_{\R^3}|\nabla u_n |^2dx+\frac{1}{15}\int_{\R^3} \phi_{u_n} |u_n|^5dx\\
 &\geq&\frac{1}{3}\int_{\R^3}|\nabla u_n |^2dx\stackrel{\mathrm{\eqref{Sobolev}}}{\geq} \frac{S}{3}\bigg(\int_{\R^3}|u_n|^6dx\bigg)^{\frac{1}{3}}
\end{eqnarray*}
and in view of \eqref{growth}, \eqref{Sobolev0}, \eqref{PScsequence}, one has
\begin{eqnarray*}
\frac{1}{2}\int_{\R^3} |u_n|^2dx&\leq&\frac{1}{2}\int_{\R^3}|\nabla u_n|^2+|u_n|^2dx \\
   &{=}&c+\frac{1}{10}\int_{\R^3} \phi_{u_n} |u_n|^5dx+\int_{\R^3} F(u_n)dx+o(1)\\
 &{\leq}&c+\frac{1}{10S}|u_n|_6^{10}
 +\epsilon\int_{\R^3}u_n^2dx+C_\epsilon\int_{\R^3}|u_n|^6dx+o(1).
\end{eqnarray*}
Let $\epsilon=\frac{1}{4}>0$, thus there exists $C>0$ independent on $n\in \mathbb{N}$ such that $|u_n|_2\leq C<+\infty$ and then we can infer that $\{u_n\}$ is bounded is $H^1(\R^3)$. Therefore there exists $u\in H^1(\R^3)$ such that $u_n\rightharpoonup u$ in $H^1(\R^3)$ after passing to a subsequence. To end the proof, we will split it into several steps:
\vskip0.3cm
\underline{\textbf{Step 1:}} \ \ \ \   $u\not\equiv 0$.
\vskip0.3cm
\noindent In fact, we will argue it indirectly and just suppose
\begin{equation}\label{proof1}
 \lim_{n\to\infty}\sup_{y\in\R^3}\int_{B_r(y)}|u_n|^2dx=0,
\end{equation}
then from Lemma \ref{Vanishing} we derive that $u_n\to 0$ in $L^m(\R^3)$ for any $m\in(2,6)$. Thus by using Strass's compactness lemma in \cite{Berestycki} and the assumptions $(f_1)-(f_2)$, one has
\[
\int_{\R^3}F(u_n)dx\to 0\ \ \text{and}\ \ \int_{\R^3}u_nf(u_n)dx\to 0.
\]
Recalling \eqref{PScsequence} and the above formulas, we have the following results at once:
\[
I(u_n)=\frac{1}{2}\|u_n\|^2-\frac{1}{10}\int_{\R^3} \phi_{u_n}|u_n|^5dx=c+o(1)
\]
and
\[
\langle I^{\prime}(u_n),u_n\rangle=\|u_n\|^2-\int_{\R^3} \phi_{u_n}|u_n|^5dx=o(1).
\]
Hence without loss of generality, we may assume
\[
\lim_{n\to\infty}\|u_n\|^2=\lim_{n\to\infty}\int_{\R^3} \phi_{u_n}|u_n|^5dx=l,\ \  \text{and}\ \ c=\frac{2}{5}l.
\]
On the other hand, by \eqref{Sobolev2} we can deduce that
\[
\int_{\R^3} \phi_{u_n}|u_n|^5dx\leq  S^{-6}\|u_n\|^{10}
\]
which implies that $l\leq  S^{-6}l^5$. Hence either $l=0$ or $l\geq  S^{\frac{3}{2}}$. But $l=0$ yields that $c=0$ which is a contradiction to \eqref{Mountainpass1}, so $l\geq  S^{\frac{3}{2}}$. However
\[
c=\frac{2}{5}l\geq \frac{2}{5} S^{\frac{3}{2}}
\]
which also yields a contradiction to Lemma \ref{estimate}. Therefore \eqref{proof1} can never hold and
then there exist $r,\eta>0$ such that
\[
\lim_{n\to\infty}\int_{B_r(y_n)}|u_n|^2dx\geq \eta>0,
\]
here we may assume $y_n\in \mathbb{Z}^3$ by taking a larger $r$ if necessary. Since $I$ is invariant under translations by \eqref{Riesz}, we may assume that $\{y_n\}$ is bounded in $\mathbb{Z}^3$. Thus, passing to a subsequence we have $u_n\rightharpoonup u\neq 0$ in $H^1(\R^3)$.
\vskip0.3cm
\underline{\textbf{Step 2:}} \ \ \ \   $I^{\prime}(u)=0$.
\vskip0.3cm
\noindent To see this, since $C_0^{\infty}(\R^3)$ is dense in $H^1(\R^3)$, then it suffices to show
\[
\langle I^{\prime}(u),\varphi\rangle=0\ \ \text{for any}\ \ \varphi\in C_0^{\infty}(\R^3).
\]
Indeed $\{u_n\}$ is bounded in $H^1(\R^3)$, then there is $M\in(0,+\infty)$ such that
$$
\max\bigg\{|u_n|_2,|u|_2,|u_n|_6^5,|u|_6^5,|\varphi|_2,|\varphi|_{12}\bigg\}\leq M<+\infty.
$$
Hence for any $\sigma>0$, there exists $\delta=\delta(\sigma)\triangleq(\frac{\sigma}{2M^2C_\sigma})^{12}>0$ such that for any measurable set $E\subset \R^3$ with $|E|<\delta$ one has
\begin{eqnarray*}
  \int_{E} |f(u_n)\varphi|dx  &\stackrel{\mathrm{\eqref{growth}}}{\leq}& \frac{\sigma}{2M^2}\int_{E} | u_n||\varphi|dx+C_\sigma\int_{E} |u_n|^5|\varphi|dx  \\
   &\leq&   \frac{\sigma}{2M^2}|u_n|_2|\varphi|_2+C_\sigma|E|^{\frac{1}{12}}|u_n|_6^5|\varphi|_{12}<\sigma
\end{eqnarray*}
which reveals that $\{f(u_n)\varphi\}$ is uniformly integrable in $\R^3$. As a consequence of the Vitali's Dominated Convergence Theorem, we can deduce that
 \begin{equation}\label{weak6}
   \int_{\R^3}  f(u_n)\varphi dx =  \int_{\supp \varphi}f(u_n)\varphi dx\to   \int_{\supp \varphi}f(u)\varphi dx=\int_{\R^3}  f(u)\varphi dx.
 \end{equation}
 On the other hand, it follows from \eqref{weak4} and \eqref{weak6} that
\begin{eqnarray*}
 \langle I^{\prime}(u_n),\varphi\rangle&=& \int_{\R^3}\big[\nabla u_n \nabla\varphi +u_n\varphi\big] dx- \int_{\R^3} \phi_{u_n}|u_n|^3 u_n\varphi dx
 -\int_{\R^3}f(u_n)\varphi dx  \\
    &\to&\int_{\R^3}\big[\nabla u \nabla\varphi +u\varphi\big] dx- \int_{\R^3}\phi_{u}|u|^3 u\varphi dx-\int_{\R^3}f(u)\varphi dx  \\
   &=&  \langle I^{\prime}(u),\varphi\rangle
\end{eqnarray*}
which implies that
\[
\langle I^{\prime}(u),\varphi\rangle=0
\]
by \eqref{PScsequence} for any $\varphi\in C_0^{\infty}(\R^3)$.
\vskip0.3cm
\underline{\textbf{Step 3:}} \ \ \ \   $I(u)=m$ \ \ and \ \ $u(x)>0$\ \  in\ \  $\R^3$.
\vskip0.3cm
\noindent Using Lemma \ref{Pohozaev} and the Step 2 that $P(u)=0$, so by Fatou's lemma and Lemma \ref{compare}
\begin{eqnarray*}
c&\leq&m \ \ \leq\ \  I(u)=I(u)-\frac{1}{3}P(u) \\
   &=&\frac{1}{3}\int_{\R^3}|\nabla u |^2dx+\frac{1}{15}\int_{\R^3} \phi_{u} |u|^5dx\\
&\leq&\mathop{\lim\inf}_{n\to\infty}\bigg\{\frac{1}{3}\int_{\R^3}|\nabla u_n |^2dx+\frac{1}{15}\int_{\R^3} \phi_{u_n} |u_n|^5dx\bigg\} \\
&=&\mathop{\lim\inf}_{n\to\infty}\big\{I(u_n)-\frac{1}{3}P(u_n)\big\}\stackrel{\mathrm{\eqref{PScsequence}}}{=} c
\end{eqnarray*}
which reveals that $I(u)=m$.
The remainder is to show that $u(x)>0$ in $\R^3$. In fact, it is obvious that $|u|$ is also
a least energy solution of \eqref{mainequation3} since the functional $I$ is
symmetric, hence we may assume that such a least energy solution does not change sign,
$i.e.$ $u \geq0$. By means of the strong maximum principle and standard arguments, see $e.g.$
\cite{Alves,Benedetto,G. Li0,Moser,Trudinger}, we obtain that $u(x) > 0$ for all $x\in\R^3$. Thus, $(u,\phi_u)$ is a positive least energy solution of \eqref{mainequation3} and the proof is complete.
\end{proof}

Now we begin to deal with the case that $V(x)$ is not a constant, however the method to prove Theorem \ref{maintheorem2} can not be applied because of the effect on $V(x)$. In order to get a bounded $(PS)_c$ sequence for the functional $I_{V}$, we make use of the monotone method introduced by L. Jeanjean \cite{Jeanjean2}.

\begin{proposition}\label{proposition}
(See \cite[Theorem 1.1 and Lemma 2.3]{Jeanjean2}) Let $(X,\|\cdot\|)$ be a Banach space and $T\subset R^+$ be an interval, consider a family of $C^1$ functionals on $X$ of the form
$$
\Phi_{\lambda}(u)=A(u)-\lambda B(u),\ \  \forall \lambda\in T,
$$
with $B(u)\geq 0$ and either $A(u)\to +\infty$ or $B(u)\to +\infty$ as $\|u\|\to +\infty$. Assume that there are two points $v_1,v_2\in X$ such that
\begin{equation}\label{cla}
 c_{\lambda}=\inf_{\gamma\in \Gamma}\sup_{\theta\in [0,1]}\Phi_{\lambda}(\gamma(\theta))>\max\{\Phi_\la(v_1),\Phi_\la(v_1)\},
\ \  \forall \lambda\in T,
\end{equation}
where
\begin{equation}\label{Gamma}
  \Gamma=\{\gamma\in C([0,1],X):\gamma(0)=v_1, \gamma(1)=v_2\}.
\end{equation}
 Then, for almost $\lambda\in T$, there is a sequence $\{u_n(\la)\}\subset X$ such that
\begin{enumerate}[$(a)$]
  \item $\{u_n(\la)\}$ is bounded in $X$;
\end{enumerate}

\begin{enumerate}[$(b)$]
  \item $\Phi_\la(u_n(\la))\to c_\la$ and $\Phi^{\prime}_\la(u_n(\la))\to 0$;
\end{enumerate}

\begin{enumerate}[$(c)$]
  \item the map $\la\to c_\la$ is non-increasing and left continuous.
\end{enumerate}
\end{proposition}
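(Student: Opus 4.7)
The strategy is the monotonicity trick; the three claims separate naturally. Claim $(c)$ is soft and follows from the sign of $B$ alone, while claims $(a)$–$(b)$ exploit the fact that every monotone real-valued function is differentiable almost everywhere, combined with a path-surgery argument driven by the derivative of $\la\mapsto c_\la$.

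First I would dispatch $(c)$. For $\mu\le\la$ in $T$ and any $\gamma\in\Gamma$, the hypothesis $B\ge 0$ gives $\Phi_\mu(\gamma(\theta))\ge\Phi_\la(\gamma(\theta))$ for every $\theta\in[0,1]$; taking the supremum in $\theta$ and then the infimum over $\Gamma$ yields $c_\mu\ge c_\la$, so $\la\mapsto c_\la$ is non-increasing. For left continuity at a fixed $\la\in T$, choose $\epsilon>0$ and an almost-optimal path $\gamma\in\Gamma$ with $\sup_\theta\Phi_\la(\gamma(\theta))\le c_\la+\epsilon$. Since $\gamma([0,1])$ is compact and $B$ is continuous, $M_\gamma:=\sup_\theta B(\gamma(\theta))<+\infty$, so for any $\la_n\uparrow\la$,
\[
c_{\la_n}\le\sup_\theta\Phi_{\la_n}(\gamma(\theta))\le c_\la+\epsilon+(\la-\la_n)M_\gamma,
\]
which together with monotonicity forces $c_{\la_n}\to c_\la$.

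For $(a)$–$(b)$, since $\la\mapsto c_\la$ is monotone on $T$, by Lebesgue's theorem it is differentiable at almost every $\la\in T$. Fix such a differentiability point, set $D:=-c'_\la\ge 0$, and choose $\la_n\uparrow\la$ with $\la-\la_n=1/n$ together with paths $\gamma_n\in\Gamma$ that are nearly optimal for $\Phi_{\la_n}$, namely $\sup_\theta\Phi_{\la_n}(\gamma_n(\theta))\le c_{\la_n}+1/n$. The key pointwise bound is that on the high portion of $\gamma_n$, $B$ is automatically controlled: whenever $\Phi_\la(\gamma_n(\theta))\ge c_\la-(\la-\la_n)$ one has
\[
B(\gamma_n(\theta))=\frac{\Phi_{\la_n}(\gamma_n(\theta))-\Phi_\la(\gamma_n(\theta))}{\la-\la_n}\le\frac{c_{\la_n}-c_\la}{\la-\la_n}+2\le D+3
\]
for $n$ large, since $(c_{\la_n}-c_\la)/(\la-\la_n)\to D$. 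A quantitative deformation argument — essentially a pseudogradient flow truncated to the slab $\{u\in X:\Phi_\la(u)\ge c_\la-1/n\}$, or equivalently Ekeland's variational principle applied in the path space — then produces $u_n\in X$ with $\Phi_\la(u_n)\to c_\la$, $\|\Phi'_\la(u_n)\|_{X^{-1}}\to 0$ and $B(u_n)\le D+4$. Since $A(u_n)=\Phi_\la(u_n)+\la B(u_n)$ is thereby bounded and either $A$ or $B$ is coercive in $\|u\|$, $\{u_n\}$ must remain bounded in $X$, which is $(a)$, and $(b)$ is built into the construction.

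The main obstacle I anticipate is the deformation step: one must guarantee that as paths are modified to extract the PS sequence, the modified pieces stay inside the high region where the $B$-bound is valid, and that the surgery respects the endpoints $\gamma(0)=v_1,\gamma(1)=v_2$. The strict mountain-pass hypothesis \eqref{cla} keeps the crest of every near-optimal $\gamma_n$ bounded away from $\{v_1,v_2\}$, which, combined with a pseudogradient flow cut off outside the slab $\{\Phi_\la\ge c_\la-1/n\}$, should handle both issues cleanly.
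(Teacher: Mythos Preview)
The paper does not supply its own proof of this proposition: it is quoted verbatim from Jeanjean's work \cite{Jeanjean2} and used as a black box. Your sketch is a faithful outline of the argument found there --- the monotonicity of $\la\mapsto c_\la$ from $B\ge 0$, Lebesgue's theorem to isolate points of differentiability, the algebraic bound on $B$ along near-optimal paths at such points, and a deformation/Ekeland step localized to the high slab --- so there is nothing to correct and nothing to compare against in the present paper.
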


Letting $T=[\delta,1]$, where $\delta\in (0,1)$ is a positive constant. To apply Proposition \ref{proposition}, we will introduce a family of $C^1$ functionals on $X=H^1(\R^3)$ with the form
\begin{equation}\label{IVK}
  I_{V,\lambda}(u)= \frac{1}{2}\int_{\R^3}|\nabla u|^2+V(x)|u|^2dx-\frac{\la}{10}\int_{\R^3} \phi_u |u|^5 dx
 -\la\int_{\R^3}F(u)dx.
\end{equation}
 Then let $I_{V,\lambda}(u)=A(u)-\lambda B(u)$, where
$$
A(u)=\frac{1}{2}\int_{\R^3}|\nabla u|^2+V(x)|u|^2dx\to+\infty \ \  \text{as}\ \ \|u\|\to+\infty,
$$
and
$$
B(u)=\frac{1}{10}\int_{\R^3} \phi_u |u|^5 dx
 +\int_{\R^3}F(u)dx\geq 0.
$$
It is clear that $I_{V, \lambda}$ is a well-defined $C^1$ functional on the space $H^1(\R^3)$, and for all $u,v\in H^1(\R^3)$, one has
\[
\langle I^{\prime}_{V, \lambda}(u),v\rangle= \int_{\R^3}\nabla u\nabla v+V(x)uvdx-\la\int_{\R^3} \phi_u|u|^3 uv dx
-\lambda\int_{\R^3}f(u)vdx.
\]
 \begin{lemma}\label{2Mountpass}
Under the assumptions of $(V_1)-(V_2)$, the function $I_{V, \lambda}$ possesses a Mountain-pass geometry, that is,
\begin{enumerate}[$(a)$]
  \item there exists a $v\in H^1(\R^3)\setminus\{0\}$ such that $I_{V, \lambda}(v)\leq 0$ for all $\lambda\in [\delta,1]$;
  \item $c_{\lambda}=\inf_{\gamma\in\Gamma}\sup_{\theta\in [0,1]}I_{V, \lambda}(\gamma(\theta))>\max\{I_{V, \lambda}(0),I_{V, \lambda}(v)\}$ for all $\lambda\in [\delta,1]$, where
$$
\Gamma=\{\gamma\in C([0,1],H^1(\R^3)):\gamma(0)=0, \gamma(1)=v\}.
$$
\end{enumerate}
\end{lemma}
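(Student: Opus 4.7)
The plan is to verify a standard mountain-pass geometry for $I_{V,\lambda}$, but with all the constants taken uniformly in $\lambda \in [\delta,1]$. The critical scaling of the nonlocal term together with the sign conditions $F \geq 0$ and $\phi_u \geq 0$ makes the argument rather routine.

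For part $(a)$, I would fix any nonnegative $u_0 \in C_0^\infty(\R^3) \setminus \{0\}$ and study the curve $t \mapsto I_{V,\lambda}(tu_0)$ for $t > 0$. Because $F(t) \geq 0$ by $(f_1)$, because $\phi_{u_0} \geq 0$ by Lemma \ref{phi}(1), and because $\lambda \geq \delta$, I obtain the uniform bound
\begin{equation*}
I_{V,\lambda}(tu_0) \;\leq\; \frac{t^2}{2}\|u_0\|^2 - \frac{\delta\,t^{10}}{10}\int_{\R^3} \phi_{u_0}|u_0|^5\,dx.
\end{equation*}
Lemma \ref{phi}(2) and $u_0 \not\equiv 0$ give $\int \phi_{u_0}|u_0|^5\,dx > 0$, so the $t^{10}$ term dominates and the right-hand side tends to $-\infty$ as $t \to \infty$. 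I pick $T > 0$ large enough that this upper bound is $\leq 0$ (and, anticipating $(b)$, that $\|T u_0\| > \rho$), and set $v := T u_0$. Then $I_{V,\lambda}(v) \leq 0$ for every $\lambda \in [\delta, 1]$.

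For part $(b)$, I would combine the growth estimate \eqref{growth}, the Sobolev embedding $H^1(\R^3) \hookrightarrow L^s(\R^3)$ for $s \in [2,6]$ (available because $\|\cdot\|$ is equivalent to the standard $H^1$ norm via $(V_2)$), and the bound \eqref{Sobolev2} to get, for every $\lambda \in [\delta, 1]$ and every $u \in H^1(\R^3)$,
\begin{equation*}
I_{V,\lambda}(u) \;\geq\; \tfrac{1}{2}\|u\|^2 - \tfrac{1}{10}S^{-6}\|u\|^{10} - C_1\epsilon\|u\|^2 - C_2 C_\epsilon \|u\|^6.
\end{equation*}
Choosing $\epsilon = 1/(4C_1)$ and then $\rho > 0$ small, this yields $I_{V,\lambda}(u) \geq \alpha > 0$ whenever $\|u\| = \rho$, uniformly in $\lambda \in [\delta, 1]$. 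Since $\|v\| > \rho$, every path $\gamma \in \Gamma$ must intersect the sphere $\{\|u\| = \rho\}$ by continuity, and therefore $c_\lambda \geq \alpha > 0 = \max\{I_{V,\lambda}(0), I_{V,\lambda}(v)\}$, which is the strict inequality required.

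I do not expect a serious obstacle; the only subtle point is the uniformity in $\lambda$, but this is essentially automatic. The negative contributions $-\lambda B(u)$ are largest in absolute value when $\lambda = 1$, which is exactly what gives the uniform lower bound near zero used in $(b)$, and they are smallest (but still nontrivial) when $\lambda = \delta$, which is exactly what pushes $I_{V,\lambda}(tu_0)$ to $-\infty$ in $(a)$. Assumption $(V_1)$ is not used in this lemma; $(V_2)$ enters only through the equivalence of norms that makes the Sobolev embeddings applicable to $\|\cdot\|$.
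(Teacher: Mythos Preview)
Your proposal is correct and follows essentially the same approach as the paper: for $(a)$ you bound $I_{V,\lambda}(tu_0)$ above by dropping $F\ge 0$ and using $\lambda\ge\delta$ on the nonlocal term to force $-\infty$, and for $(b)$ you use \eqref{growth} and \eqref{Sobolev2} to get a uniform lower bound on small spheres. If anything, your treatment of the $\lambda$-uniformity in $(a)$ is cleaner than the paper's, which replaces $-\tfrac{\lambda t^{10}}{10}$ by $-\tfrac{t^{10}}{10}$ (a harmless sloppiness since the conclusion only needs divergence to $-\infty$); you also correctly note that $(V_1)$ plays no role here.
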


\begin{proof}
$(a)$ For fixed $u\in H^1(\R^3)\setminus\{0\}$, and any $\lambda\in [\delta,1]$, by $(V_2)$
\[
I_{V, \lambda}(tu)\leq I_{V_\infty, \la}(u)\leq\frac{t^2}{2}\int_{\R^3}|\nabla u|^2dx+\frac{t^2}{2}\int_{\R^3}V_\infty|u|^2dx-\frac{t^{10}}{10}\int_{\R^3}\phi_u |u|^5 dx.
\]
Since $I_{V_\infty,\lambda}(tu)\to-\infty$ as $t\to+\infty$ and we can chose $v=t_0u\in H^1(\R^3)\setminus\{0\}$, then $I_{V,\lambda}(v)<0$ for $t_0$ large enough.

$(b)$ By means of \eqref{Sobolev}, \eqref{growth}, \eqref{Sobolev2} and the Sobolev theorem, one has
\[
I_{V, \lambda}(u)\geq \frac{1}{2}\|u\|^2-\frac{1 }{10S^6}\|u\|^{10}-\epsilon\|u\|^{2}-C_\epsilon\|u\|^{6}.
\]
Let $\epsilon=\frac{1}{4}$, then $I_{V, \lambda}(u)>0$ when $\|u\|=\rho>0$ small.
\end{proof}

Lemma \ref{2Mountpass} and the definition of $I_{V, \lambda}(u)$ imply that $I_{V, \lambda}(u)$ satisfies the assumptions of Proposition \ref{proposition} with $X=H^1(\R^3)$ and $\Phi_\lambda=I_{V, \lambda}$. So for $a.e.$ $\lambda\in [\delta,1]$, there exists a bounded sequence $\{u_n\}\subset H^1(\R^3)$ (here we denote $\{u_n(\lambda)\}$ by $\{u_n\}$ for simplicity)
such that
$$
I_{V, \lambda}(u_n)\to c_\lambda, \ \ I^{\prime}_{V, \lambda}(u_n )\to 0.
$$

By Theorem \ref{maintheorem2}, we infer that for any $\lambda\in [\delta,1]$, the associated limit problem
\begin{equation}\label{mainequation4}
  \left\{%
\begin{array}{ll}
    -\Delta u+V_\infty u- \phi |u|^3u=\lambda f(u), & x\in\R^3, \\
    -\Delta  \phi= |u|^5, &  x\in\R^3.\\
\end{array}%
\right.
\end{equation}
has a least energy solution in $H^1(\R^3)$, $i.e.$ for any $\lambda\in [\delta,1]$,
$$
m^\infty_\lambda\triangleq\inf \big\{   I^{\infty}_{\lambda}(w),\ \ w\in H^1(\R^3)\backslash\{0\} \ \  \text{and}
\ \   P^{\infty}_{\lambda}(w)=0   \big\}
$$
is attained by some $u^\infty_\lambda$, where
\[
I^{\infty}_{\lambda}(u)= \frac{1}{2}\int_{\R^3}|\nabla u|^2+V_\infty|u|^2dx-\frac{1}{10}\int_{\R^3}\phi_u |u|^5dx -\lambda\int_{\R^3}F(u)dx
\]
and
\[
 P^{\infty}_{\lambda}(u) \triangleq\frac{1}{2}\int_{\R^3}|\nabla u|^2dx+\frac{3V_\infty}{2}\int_{\R^3}|u|^2dx-\frac{1}{2}\int_{\R^3} \phi_u|u|^5dx
  -3\lambda\int_{\R^3}F(u)dx\equiv 0.
\]

\begin{lemma}\label{compares}
$c_\la<m_\la^\infty$, where $c_\la$ is given as Proposition \ref{proposition}.
\end{lemma}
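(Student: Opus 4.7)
The strategy is to use the positive least energy solution $u^\infty_\lambda$ of the limit problem \eqref{mainequation4} (which exists by Theorem \ref{maintheorem2} after rescaling to absorb $V_\infty$ and $\lambda$) to build a competitor path in the mountain-pass class for $I_{V,\lambda}$ whose top energy lies strictly below $m^\infty_\lambda$. The strict drop will come precisely from $(V_2)$: $V\leq V_\infty$ with strict inequality on a set of positive measure.

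First I would set $u_t(x):=u^\infty_\lambda(x/t)$ for $t>0$ and $u_0:=0$. Using the scaling identity $\phi_{u_t}(x)=t^2\phi_{u^\infty_\lambda}(x/t)$, a direct computation gives
\[
I_{V,\lambda}(u_t)-I^\infty_\lambda(u_t)=\frac{t^3}{2}\int_{\R^3}\bigl[V(tx)-V_\infty\bigr]|u^\infty_\lambda(x)|^2\wrt x.
\]
Since $u^\infty_\lambda>0$ everywhere (as in Step~3 of the proof of Theorem \ref{maintheorem2}) and the set $\{V<V_\infty\}$ has positive Lebesgue measure, this difference is strictly negative for every $t>0$. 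Next, mimicking Lemma \ref{compare} and using $P^\infty_\lambda(u^\infty_\lambda)=0$, I would obtain
\[
I^\infty_\lambda(u_t)=\Bigl(\frac{t}{2}-\frac{t^3}{6}\Bigr)\int_{\R^3}|\nabla u^\infty_\lambda|^2\wrt x+\Bigl(\frac{t^3}{6}-\frac{t^5}{10}\Bigr)\int_{\R^3}\phi_{u^\infty_\lambda}|u^\infty_\lambda|^5\wrt x,
\]
whose unique strict maximum on $[0,+\infty)$ is attained at $t=1$ with value $m^\infty_\lambda$, and which tends to $-\infty$ as $t\to+\infty$. Combining the two displays yields the pointwise bound $I_{V,\lambda}(u_t)<m^\infty_\lambda$ for every $t\geq 0$.

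To conclude, I would pick $T_0$ so large that $I_{V,\lambda}(u_{T_0})<0$ and consider the path $\gamma(\theta):=u_{\theta T_0}$ for $\theta\in[0,1]$. This $\gamma$ is continuous in $H^1(\R^3)$ (one checks $\|u_t\|\to 0$ as $t\to 0^+$), runs from $0$ to $u_{T_0}$, and satisfies $I_{V,\lambda}(\gamma(1))<0$, so on identifying the endpoint $v$ in Lemma \ref{2Mountpass} with $u_{T_0}$ it lies in the admissible class defining $c_\lambda$. By continuity on the compact interval $[0,1]$, the supremum of $I_{V,\lambda}\circ\gamma$ is attained at some $\theta_\star$, and the pointwise estimate above gives $I_{V,\lambda}(\gamma(\theta_\star))<m^\infty_\lambda$; hence $c_\lambda\leq \max_{\theta\in[0,1]}I_{V,\lambda}(\gamma(\theta))<m^\infty_\lambda$.

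The main obstacle I anticipate is exactly promoting the pointwise strict inequality $I_{V,\lambda}(u_t)<I^\infty_\lambda(u_t)$ to a strict inequality at the level of the maximum along the path, since a priori the gap is non-uniform in $t$; this is precisely what compactness of $[0,1]$ is used for. A secondary technical point is matching the specific endpoint $v$ appearing in Lemma \ref{2Mountpass}(b) with $u_{T_0}$, which is harmless because the mountain-pass value does not depend on the particular endpoint chosen within the sublevel set $\{I_{V,\lambda}<0\}$.
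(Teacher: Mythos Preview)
Your proposal is correct and follows essentially the same route as the paper's own proof: construct a path by dilations $t\mapsto u^\infty_\lambda(\cdot/t)$, use $P^\infty_\lambda(u^\infty_\lambda)=0$ to identify the strict maximum of $I^\infty_\lambda$ along the path at $t=1$ with value $m^\infty_\lambda$, and then invoke $(V_2)$ for the strict drop from $I^\infty_\lambda$ to $I_{V,\lambda}$. The only cosmetic difference is that the paper phrases the argument for an arbitrary $w$ with $P^\infty_\lambda(w)=0$ and then takes the infimum, whereas you work directly with the minimizer $u^\infty_\lambda$; you are also more explicit than the paper about why the pointwise strict inequality survives at the level of the maximum (attainment on the compact interval, with $I_{V,\lambda}(0)=0<m^\infty_\lambda$ handling the endpoint $t=0$) and about the endpoint-matching issue for $v$ in $\Gamma$, both of which the paper leaves implicit.
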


\begin{proof}
For any $w\in H^1(\R^3)\backslash\{0\}$ with $P^{\infty}_{\lambda}(w) =0$, then letting $w_t\triangleq w(\frac{x}{t})$ if $t>0$, and $w_t\triangleq w(x)$ if $t=0$. Hence we have that
\begin{eqnarray*}
I^{\infty}_{\lambda}(w_t)&=& I^{\infty}_{\lambda}(w_t)-\frac{t^3}{3}P^{\infty}_{\lambda}(w) \\
   &=&\bigg(\frac{t}{2}-\frac{t^3}{6}\bigg)\int_{\R^3}|\nabla w|^2dx+\bigg(\frac{t^3}{6}-\frac{t^5}{10}\bigg)\int_{\R^3} \phi_{w} |w|^5dx.
\end{eqnarray*}
Clearly, there exists a sufficiently large $T_0>0$ such that $I^{\infty}_{\lambda}(w_{T_0}) <0$ and $I^{\infty}_{\lambda}(w_t)$ achieves the strict global maximum at $t=1$, that is, $I^{\infty}_{\lambda}(w)=\max_{t>0}I^{\infty}_{\lambda}(w_t)$.

On the other hand, we set the path
$$
\gamma_0(t)=
\left\{%
\begin{array}{ll}
   w(\frac{x}{tT_0}) , & t>0, \\
    0, & t=0,\\
\end{array}%
\right.
$$
thus $\gamma_0(0)=0$, $I^{\infty}_{\lambda}(\gamma_0(1))=I^{\infty}_{\lambda}(w_{T_0}) <0$ and then $\gamma_0(t)\in \Gamma$, where $\Gamma$ is given by \eqref{Gamma}. As a consequence of this fact, we can infer that
\begin{eqnarray*}
I^{\infty}_{\lambda}(w)&=& \max_{t\geq0}I^{\infty}_{\lambda}(w_t)
=\max_{t\geq0}I^{\infty}_{\lambda}(w_{tT_0})=\max_{t\geq0}I^{\infty}_{\lambda}\big(\gamma_0(t)\big) \\
   &\geq& \max_{t\in[0,1]}I^{\infty}_{\lambda}\big(\gamma_0(t)\big)
\stackrel{\mathrm{(V_2)}}{>} \max_{t\in[0,1]}I_{V,K,\lambda}\big(\gamma_0(t)\big)  \stackrel{\mathrm{\eqref{Mountainpass1}}}{\geq} c_\la.
\end{eqnarray*}
 Since $w\in H^1(\R^3)\backslash\{0\}$ with $P^{\infty}_{\lambda}(w) =0$ is arbitrary, we have that $c_\la<m_\la^\infty$.
\end{proof}

\begin{lemma}\label{estimateVK}
For $q\in(3,5)$ with any $\mu>0$, or $q\in(1,3]$ with sufficiently large $\mu>0$, the Mountain-pass value given by \eqref{cla} satisfies
\[
   c_{\la}<\frac{2}{5}\la^{-\frac{1}{4}} S^{\frac{3}{2}}
\]
for any $\lambda\in[\delta,1]$ and $S$ is the best Sobolev constant given in \eqref{Sobolev}.
\end{lemma}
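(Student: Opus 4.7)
The plan is to adapt the classical Brezis--Nirenberg concentration argument, testing the Mountain-pass class with the truncated Aubin--Talenti family $v_\epsilon=\varphi U_\epsilon$ introduced above. First I would pick $t_0>0$ large enough that $I_{V,\lambda}(t_0 v_\epsilon)<0$ uniformly for $\lambda\in[\delta,1]$ (possible because the $-\lambda t^{10}/10\int\phi_{v_\epsilon}|v_\epsilon|^5$ term eventually dominates in $t$) and define the path $\gamma(\theta)=\theta t_0 v_\epsilon$, $\theta\in[0,1]$. Then $\gamma\in\Gamma$, and \eqref{cla} yields
\[
c_\lambda \le \max_{\theta\in[0,1]} I_{V,\lambda}(\gamma(\theta)) \le \max_{t\ge 0} I_{V,\lambda}(t v_\epsilon),
\]
so it suffices to estimate the right-hand side strictly below $\frac{2}{5}\lambda^{-1/4}S^{3/2}$.

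Next, I would isolate the three principal quantities
\[
A_\epsilon:=\|v_\epsilon\|^2,\qquad B_\epsilon:=\int_{\R^3}\phi_{v_\epsilon}|v_\epsilon|^5\wrt x,\qquad D_\epsilon:=|v_\epsilon|_{q+1}^{q+1},
\]
and control each via \eqref{estimate1}--\eqref{estimate2}. Using $(V_2)$ and $|v_\epsilon|_2^2=O(\epsilon)$ one gets $A_\epsilon=S^{3/2}+O(\epsilon)$. Since $-\Delta U_\epsilon=U_\epsilon^5$ gives $\phi_{U_\epsilon}\equiv U_\epsilon$, a Sobolev-duality estimate on $\psi_\epsilon:=\phi_{v_\epsilon}-U_\epsilon$, which solves $-\Delta\psi_\epsilon=(\varphi^5-1)U_\epsilon^5$, produces $\|\psi_\epsilon\|_{D^{1,2}}=O(\epsilon^{5/2})$, and hence $B_\epsilon=\int U_\epsilon^6\wrt x+o(\epsilon)=S^{3/2}+o(\epsilon)$. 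Combining these with the $(f_3)$ bound $\int F(tv_\epsilon)\ge\mu t^{q+1}D_\epsilon$, I obtain the controlling inequality
\[
I_{V,\lambda}(tv_\epsilon)\le \frac{t^2}{2}A_\epsilon-\frac{\lambda t^{10}}{10}B_\epsilon-\lambda\mu t^{q+1}D_\epsilon.
\]
The critical point of the two-term part alone is $t_*^{8}=A_\epsilon/(\lambda B_\epsilon)$, at which those two terms sum to $\frac{2}{5}(A_\epsilon^5/(\lambda B_\epsilon))^{1/4}=\frac{2}{5}\lambda^{-1/4}S^{3/2}+O(\epsilon)$. A routine compactness argument shows that the actual maximizer $t_{\max}$ of the right-hand side stays in a fixed compact subinterval of $(0,\infty)$ uniformly in small $\epsilon$, so the $F$-term contributes a genuine gain of order $\mu D_\epsilon$.

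The main obstacle is trading the $O(\epsilon)$ \emph{increase} of the principal part against the \emph{decrease} $\lambda\mu D_\epsilon t_{\max}^{q+1}$. From \eqref{estimate2}, $D_\epsilon$ is of order $\epsilon^{(5-q)/2}$ for $q\in(2,5)$, of order $\epsilon^{3/2}|\log\epsilon|$ for $q=2$, and of order $\epsilon^{(q+1)/2}$ for $q\in[1,2)$. When $q\in(3,5)$ one has $(5-q)/2<1$, so $D_\epsilon\gg\epsilon$ and for every $\mu>0$ the $F$-correction dominates the $O(\epsilon)$ error once $\epsilon$ is chosen small enough. When $q\in(1,3]$ the decrease is at best of order $\epsilon$, so shrinking $\epsilon$ alone is inadequate; instead I would fix $\epsilon$ (say $\epsilon=1$), discard the nonlocal term, and use the cruder bound
\[
\max_{t\ge 0}I_{V,\lambda}(tv_1)\le\max_{t\ge 0}\Big[\tfrac{t^2}{2}A_1-\lambda\mu t^{q+1}D_1\Big]=\frac{q-1}{2(q+1)}A_1\Big(\tfrac{A_1}{(q+1)\lambda\mu D_1}\Big)^{\!2/(q-1)},
\]
which tends to $0$ as $\mu\to\infty$ because $q>1$. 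Choosing $\mu$ sufficiently large pushes this below $\frac{2}{5}\lambda^{-1/4}S^{3/2}$ uniformly for $\lambda\in[\delta,1]$, completing both cases.
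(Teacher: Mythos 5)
Your proposal is correct and shares the paper's overall architecture: test the min--max class with the ray $\theta\mapsto\theta t_0v_\epsilon$, reduce to $\max_{t\ge0}I_{V,\lambda}(tv_\epsilon)$, peel off the two-term function $\frac{t^2}{2}A_\epsilon-\frac{\lambda t^{10}}{10}B_\epsilon$ whose maximum is $\frac{2}{5}\lambda^{-1/4}A_\epsilon^{5/4}B_\epsilon^{-1/4}=\frac{2}{5}\lambda^{-1/4}S^{3/2}+O(\epsilon)$, and beat the $O(\epsilon)$ error with the $(f_3)$ contribution. You diverge, legitimately, in two sub-steps. First, for the key lower bound on $B_\epsilon=\int_{\R^3}\phi_{v_\epsilon}|v_\epsilon|^5dx$ the paper tests the Poisson equation against $|v_\epsilon|$ and applies Young's inequality, $\int|v_\epsilon|^6=\int\nabla\phi_{v_\epsilon}\nabla|v_\epsilon|\le\frac12\int|\nabla\phi_{v_\epsilon}|^2+\frac12\int|\nabla v_\epsilon|^2$, to get $B_\epsilon\ge2|v_\epsilon|_6^6-|\nabla v_\epsilon|_2^2=S^{3/2}+O(\epsilon)$ directly from \eqref{estimate1}; you instead exploit the identity $\phi_{U_\epsilon}=U_\epsilon$ together with a duality bound on $\psi_\epsilon=\phi_{v_\epsilon}-U_\epsilon$ (valid since $\|(\varphi^5-1)U_\epsilon^5\|_{6/5}=O(\epsilon^{5/2})$), which is sharper — a two-sided $S^{3/2}+O(\epsilon^{5/2})$ — at the cost of one extra elliptic estimate; either suffices. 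Second, for $q\in(1,3]$ the paper keeps the concentration parameter alive and couples $\mu=\epsilon^{-1/2}$ so that $\mu|v_\epsilon|_{q+1}^{q+1}$ still dominates $O(\epsilon)$, whereas you freeze $\epsilon$, discard the nonnegative nonlocal term, and note that $\max_{t\ge0}\bigl[\frac{t^2}{2}A-\lambda\mu t^{q+1}D\bigr]\to0$ uniformly for $\lambda\ge\delta$ as $\mu\to\infty$; this is simpler and avoids the logarithmic bookkeeping at $q=2$. The one assertion you should expand is that the maximizer $t_{\max}$ lies in a fixed compact subinterval of $(0,\infty)$: the upper bound is routine, and for the lower bound it is cleanest to observe that if $t_{\max}\le t_1$ with $t_1^2<\frac{4}{5}$ then the whole maximum is at most $\frac{t_1^2}{2}A_\epsilon<\frac{2}{5}S^{3/2}\le\frac{2}{5}\lambda^{-1/4}S^{3/2}$ and the lemma holds trivially, so one may assume $t_{\max}\ge t_1$ and the gain $\lambda\mu t_{\max}^{q+1}D_\epsilon\ge\delta\mu t_1^{q+1}D_\epsilon$ is genuine — which is exactly the role of \eqref{2.1g} in the paper's version.
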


\begin{proof}
Firstly, we claim that there exist $t_1,t_2\in(0,+\infty)$ independent of $\epsilon,\lambda$ such that $\max_{t\geq 0}I_{V, \la}(tv_{\epsilon })=I_{V, \la}(t_\epsilon v_{\epsilon })$ and
\begin{equation}\label{2.1g}
  0<t_1<t_\epsilon<t_2<+\infty.
\end{equation}
Indeed, by the facts that $\lim_{t\to +\infty}I_{V, \la}(tv_{\epsilon })=-\infty$ and $(a)$ of Lemma \ref{2Mountpass}, there exists $t_\epsilon>0$ such that
$$
\max\limits_{t\geq 0}I_{V,\la}(tv_{\epsilon })=I_{V, \la}(t_\epsilon v_{\epsilon })>0\ \  \text{and}\ \ \frac{d}{dt}I_{V, \la}(tv_{\epsilon })=0
$$
which imply that
\begin{eqnarray}\label{2.1i}
 \nonumber
  I_{V, \la}(t_\epsilon v_{\epsilon }) &=& \frac{t_\epsilon^2}{2}\|v_{\epsilon }\|^2-\frac{\lambda t_\epsilon^{10}}{10} \int_{\R^3} \phi_{v_{\epsilon }}|v_{\epsilon }|^5dx-\la\int_{\R^3}F(t_\epsilon v_{\epsilon })dx \\
   &\stackrel{\mathrm{(f_1)}}{\leq}& \frac{t_\epsilon^2}{4}\|v_{\epsilon }\|^2-\frac{ t_\epsilon^{10}}{10} S^{-6}\|v_{\epsilon }\|^{10}
    \end{eqnarray}
and
\begin{eqnarray}\label{2.1h}
 \nonumber  t_\epsilon^2\|v_{\epsilon}\|^2&=&\lambda \int_{\R^3}f(t_\epsilon v_{\epsilon})t_\epsilon v_{\epsilon}dx+\la t_\epsilon^{10}\int_{\R^3}\phi_{v_{\epsilon}}|v_{\epsilon}|^5dx \\
   &\geq&  \delta\mu t_\epsilon^{q+1}|v_{\epsilon}|_{q+1}^{q+1}+\delta t_\epsilon^{10}\int_{\R^3}\phi_{v_{\epsilon}}|v_{\epsilon}|^5dx.
\end{eqnarray}
 The fact $I_{V,\la}(t_\epsilon v_{\epsilon})>0$ yields that $t_\epsilon$ is bounded from below by \eqref{2.1i}. And since $q+1>2$, then it follows from \eqref{2.1h} that $t_\epsilon$ is bounded from above.
  Hence \eqref{2.1g} is true.

Let us define
 \begin{eqnarray*}
g(t) &:=& \frac{t^2}{2}\int_{\R^3}|\nabla v_{\epsilon }|^2dx-\frac{ \la t^{10}}{10}\int_{\R^3} \phi_{v_{\epsilon}}|v_{\epsilon }|^5dx \\
   &:=& C_1t^2-C_2t^{10},
\end{eqnarray*}
where
$$
C_1=\frac{1}{2}\int_{\R^3}|\nabla v_{\epsilon}|^2dx,\ \ C_2=\frac{\la}{10}\int_{\R^3} \phi_{v_{\epsilon}}|v_{\epsilon }|^5dx.
$$
By some elementary calculations, we have
\begin{equation}\label{ggggg}
  \max_{t\geq 0}g(t) = \frac{4(C_1)^{\frac{5}{4}}}{5(5C_2)^{{\frac{1}{4}}}}
   = \frac{2}{5}\la^{-{\frac{1}{4}}}\frac{   \Big( \int_{\R^3}|\nabla v_{\epsilon}|^2dx \Big)^{  \frac{5}{4}    }    }      { \Big(  \int_{\R^3}\phi_{v_{\epsilon}}|v_{\epsilon}|^5dx  \Big)^{  \frac{1}{4}    }       }.
\end{equation}
The Poisson equation $-\Delta \phi_{v_\epsilon}=|v_{\epsilon}|^5$ and Cauchy's inequality give
\begin{align*}
 \int_{\R^3} |v_{\epsilon}|^6dx &=  \int_{\R^3}\nabla \phi_{v_\epsilon} \nabla |v_{\epsilon}| dx
   \leq \frac{1}{2} \int_{\R^3}|\nabla \phi_{v_\epsilon}|^2dx+ \frac{1}{2} \int_{\R^3}|\nabla {v_{\epsilon}}|^2dx\\
   &=\frac{1}{2} \int_{\R^3}\phi_{v_{\epsilon}}|{v_{\epsilon}}|^5dx+ \frac{1}{2} \int_{\R^3}|\nabla {v_{\epsilon}}|^2dx
\end{align*}
which implies that
\begin{eqnarray*}
\int_{\R^3}\phi_{v_{\epsilon}}|{v_{\epsilon}}|^5dx &\geq& 2\int_{\R^3} |v_{\epsilon}|^6dx- \int_{\R^3}|\nabla {v_{\epsilon}}|^2dx \\
&\stackrel{\mathrm{\eqref{estimate1}}}{=}&S^{\frac{3}{2}}+O(\epsilon).\ \ \ \
\end{eqnarray*}
As a consequence of the above fact, one has
\begin{equation}\label{gggggggggggg}
 \max_{t\geq 0}g(t)\stackrel{\mathrm{\eqref{ggggg}}}{\leq} \frac{2}{5}\la^{-\frac{1}{4}}\frac   { \Big(S^{\frac{3}{2}}+O(\epsilon)\Big)^{   \frac{5}{4}  }   }
{  \Big(S^{\frac{3}{2}}+O(\epsilon)\Big)^{   \frac{1}{4}  }   }
=\frac{2}{5}\la^{-\frac{1}{4}}S^{\frac{3}{2}}+O(\epsilon).
\end{equation}
On the other hand, for $\epsilon>0$ with $\epsilon<1$ we have
 \begin{eqnarray}\label{ggggggggggggg}
\nonumber  \la \int_{\R^3}F(tv_{\epsilon})dx &\stackrel{\mathrm{(f_3)}}{\geq }& \delta\mu t_\epsilon^{q+1}\int_{\R^3}|v_{\epsilon}|^{q+1}dx
    \stackrel{\mathrm{\eqref{2.1g}}}{\geq } C\mu \int_{\R^3}|v_{\epsilon}|^{q+1}dx\\
    &\stackrel{\mathrm{\eqref{estimate2}}}{=}&\left\{
  \begin{array}{ll}
   C\mu O(\epsilon^{\frac{q+1}{2}}), & \text{if}~q\in[1,2), \\
    C\mu O(\epsilon^{\frac{3}{2}}|\log \epsilon|), & \text{if}~q=2, \\
    C\mu O(\epsilon^{\frac{5-q}{2}}), &   \text{if}~q\in(2,5).
  \end{array}
\right.
\end{eqnarray}
 We have proved $\max_{t\geq 0}I(tv_\epsilon)=I(t_\epsilon v_\epsilon)$ at the beginning, that is,
\begin{eqnarray}\label{zzzzz}
\nonumber \max_{t\geq 0}I(tv_{\epsilon})&=& \frac{t_\epsilon^2}{2}\int_{\R^3}|\nabla v_{\epsilon}|^2+V(x)|v_{\epsilon}|^2dx
-\frac{\la t_\epsilon^{10}}{10}\int_{\R^3}\phi_{v_{\epsilon}}|v_{\epsilon}|^5dx\\
\nonumber &&-\la \int_{\R^3}F(tv_{\epsilon})dx  \\
\nonumber    &\stackrel{\mathrm{(V_2)}}{\leq}&g(t_\epsilon)+\frac{V_\infty t_\epsilon^2}{2}|v_{\epsilon}|_2^2- \la\int_{\R^3}F(tv_{\epsilon})dx \\
   &\leq&  \frac{2}{5}\la^{-\frac{1}{4}}S^{\frac{3}{2}}+CO(\epsilon)-\la \int_{\R^3}F(tv_{\epsilon})dx,
\end{eqnarray}
where we have used \eqref{estimate2}, \eqref{2.1g} and \eqref{gggggggggggg} in the last inequality.

Using \eqref{ggggggggggggg}, one has
\[
CO(\epsilon)-\la \int_{\R^3}F(tv_{\epsilon})dx\leq CO(\epsilon)-
\left\{
  \begin{array}{ll}
 C\mu  O(\epsilon^{\frac{q+1}{2}}), & \text{if}~q\in[1,2), \\
   C\mu  O(\epsilon^{\frac{3}{2}}|\log \epsilon|), & \text{if}~q=2, \\
   C\mu  O(\epsilon^{\frac{5-q}{2}}), &   \text{if}~q\in(2,5).
  \end{array}
\right.
 \]

If $q\in(3,5)$, then $0<\frac{5-q}{2}<1$ and for sufficiently small $\epsilon>0$
\[
CO(\epsilon)- \mu\int_{\R^3}F(tv_{\epsilon})dx\leq CO(\epsilon)-C\mu O(\epsilon^{\frac{5-q}{2}})<0
\]
for any $\mu>0$.

If $q\in (2,3]$ and $\mu=\epsilon^{-\frac{1}{2}}$, then $\frac{1}{2}\leq\frac{5-q}{2}-\frac{1}{2}<1$ and hence for sufficiently small $\epsilon>0$
\[
CO(\epsilon)- \mu\int_{\R^3}F(tv_{\epsilon})dx\leq CO(\epsilon)-C\epsilon^{-\frac{1}{2}} O(\epsilon^{\frac{5-q}{2}})<0
\]
when $\mu>0$ is sufficiently large.

If $q=2$ and $\mu=\epsilon^{-\frac{1}{2}}$, then for sufficiently small $\epsilon>0$
\[
CO(\epsilon)-\mu\int_{\R^3}F(tv_{\epsilon})dx\leq CO(\epsilon)-C\epsilon^{-\frac{1}{2}} O(\epsilon^{\frac{3}{2}}|\log \epsilon|)<0
\]
when $\mu>0$ is sufficiently large.

If $q\in (1,2)$ and $\mu=\epsilon^{-\frac{1}{2}}$, then $\frac{1}{2}<\frac{q+1}{2}-\frac{1}{2}<1$ and hence for sufficiently small $\epsilon>0$
\[
CO(\epsilon)-\mu\int_{\R^3}F(tv_{\epsilon})dx\leq CO(\epsilon)-C\epsilon^{-\frac{1}{2}} O(\epsilon^{\frac{q+1}{2}})<0
\]
when $\mu>0$ is sufficiently large.

Consequently, we have showed that for $q\in(3,5)$ with any $\mu>0$, or $q\in(1,3]$ with sufficiently large $\mu>0$
\[
CO(\epsilon)- \int_{\R^3}F(tv_{\epsilon})dx<0
 \]
which indicates that $c_\la<\frac{2}{5}\la^{-\frac{1}{4}}S^{\frac{3}{2}}$ by \eqref{compare} and \eqref{zzzzz}.
\end{proof}

A bounded $(PS)_{c_\lambda}$ is constructed and to get a critical point of the functional $I_{V, \lambda}$, the
following version of a global compactness lemma is necessary to prove that the functional $I_{V, \lambda}$ satisfies $(PS)_{c_\lambda}$ condition for $a.e.$ $\lambda\in [\delta,1]$.

\begin{lemma}\label{global}
Assume that $(V_1)-(V_2)$ and $(f_1)-(f_3)$ hold, for any $\lambda \in [\delta,1]$ and let $\{u_n\}$ be a bounded $(PS)_{c_\lambda}$ sequence for the functional $I_{V, \lambda}$ with
$$
c_\la<\frac{2}{5}\la^{-\frac{1}{4}} S^{\frac{3}{2}}.
$$
Then there exist a subsequence of $\{u_n\}$ still denoted by itself, an integer $k\in \mathbb{N}\cup\{0\}$, a sequence $\{y^i_n\}\subset\R^3$, and $w_i\in H^1(\R^3)$ for $i\in\{1,2,\cdots,k\}$ such that
\begin{enumerate}[$(a)$]
  \item  $u_n\rightharpoonup u_0$ with $I^{\prime}_{V, \lambda}(u_0)=0$;
  \item  $|y^i_n|\to +\infty$ and $|y^i_n-y^j_n|\to+\infty$ if $i\neq j$;
  \item  $w_i\neq 0$ and $(I^{\infty}_{\lambda})^{\prime}(w_i)=0$ for $i\in\{1,2,\cdots,k\}$;
  \item  $\|u_n-u_0-\sum_{i=1}^{i=k}w_i(\cdot-y^i_n)\|\to 0$;
  \item  $I_{V, \lambda}(u_n)\to I_{V, \lambda}(u_0)+ \sum_{i=1}^{i=k}I_{\lambda}^\infty(w_i)$.
\end{enumerate}
\end{lemma}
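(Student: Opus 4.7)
The strategy is a Struwe--Lions profile decomposition adapted to the critical nonlocal term, extracting bubbles one at a time via translations going to infinity, with the strict energy bound $c_\lambda<\tfrac{2}{5}\lambda^{-1/4}S^{3/2}$ ruling out scaling-type bubbles. First I would extract a weak limit $u_n\rightharpoonup u_0$ in $H^1(\R^3)$ and, using the convergence \eqref{weak4} for the nonlocal term together with a Vitali-type argument for $f(u_n)\varphi$ (exactly as in Step~2 of the proof of Theorem~\ref{maintheorem2}), pass to the limit in $\langle I'_{V,\lambda}(u_n),\varphi\rangle=0$ for every $\varphi\in C_0^\infty(\R^3)$, obtaining $I'_{V,\lambda}(u_0)=0$ and proving~(a). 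If $u_n\to u_0$ strongly in $H^1(\R^3)$ the decomposition terminates with $k=0$.

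Otherwise, set $v^1_n:=u_n-u_0$ and apply the splittings \eqref{weak1}--\eqref{weak3} together with the usual Br\'{e}zis--Lieb lemma for $\int F$ (justified by \eqref{growth}) to derive the additive decompositions of the energy, the nonlocal term and the derivative. To replace $V$ by $V_\infty$ in these identities one uses the standard trick of splitting $\int(V-V_\infty)|v^1_n|^2\,dx$ into a piece on $B_R$ that vanishes by local compactness and a tail piece on $\R^3\setminus B_R$ that is small by $(V_2)$. This shows that $\{v^1_n\}$ is a bounded $(PS)$-type sequence for $I^\infty_\lambda$ at level $c_\lambda-I_{V,\lambda}(u_0)$. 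Now apply Lemma~\ref{Vanishing} to $\{v^1_n\}$. If vanishing holds, $(f_1)$--$(f_2)$ force $\int f(v^1_n)v^1_n\,dx\to 0$, and combining the $(PS)$ identity with \eqref{Sobolev2} yields either $v^1_n\to 0$ strongly in $H^1(\R^3)$ (ending the process) or $\|v^1_n\|^2\to\ell\geq\lambda^{-1/4}S^{3/2}$; the latter gives $c_\lambda-I_{V,\lambda}(u_0)\geq\tfrac{2}{5}\lambda^{-1/4}S^{3/2}$, contradicting the threshold once one notes $I_{V,\lambda}(u_0)\geq 0$, which is obtained from $I_{V,\lambda}(u_0)-\tfrac{1}{3}P_V(u_0)$ via $(V_1)$ and the Sobolev inequality. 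If concentration holds, find $\{y^1_n\}$ with $\int_{B_r(y^1_n)}|v^1_n|^2\,dx\geq\eta>0$; since $v^1_n\rightharpoonup 0$ we must have $|y^1_n|\to\infty$, and the translated sequence $v^1_n(\cdot+y^1_n)$ admits a nontrivial weak limit $w_1$ satisfying $(I^\infty_\lambda)'(w_1)=0$ (passing to the limit uses $V(\cdot+y^1_n)\to V_\infty$).

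The iteration replaces $v^1_n$ by $v^{j+1}_n:=u_n-u_0-\sum_{i=1}^{j}w_i(\cdot-y^i_n)$; repeated application of \eqref{weak1}--\eqref{weak3} and of the $V$-splitting argument shows that $v^{j+1}_n$ is again a bounded $(PS)$-type sequence for $I^\infty_\lambda$ at level $c_\lambda-I_{V,\lambda}(u_0)-\sum_{i=1}^{j}I^\infty_\lambda(w_i)$, and Lemma~\ref{Vanishing} is re-applied, producing either a new bubble $w_{j+1}$ with $|y^{j+1}_n-y^i_n|\to\infty$ for $i\leq j$ (else the limiting mass would be absorbed by a previous $w_i$) or strong convergence. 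Each extracted $w_i$ satisfies $I^\infty_\lambda(w_i)\geq m^\infty_\lambda>0$ by the Poho\v{z}aev identity for $I^\infty_\lambda$ (arguing as in Lemma~\ref{compare}), so the monotone decrease of the residual level forces termination after finitely many steps $k$, yielding (b)--(e).

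The main obstacle, and the very reason for the strict energy bound, is to rule out bubbles of \emph{scaling} type $\varepsilon_n^{-1/2}U(\cdot/\varepsilon_n)$ which are natural candidates because of the critical nonlocal term: by Lemma~\ref{phi}(2) and \eqref{Sobolev2} any such bubble would contribute at least $\tfrac{2}{5}\lambda^{-1/4}S^{3/2}$ to $\lim I_{V,\lambda}(u_n)$, and the strict upper bound precludes this, guaranteeing that only translation-type bubbles arise. Carefully tracking, at each iteration, the energy identity (e) and the norm identity (d) through the nonlocal Br\'{e}zis--Lieb identity \eqref{weak3} is the delicate technical point that makes the scheme consistent.
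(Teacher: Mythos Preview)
Your proposal is correct and follows essentially the same Struwe--Lions iteration as the paper: extract the weak limit $u_0$ and verify $I'_{V,\lambda}(u_0)=0$, run the vanishing/concentration dichotomy on the remainder $u_n-u_0$, use the threshold $c_\lambda<\tfrac25\lambda^{-1/4}S^{3/2}$ together with $I_{V,\lambda}(u_0)\ge 0$ (from $I_{V,\lambda}(u_0)-\tfrac13 P_{V,\lambda}(u_0)$ and $(V_1)$) to force strong convergence in the vanishing case, and in the concentration case translate to produce a nontrivial critical point $w_i$ of $I^\infty_\lambda$ via $V(\cdot+y^i_n)\to V_\infty$, then iterate. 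The only noteworthy difference is the termination argument: the paper stops the iteration by the additive norm identity $\|u_n\|^2=\|u_0\|^2+\sum_i\|w_i\|^2+\|v^{k+1}_n\|^2+o(1)$ and boundedness of $\{u_n\}$, whereas you invoke $I^\infty_\lambda(w_i)\ge m^\infty_\lambda>0$ so that only finitely many bubbles fit under $c_\lambda$; both are valid and standard. Your remark about ``scaling-type bubbles'' is a harmless heuristic---in the actual argument (yours and the paper's) the threshold is used only in the vanishing alternative to exclude $\ell\ge\lambda^{-1/4}S^{3/2}$, not to rule out an explicitly extracted concentrating profile.
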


\begin{proof}
Since $\{u_n\}$ is bounded in $H^1(\R^3)$, going to a subsequence if necessary $u_n\rightharpoonup u_0$ and we have $I^{\prime}_{V, \lambda}(u_0)=0$ as the Step 2 in the proof of Theorem \ref{maintheorem2}. On the other hand we claim that $I_{V, \lambda}(u_0)\geq 0$. In fact, since $I^{\prime}_{V, \lambda}(u_0)=0$, then $P_{V, \la}(u_0)=0$, where $P_{V, \la}(u)$ is given as similar to \eqref{Pohozaev1}. To simply the calculations, let us
introduce the following notations:
\begin{equation}\label{simple121}
 \alpha\triangleq\int_{\R^3}|\nabla u_0|^2dx,\,\ \beta\triangleq\int_{\R^3}V(x)|u_0|^2dx,\ \ \overline{\beta}\triangleq\int_{\R^3}(x,\nabla V)|u_0|^2dx,
\end{equation}
\begin{equation}\label{simple122}
  \delta\triangleq\la\int_{\R^3} \phi_u|u|^5dx,
  \ \ \kappa\triangleq \la\int_{\R^3}F(u)dx,
\end{equation}
hence we have
\begin{equation}\label{simple123}
  \left\{
  \begin{array}{ll}
  I_{V, \lambda}(u_0)=\frac{1}{2} \alpha+\frac{1}{2}\beta-\frac{1}{10}\delta-\kappa,\\
P_{V, \lambda}(u_0)=\frac{1}{2} \alpha+\frac{3}{2}\beta+\frac{1}{2}\overline{\beta}-\frac{1}{2}\delta
-3\kappa=0.
  \end{array}
\right.
\end{equation}
By means of the H\"{o}lder inequality, \eqref{Sobolev} and the assumption $(V_1)$, one has
\begin{eqnarray}\label{simple124}
\nonumber|\overline{\beta}|&=&\bigg|\int_{\R^3}(x,\nabla V)|u_0|^2dx\bigg| \leq \bigg(\int_{\R^3}|(x,\nabla V)|^{\frac{3}{2}}\bigg)^{\frac{2}{3}}  \bigg(\int_{\R^3}|u_0|^{6}\bigg)^{\frac{1}{3}}\\
  &\leq& S S^{-1}\int_{\R^3}|\nabla u_0|^2dx= \alpha.
\end{eqnarray}
It follows from \eqref{simple121}-\eqref{simple124} that
\begin{eqnarray}\label{gggggg}
\nonumber  I_{V, \lambda}(u_0)&=&I_{V, \lambda}(u_0)- \frac{1}{3}P_{V, \lambda}(u_0)\\
 &=&\frac{1}{3}\alpha-\frac{1}{6}\overline{\beta}+\frac{1}{15}\delta
 \geq  \frac{1}{6} \alpha,
\end{eqnarray}
which yields that $I_{V,\lambda}(u_0)\geq 0$.

Let us set $u_n^1\triangleq u_n-u_0$, then $u_n^1\rightharpoonup 0$ in $H^1(\R^3)$. Hence it follows from \eqref{weak3} and Br\'{e}zis-Lieb lemma \cite{Brezis2} that
 \begin{equation}\label{strong1}
\left\{%
\begin{array}{ll}
  \displaystyle   \|u_n\|^2-\|u_n^1\|^2-\|u_0\|^2\to 0, \\
 \displaystyle    \int_{\R^3} \phi_{u_n}|u_n|^5dx-\int_{\R^3} \phi_{u_n^1}|u_n^1|^5dx-\int_{\R^3} \phi_{u_0}|u_0|^5dx\to 0,\\
 \displaystyle   \ \   \int_{\R^3}F(u_n)dx-\int_{\R^3}F(u_n^1)dx-\int_{\R^3}F(u_0)dx\to0,\\
\end{array}%
\right.
\end{equation}
which reveals that
\begin{equation}\label{strong2}
  I_{V, \la}(u_n)-I_{V, \la}(u_n^1)-I_{V, \la}(u_0)\to 0
\end{equation}
and
\begin{equation}\label{strong3}
  \langle I_{V, \la}^{\prime}(u_n),u_n\rangle-\langle I_{V, \la}^{\prime}(u_n^1),u_n^1\rangle-\langle I_{V, \la}^{\prime}(u_0),u_0\rangle\to 0.
\end{equation}
 Define
\[
\sigma\triangleq \mathop{\lim\sup}_{n\to\infty}\sup_{y\in\R^3}\int_{B_1(y)}|u_n^1|^2dx\geq 0.
\]
We will consider the cases $\sigma=0$ and $\sigma>0$, respectively.
\vskip0.3cm
\underline{\textbf{Case 1.}}\ \   $\sigma=0$.
\vskip0.3cm
\noindent Using Lemma \ref{Vanishing}, we have that $u_n^1\to 0$ in $L^p(\R^3)$ for any $2<p<6$. Thus using Strass's compactness lemma in \cite{Berestycki} and the assumptions $(f_1)-(f_2)$ again, one has
\[
\int_{\R^3}F(u_n^1)dx\to 0\ \ \text{and}\ \ \int_{\R^3}u_n^1f(u_n^1)dx\to 0.
\]
By means of \eqref{strong2}-\eqref{strong3} and the above formulas, we have the following results at once:
\[
c_\la-I_{V, \la}(u_0)+o(1)=\frac{1}{2}\|u_n^1\|^2-\frac{\la}{10}\int_{\R^3} \phi_{u_n^1}|u_n^1|^5dx
\]
and
\[
o(1)=\|u_n^1\|^2-\la\int_{\R^3} \phi_{u_n^1}|u_n^1|^5dx.
\]
Hence without loss of generality, we may assume
\[
\lim_{n\to\infty}\|u_n^1\|^2=\lim_{n\to\infty}\la\int_{\R^3} \phi_{u_n^1}|u_n|^5dx=l^1,\ \  \text{and}\ \ c_\la-I_{V ,\la}(u_0)=\frac{2}{5}l^1.
\]
On the other hand, by \eqref{Sobolev2} we can deduce that
\[
\la\int_{\R^3} \phi_{u_n^1}|u_n^1|^5dx\leq \la  S^{-6}\|u_n^1\|^{10}
\]
which implies that $l^1\leq \la S^{-6}(l^1)^5$. Hence either $l^1=0$ or $l^1\geq \la^{-\frac{1}{4}} S^{\frac{3}{2}}$. If we suppose $l^1\geq \la^{-\frac{1}{4}} S^{\frac{3}{2}}$, then in view of the fact that $I_{V,\la}(u_0)\geq 0$, we can infer
\[
c_\la\geq c_\la-I_{V,\la}(u_0) \geq \frac{2}{5} \la^{-\frac{1}{4}} S^{\frac{3}{2}}
\]
which yields a contradiction to Lemma \ref{estimateVK}. Thus $l^1=0$, $i.e.$ $\lim_{n\to\infty}\|u_n^1\|\to0$.
\vskip0.3cm
\underline{\textbf{Case 2.}}\ \   $\sigma>0$.
\vskip0.3cm
\noindent We may assume that there exists $y_n^1\in\R^3$ such that
\begin{equation}\label{strong4}
\int_{B_1(y_n^1)}|u_n^1|^2dx>\frac{\sigma}{2}> 0.
\end{equation}
Let's define $\widetilde{u}_n^1(x)=u_n^1(x+y_n^1)$ and clearly $\{\widetilde{u}_n^1\}$ is bounded in $H^1(\R^3)$ and we may suppose that $\widetilde{u}_n^1\rightharpoonup w_1$ in $H^1(\R^3)$, $\widetilde{u}_n^1\to w_1$ in $L_{\text{loc}}^p(\R^3)$ and $\widetilde{u}_n^1\to w_1$ $a.e.$ in $\R^3$. As a consequence of \eqref{strong4}, we get
\[
\int_{B_1(0)}|w_1|^2dx>\frac{\sigma}{2}> 0
\]
which implies that $w_1\neq 0$. Since $u_n^1\rightharpoonup 0$ in $H^1(\R^3)$, then $\{y_n^1\}$ must be unbounded. Going to a subsequence if necessary, $|y_n^1|\to+\infty$.

We now show that $(I^{\infty}_{\lambda})^{\prime}(w_1)=0$.  In fact, it is easy to show that
\begin{equation}\label{Feb1}
 \big\langle(I^{\infty}_{\lambda})^{\prime}(\widetilde{u}_n^1),\varphi\big\rangle-  \big\langle(I^{\infty}_{\lambda})^{\prime}(w_1),\varphi\big\rangle\to 0
\end{equation}
for any $\varphi\in C_0^\infty(\R^3)$. Since $u_n^1\rightharpoonup 0$ in $H^1(\R^3)$, we obtain
\[
\big\langle I^{\prime}_{V,\lambda}({u}_n^1),\varphi(\cdot-y_n^1)\big\rangle-  \big\langle I_{V,\lambda}^{\prime}(0),\varphi(\cdot-y_n^1)\big\rangle\to 0
\]
which yields that
\begin{equation}\label{Feb2}
  \big\langle I^{\prime}_{V,\lambda}({u}_n^1),\varphi(\cdot-y_n^1)\big\rangle\to 0.
\end{equation}
 Using $(V_2)$ and $|y_n^1|\to+\infty$, one has
 \begin{equation}\label{Feb3}
    \int_{\R^3}V(x+y_n^1)\widetilde{u}_n^1(x)\varphi(x)dx-\int_{\R^3}V_\infty\widetilde{u}_n^1(x)\varphi(x)dx\to 0.
 \end{equation}
Combing \eqref{Feb2} and \eqref{Feb3}, one has
\[
\big\langle(I^{\infty}_{\lambda})^{\prime}(\widetilde{u}_n^1),\varphi\big\rangle\to 0
\]
which implies that $\big\langle(I^{\infty}_{\lambda})^{\prime}(w_1),\varphi\big\rangle=0$ for any $\varphi\in C_0^\infty(\R^3)$ together with \eqref{Feb1}.

In view of $(V_2)$ and the locally compactness of Sobolev embedding, one has
\begin{equation}\label{Feb33}
  \int_{\R^3}\big[V_\infty-V(x)\big]|u_n^1|^2dx=
\int_{\R^3}\big[V_\infty-V(x)\big]|u_n-u_0|^2dx
\to 0.
\end{equation}
Hence it follows from \eqref{strong1} and \eqref{strong2} that
\begin{equation}\label{Feb4}
  I_{V,\la}(u_n)-I_{V,\la}(u_0)-I_{\la}^\infty(u_n^1)\to 0.
\end{equation}

Let us set $u^2_n(\cdot)\triangleq u^1_n(\cdot)-w_1(\cdot-y^1_n)$, then $u^2_n\rightharpoonup 0$ in $H^1(\R^3)$. Thus it follows from \eqref{weak3} and Br\'{e}zis-Lieb lemma \cite{Brezis2} that
 \begin{equation}\label{Feb5}
\left\{%
\begin{array}{ll}
  \displaystyle   \|u_n^2\|^2-\|u_n\|^2-\|u_0\|^2-\|w_1(\cdot-y_n^1)\|\to 0, \\
 \displaystyle    \int_{\R^3} \phi_{u_n^2}|u_n^2|^5dx-\int_{\R^3} \phi_{u_n}|u_n|^5dx-\int_{\R^3} \phi_{u_0}|u_0|^5dx-\int_{\R^3} \phi_{w_1}|w_1|^5dx\to 0,\\
 \displaystyle   \ \   \int_{\R^3}F(u_n^2)dx-\int_{\R^3}F(u_n)dx-\int_{\R^3}F(u_0)dx-\int_{\R^3}F(w_1(\cdot-y_n^1))dx\to0,\\
\end{array}%
\right.
\end{equation}
\begin{equation}\label{Feb6}
  \begin{gathered}
  \int_{\R^3} \phi_{u_n^2}|u_n^2|^3u_n^2\varphi dx-\int_{\R^3} \phi_{u_n}|u_n|^3u_n\varphi dx-\int_{\R^3} \phi_{u_0}|u_0|^3u_0\varphi dx\hfill\\
  -\int_{\R^3} \phi_{w_1(x-y_n^1)}|w_1(x-y_n^1)|^3w_1(x-y_n^1)\varphi dx \to 0
  \end{gathered}
\end{equation}
and
\begin{equation}\label{Feb7}
  \begin{gathered}
  \int_{\R^3} V(x)|u_n^2|^2 dx-\int_{\R^3} V(x)|u_n|^2 dx-\int_{\R^3}V(x)|u_0|^2 dx\hfill\\
  -\int_{\R^3} V(x)|w_1(x-y_n^1)|^2 dx \to 0.
  \end{gathered}
\end{equation}
By means of \eqref{Feb5}, \eqref{Feb6} and \eqref{Feb7}, we derive
 \begin{equation}\label{Feb8}
\left\{%
\begin{array}{ll}
  \displaystyle   I_{V,\la}(u_n^2)-I_{V,\la}(u_n)-I_{V,\la}(u_0)-I_{V,\la}(w_1)\to 0, \\
 \displaystyle    I_\la^\infty(u_n^2)-I_{V,\la}(u_n^1)-I_\la^\infty(w_1)\to 0,\\
 \displaystyle   \ \   \big\langle I^{\prime}_{V,\lambda}({u}_n^2),\varphi\big\rangle-\big\langle I^{\prime}_{V,\lambda}({u}_n),\varphi\big\rangle-\big\langle I^{\prime}_{V,\lambda}({u}_0),\varphi\big\rangle-\big\langle (I_{\lambda}^\infty)^{\prime}({w}_1),\varphi\big\rangle
 \to 0.\\
\end{array}%
\right.
\end{equation}
Thus by using \eqref{Feb33}, \eqref{Feb4} and \eqref{Feb8}, we have
\begin{align*}
   I_{V,\la}(u_n)&= I_{V,\la}(u_0)+I_\la^\infty(u_n^1) +o(1)\\
   & =I_{V,\la}(u_0)+I_\la^\infty(w_1)+I_\la^\infty(u^2_n)+o(1).
\end{align*}
Recalling that $I_\la^\infty(w_1)\geq 0$ and $I_{V,\la}(u_0)\geq 0$, then we can conclude that
\[
I_{V,\la}(u_n^2)=I_{V,\la}(u_n)-I_{V,\la}(u_0)-I_\la^\infty(w_1)+o(1)\leq c_\la<\frac{2}{5}\la^{-\frac{1}{4}}S^{\frac{3}{2}}.
\]
Using the same arguments as before, let
\[
\sigma_1\triangleq\mathop{\lim\sup}_{n\to\infty}\sup_{y\in\R^3}\int_{B_1(y)}|u_n^2|^2dx\geq 0.
\]

If $\sigma_1=0$, then $\|u_n^2\|\to 0$, that is, $\|u_n-u_0-w_1(\cdot-y_n^1)\|\to 0$ and hence the proof is complete with $k=1$.

If $\sigma_1>0$, then there exists a sequence $\{y_n^2\}\subset\R^3$ and $w_2\in H^1(\R^3)$ such that $\widetilde{u}_n^2(x)\triangleq u_n^2(x+y_n^2)\rightharpoonup w_2$ in $H^1(\R^3)$. By \eqref{Feb8}, one has $(I_\la^\infty)^\prime(w_2)=0$. On the other hand, $u_n^2\rightharpoonup 0$ in $H^1(\R^3)$ yields that $|y_n^2|\to+\infty$ and $|y_n^1-y_n^2|\to+\infty$. Consequently iterating this procedure we can obtain there are sequences $\{y_n^i\}\subset\R^3$ such that $|y_n^i|\to+\infty$ and $|y_n^i-y_n^j|\to+\infty$ if $i\neq j$ and ${u}_n^i(x)\triangleq u_n^{i-1}-w_{i-1}(\cdot-y_n^{i-1})$ with $i\geq 2$ such that
\[
u_n^i\rightharpoonup 0 \ \  \text{in}\ \ H^1(\R^3)\ \  \text{and}\ \
(I^\infty_\la)^{\prime}(w_i)=0
\]
and
 \begin{equation}\label{Feb12}
\left\{%
\begin{array}{ll}
  \displaystyle   \|u_n\|^2-\|u_0\|^2-\sum_{i=1}^{k-1}\|w_i(\cdot-y_n^i)\|^2
  =\bigg\|u_n-u_0-\sum_{i=1}^{k-1}w_i(\cdot-y_n^i)\bigg\|^2+o(1) \\
 \displaystyle   I_{V,\la}(u_n)=I_{V,\la}(u_0)+\sum_{i=1}^{k-1}I_\la^\infty(w_i)+I_\la^\infty(u_n^k)+o(1).  \\
\end{array}%
\right.
\end{equation}
Since $\{u_n\}$ is bounded in $H^1(\R^3)$, then the iteration stops at some finite index $k+1$ by \eqref{Feb12}. Consequently $u_n^{k+1}\to 0$ in $H^1(\R^3)$ and using \eqref{Feb12} again, we have the conclusions $(d)$ and $(e)$. The proof is completed.
\end{proof}

Using Lemma \ref{global}, we can prove the functional $I_{V,\la}$ satisfies the so-called $(PS)_{c_\la}$ condition, that is,
\begin{lemma}\label{PScondition}
Assume that $(V_1)-(V_2)$ and $(f_1)-(f_3)$ hold, for $\lambda\in [\delta, 1]$ and let $\{u_n\}\subset H^1(\R^3)$ be a bounded $(PS)_{c_\lambda}$ sequence of $I_{V,\lambda}$. Then there exists a nontrivial $u_\lambda\in H^1(\R^3)$ such that
\[
u_n\to u_\lambda\ \ \text{in}\ \ H^1(\R^3).
\]
\end{lemma}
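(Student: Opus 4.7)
The plan is to invoke Lemma \ref{global} (the global compactness lemma) together with the strict inequality $c_\la<m_\la^\infty$ from Lemma \ref{compares} in order to rule out the splitting of the bounded $(PS)_{c_\la}$ sequence into profiles escaping to infinity. Concretely, I would first apply Lemma \ref{global} to obtain an integer $k\in\NN\cup\{0\}$, points $y_n^i\in\R^3$ with $|y_n^i|\to\infty$, a weak limit $u_0\in H^1(\R^3)$ with $I_{V,\la}^{\prime}(u_0)=0$, and nontrivial $w_i\in H^1(\R^3)$ with $(I_\la^\infty)^{\prime}(w_i)=0$ such that
\[
\Big\|u_n-u_0-\sum_{i=1}^{k}w_i(\cdot-y_n^i)\Big\|\to 0,\qquad I_{V,\la}(u_n)\to I_{V,\la}(u_0)+\sum_{i=1}^{k}I_\la^\infty(w_i).
\]

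The main step is then to argue that $k=0$. For each $w_i\neq 0$, the identity $(I_\la^\infty)^{\prime}(w_i)=0$ forces (via the Poho\v{z}aev identity for the limit functional, which is the $V\equiv V_\infty$ version of Lemma \ref{Pohozaev}) that $P_\la^\infty(w_i)=0$, so by the definition of $m_\la^\infty$ one has $I_\la^\infty(w_i)\geq m_\la^\infty$. Combined with the observation made inside the proof of Lemma \ref{global} (see \eqref{gggggg}) that $I_{V,\la}(u_0)\geq 0$, this would give
\[
c_\la=I_{V,\la}(u_0)+\sum_{i=1}^k I_\la^\infty(w_i)\geq k\, m_\la^\infty,
\]
which contradicts Lemma \ref{compares} whenever $k\geq 1$. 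Hence $k=0$, and the decomposition collapses to $u_n\to u_0$ strongly in $H^1(\R^3)$.

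Finally I would check that $u_\la:=u_0$ is nontrivial: since $k=0$ and $u_n\to u_0$ strongly, the energy passes to the limit and gives $I_{V,\la}(u_0)=c_\la$, but $c_\la>0$ by Lemma \ref{2Mountpass}, so $u_0\not\equiv 0$. The principal obstacle in this argument is the possibility that the "at infinity" profiles $w_i$ could have arbitrarily small energy, which would destroy the counting contradiction; this is exactly where the strict inequality $c_\la<m_\la^\infty$ in Lemma \ref{compares} (made possible by the refined Mountain-pass estimate of Lemma \ref{estimateVK} together with the sharp Sobolev-type constant below $\tfrac{2}{5}\la^{-1/4}S^{3/2}$) is indispensable; everything else is a routine bookkeeping of the profile decomposition.
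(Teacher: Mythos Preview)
Your proposal is correct and follows essentially the same route as the paper: apply Lemma \ref{global}, use $I_{V,\la}(u_0)\geq 0$ (derived from the Poho\v{z}aev identity together with $(V_1)$, as in \eqref{gggggg}) and $I_\la^\infty(w_i)\geq m_\la^\infty$ to force $c_\la\geq m_\la^\infty$ whenever $k\geq 1$, contradict Lemma \ref{compares}, and conclude $k=0$ so that $u_n\to u_0$ strongly with $I_{V,\la}(u_0)=c_\la>0$. The paper re-derives the inequality $I_{V,\la}(u_\la)\geq 0$ explicitly inside this lemma rather than citing \eqref{gggggg}, but the argument is otherwise identical.
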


\begin{proof}
According to Lemma \ref{global}, we know that for $\lambda\in [\delta, 1]$, there exists a $u_\lambda\in H^1(\R^3)$ such that
\[
u_n\rightharpoonup u_\lambda,\ \ I^{\prime}_{V, \lambda}(u_\lambda)=0.
\]
In order to end the proof, it is enough to show that $k\equiv 0$ by Lemma \ref{global} $(d)$. To show it, we argue by contradiction and suppose that $k>0$. By contradiction, we assume there exist sequences $\{y^i_n\}\subset \R^3$ with $|y^i_n|\to +\infty$ for every $i\in\{1,2,\cdots,k\}$ such that
\[
 I_{V, \lambda}(u_n)\to I_{V, \lambda}(u_\lambda)+ \sum_{i=1}^{i=k}I_{\lambda}^\infty(w_i),
\]
 where $w_i$ is critical point of $I_{\lambda}^\infty$ for $i\in\{1,2,\cdots,k\}$. It is clear that $I_{\lambda}^\infty(w_i)\geq m_\lambda^\infty$ and the last thing is to show $I_{V, \lambda}(u_\lambda)\geq 0$.

We give the following notations for simplicity
\begin{equation}\label{simple21}
 \alpha_\la\triangleq\int_{\R^3}|\nabla u_\la|^2dx,\,\ \beta_\la\triangleq\int_{\R^3}V(x)|u_\la|^2dx,\ \ \overline{\beta}_\la\triangleq\int_{\R^3}(x,\nabla V)|u_\la|^2dx,
\end{equation}
\begin{equation}\label{simple22}
 \delta_\la\triangleq\la\int_{\R^3} \phi_{u_\la}|u_\la|^5dx,
  \ \ \kappa_\la\triangleq \la\int_{\R^3}F(u_\la)dx,
\end{equation}
hence we have
\begin{equation}\label{simple23}
\left\{
  \begin{array}{ll}
  I_{V, \lambda}(u_\la)=\frac{1}{2} \alpha_\la+\frac{1}{2}\beta_\la-\frac{1}{10}\delta_\la-\kappa_\la,\\
P_{V, \lambda}(u_\la)=\frac{1}{2} \alpha_\la+\frac{3}{2}\beta_\la+\frac{1}{2}\overline{\beta}_\la-\frac{1}{2}\delta_\la
-3\kappa_\la=0.
  \end{array}
\right.
\end{equation}
 By means of the H\"{o}lder inequality, \eqref{Sobolev} and the assumption $(V_1)$, one has
\begin{eqnarray}\label{simple24}
\nonumber|\overline{\beta}_\la|&=&\bigg|\int_{\R^3}(x,\nabla V)|u_\la|^2dx\bigg| \leq \bigg(\int_{\R^3}|(x,\nabla V)|^{\frac{3}{2}}\bigg)^{\frac{2}{3}}  \bigg(\int_{\R^3}|u_\la|^{6}\bigg)^{\frac{1}{3}}\\
  &\leq& S S^{-1}\int_{\R^3}|\nabla u_\la|^2dx= \alpha_\la.
\end{eqnarray}
It follows from \eqref{simple21}-\eqref{simple24} that
\begin{eqnarray*}
\nonumber  I_{V, \lambda}(u_\la)&=&I_{V, \lambda}(u_\la)- \frac{1}{3}P_{V, \lambda}(u_\la)\\
 &=&\frac{1}{3}\alpha_\la-\frac{1}{6}\overline{\beta}_\la+\frac{1}{15}\delta_\la
 \geq  \frac{1}{6} \alpha_\la,
\end{eqnarray*}
which gives that $I_{V,\lambda}(u_\lambda)\geq 0$. Therefore, we infer that
\[
c_\lambda=\lim_{n\to\infty}I_{V, \lambda}(u_{n})=I_{V, \lambda}(u_\lambda)+\sum_{i=1}^k I^\infty_\lambda(w_i)\geq m_\lambda^\infty,
\]
which yields a contradiction in view of Lemma \ref{compares}. So $u_n\to u_\lambda$ in $H^1(\R^3)$. Consequently, we obtain $I_{V, \lambda}(u_\lambda)=c_\lambda$ and $I^{\prime}_{V, \lambda}(u_\lambda)=0$.
 \end{proof}

Now we will establish a least energy solution for the system \eqref{mainequation1}, we define
\[
m_{V }\triangleq\inf_{u\in \mathcal{S}_{V }}I_{V }(u),
\]
where $\mathcal{S}_{V }\triangleq\{u\in H^1(\R^3)\backslash\{0\}:I^{\prime}_{V }(u)=0\}$.
\vskip3mm
\begin{lemma}\label{S}
$\mathcal{S}_{V }\neq \emptyset$.
\end{lemma}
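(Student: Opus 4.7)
The plan is to construct a nontrivial critical point of $I_V=I_{V,1}$ by an approximation argument: solve the penalized problem for a sequence of parameters $\lambda_n\nearrow 1$ and then pass to the limit. Concretely, Proposition \ref{proposition} combined with Lemma \ref{PScondition} guarantees that for almost every $\lambda\in[\delta,1]$ there exists a nontrivial $u_\lambda\in H^1(\R^3)$ with $I_{V,\lambda}(u_\lambda)=c_\lambda$ and $I_{V,\lambda}'(u_\lambda)=0$. I would pick a sequence $\lambda_n\nearrow 1$ inside this full-measure subset of $[\delta,1]$ and set $u_n:=u_{\lambda_n}$. The strategy is then to show $\{u_n\}$ is bounded in $H^1(\R^3)$, that it is a $(PS)_{c_1}$ sequence for $I_V$, and that the Palais--Smale condition recovers a nontrivial critical point of $I_V$.

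The crux is the uniform $H^1$ bound on $\{u_n\}$. Since $u_n$ solves the $\lambda_n$-equation it satisfies the corresponding Poho\v{z}aev identity $P_{V,\lambda_n}(u_n)=0$. Mimicking the manipulation performed in Lemma \ref{PScondition}, one obtains
\[
c_{\lambda_n}=I_{V,\lambda_n}(u_n)-\tfrac{1}{3}P_{V,\lambda_n}(u_n)=\tfrac{1}{3}\int_{\R^3}|\nabla u_n|^2dx-\tfrac{1}{6}\int_{\R^3}(x,\nabla V)|u_n|^2dx+\tfrac{\lambda_n}{15}\int_{\R^3}\phi_{u_n}|u_n|^5dx.
\]
The sharp hypothesis $(V_1)$, together with H\"older and \eqref{Sobolev}, yields $\big|\int(x,\nabla V)|u_n|^2\big|\le\int|\nabla u_n|^2$, so that $c_{\lambda_n}\ge\tfrac{1}{6}\int|\nabla u_n|^2$. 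Since $\lambda\mapsto c_\lambda$ is non-increasing (Proposition \ref{proposition}$(c)$), $c_{\lambda_n}\le c_\delta$, which bounds $\|\nabla u_n\|_2$, hence $|u_n|_6$, hence $\int\phi_{u_n}|u_n|^5$ by \eqref{Sobolev2}. Testing $\langle I_{V,\lambda_n}'(u_n),u_n\rangle=0$ and using \eqref{growth} with $\epsilon$ small then closes the $L^2$ part and gives $\|u_n\|\le C$.

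Once boundedness is in hand, the elementary identity
\[
I_V(u_n)-I_{V,\lambda_n}(u_n)=(1-\lambda_n)\Big[\tfrac{1}{10}\int_{\R^3}\phi_{u_n}|u_n|^5dx+\int_{\R^3}F(u_n)dx\Big]
\]
and its analogue at the level of derivatives show $I_V(u_n)-I_{V,\lambda_n}(u_n)\to 0$ and $I_V'(u_n)-I_{V,\lambda_n}'(u_n)\to 0$ in $H^{-1}(\R^3)$. Using the left continuity of $\lambda\mapsto c_\lambda$ at $\lambda=1$, $c_{\lambda_n}\to c_1$, and $\{u_n\}$ becomes a bounded $(PS)_{c_1}$ sequence for $I_V$. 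Since Lemma \ref{estimateVK} at $\lambda=1$ gives $c_1<\tfrac{2}{5}S^{3/2}$, Lemma \ref{PScondition} applies with $\lambda=1$ and produces $u^\ast\in H^1(\R^3)$ with $u_n\to u^\ast$ strongly, $u^\ast\not\equiv 0$ and $I_V'(u^\ast)=0$; hence $u^\ast\in\mathcal{S}_V$.

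I expect the boundedness step to be the main obstacle. In the absence of $(AR)$ or monotonicity $(M)$, one cannot use Nehari-type tricks; the gradient bound has to be squeezed out of the Poho\v{z}aev identity, and this is precisely where the \emph{sharp} form of $(V_1)$ (calibrated against the Sobolev constant $S$) is essential. A secondary nuisance is that Proposition \ref{proposition} only furnishes solutions of the truncated problem for a.e.\ $\lambda$, so one must ensure the approximating sequence $\{\lambda_n\}$ lies in this full-measure subset; the left continuity in Proposition \ref{proposition}$(c)$ then guarantees $c_{\lambda_n}\to c_1$ on this restricted sequence.
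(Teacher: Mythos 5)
Your proposal is correct and follows essentially the same route as the paper: obtain nontrivial critical points $u_{\lambda_n}$ of $I_{V,\lambda_n}$ for a sequence $\lambda_n\nearrow 1$ via Proposition \ref{proposition} and Lemma \ref{PScondition}, derive uniform boundedness from the Poho\v{z}aev identity $P_{V,\lambda_n}(u_{\lambda_n})=0$ combined with $(V_1)$, show $\{u_{\lambda_n}\}$ is a $(PS)_{c_1}$ sequence for $I_V$ using the left continuity of $\lambda\mapsto c_\lambda$, and conclude with Lemma \ref{PScondition} at $\lambda=1$. Your write-up is in fact somewhat more explicit than the paper's (which delegates the boundedness step to "similar to the proof of Theorem \ref{maintheorem2}"), but the underlying argument is identical.
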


\begin{proof}
It follows from Proposition \ref{proposition} and Lemma \ref{2Mountpass} that for almost everywhere $\lambda\in[\delta, 1]$, there exists a bounded sequence $\{u_n\}\subset H^1(\R^3)$ such that
$$
I_{V, \lambda}(u_n )\to c_\lambda, \ \ I^{\prime}_{V, \lambda}(u_n )\to 0.
$$
Recalling Lemma \ref{PScondition}, $I_{V, \lambda}$ has a nontrivial critical point $u_\lambda\in H^1(\R^3)$ and $I_{V, \lambda}(u_\lambda)=c_\lambda$. By means of the above discussions again and again, there exists a sequence $\{\lambda_n\}\subset [\delta, 1]$ with $\lambda_n\to1^-$ and an associated sequence $\{u_{\lambda_n}\}\subset H^1(\R^3)$ such that $I_{V, \lambda_n}(u_{\lambda_n})=c_{\lambda_n}$ and $I^{\prime}_{V, \lambda}(u_{\lambda_n})=0$.
Note that the Lemma \ref{Pohozaev}, then $P_{V, \la_n}(u_{\lambda_n})=0$. Hence it is similar to the proof of Theorem \ref{maintheorem2} that $\{u_{\lambda_n}\}$ is bounded is $H^1(\R^3)$.

Since $\lambda_n\to1^-$, we claim that $\{u_{\lambda_n}\}$ is a $(PS)_{c_1}$ sequence of $I_{V }=I_{V, 1}$. In fact, as a consequence of Proposition \ref{proposition} (c) we obtain that
\[
\lim_{n\to\infty} I_{V, 1}(u_{\lambda_n})=\bigg(\lim_{n\to\infty}I_{V, \lambda_n}(u_{\lambda_n})
+ (\lambda_n-1)\int_{\R^3}F(u_{\lambda_n})dx\bigg)=\lim_{n\to\infty}c_{\lambda_n}=c_1
\]
and for all $\psi\in H^1(\R^3)\backslash\{0\}$,
\begin{eqnarray*}
 \lim_{n\to\infty} \frac{|\langle I^{\prime}_{V, 1}(u_{\lambda_n}),\psi\rangle|}{\|\psi\|}&=& \lim_{n\to\infty}\frac{\big|\langle I^{\prime}_{V, \lambda_n}(u_{\lambda_n}),\psi\rangle
+ (\lambda_n-1) \int_{\R^3}f(u_{\lambda_n})\psi dx\big|}{\|\psi\|} \\
   &\leq&\lim_{n\to\infty} \frac{|\lambda_n-1|\int_{\R^3}|f(u_{\lambda_n})||\psi| dx}{\|\psi\|} \\
   &\stackrel{\mathrm{\eqref{growth}}}{\leq} & \lim_{n\to\infty}  \frac{|\lambda_n-1|}{\|\psi\|}\bigg( \epsilon|u_{\lambda_n}|_{2}|\psi|_2+ C_\epsilon|u_{\lambda_n}|_{6}^5|\psi|_6 \bigg) \to 0,
\end{eqnarray*}
which imply that $\{u_{\lambda_n}\}$ is a $(PS)_{c_1}$ sequence of $I_{V }=I_{V, 1}$, where we have used the fact that $\{u_{\lambda_n}\}$ is bounded in $H^1(\R^3)$. Then by Lemma 3.5, $I_{V }$ has a nontrivial critical point, that is, $I_{V }(u_0)=c_1$ and $I^{\prime}_{V }(u_0)=0$. The proof is complete.
\end{proof}

\begin{lemma}\label{m}
For $q\in(3,5)$ with any $\mu>0$, or $q\in(1,3]$ with sufficiently large $\mu>0$, there holds
$$
0 <m_{V } <\frac{2}{5} S^{\frac{3}{2}}.
$$
\end{lemma}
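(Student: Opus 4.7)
The upper bound $m_V < \frac{2}{5}S^{3/2}$ is essentially immediate from the existence machinery already built. The proof of Lemma~\ref{S} produced a nontrivial critical point $u_0 \in \mathcal{S}_V$ with $I_V(u_0) = c_1$. Specializing Lemma~\ref{estimateVK} to $\lambda = 1$ yields $c_1 < \frac{2}{5}S^{3/2}$ in exactly the stated regimes of $q$ and $\mu$, and therefore $m_V \leq I_V(u_0) < \frac{2}{5}S^{3/2}$.

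For the strict positivity $m_V > 0$, the plan is to use a Poho\v{z}aev-based cancellation argument that crucially exploits the scale-invariant condition $(V_1)$. Fix any $u \in \mathcal{S}_V$. By Lemma~\ref{Pohozaev}, $P_V(u) = 0$, so forming $I_V(u) - \frac{1}{3}P_V(u)$ cancels both the $Vu^2$ term and the $F(u)$ term, leaving
\[
I_V(u) = \frac{1}{3}\int_{\R^3}|\nabla u|^2 dx - \frac{1}{6}\int_{\R^3}(x,\nabla V)|u|^2 dx + \frac{1}{15}\int_{\R^3}\phi_u|u|^5 dx.
\]
Applying H\"older with exponents $\frac{3}{2}$ and $3$, the Sobolev inequality $|u|_6^2 \leq S^{-1}|\nabla u|_2^2$, and the sharp bound $(V_1)$ gives $\big|\int(x,\nabla V)|u|^2\big| \leq S \cdot S^{-1}|\nabla u|_2^2 = |\nabla u|_2^2$, and consequently
\[
I_V(u) \geq \frac{1}{6}|\nabla u|_2^2 + \frac{1}{15}\int_{\R^3}\phi_u|u|^5 dx.
\]

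It remains to rule out $|\nabla u|_2 \to 0$ along $\mathcal{S}_V$. Testing $\langle I'_V(u), u\rangle = 0$, invoking \eqref{growth}, \eqref{Sobolev2}, and the norm equivalence on $H^1(\R^3)$ provided by $(V_2)$ (cf.\ Remark~\ref{potential}), one obtains with $\epsilon$ chosen small
\[
\frac{1}{2}\|u\|^2 \leq S^{-6}\|u\|^{10} + C\|u\|^6,
\]
yielding a uniform $\alpha > 0$ with $\|u\| \geq \alpha$ on $\mathcal{S}_V$. If we had $m_V = 0$, a sequence $\{u_n\} \subset \mathcal{S}_V$ with $I_V(u_n) \to 0$ would satisfy $|\nabla u_n|_2 \to 0$; then $|u_n|_6 \to 0$ by Sobolev and $\int \phi_{u_n}|u_n|^5 dx \to 0$ by \eqref{Sobolev2}. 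Re-running $\langle I'_V(u_n), u_n\rangle = 0$ together with \eqref{growth} and the norm equivalence would force $\|u_n\| \to 0$, contradicting $\|u_n\| \geq \alpha$. The delicate point throughout is the sharp balance in $(V_1)$: the constant $S$ on the right is exactly what allows the cross term to be absorbed into $\frac{1}{3}|\nabla u|_2^2$ with a strictly positive remainder, and any weakening of this inequality would destroy the pointwise bound $I_V(u) \geq \frac{1}{6}|\nabla u|_2^2$ that drives the entire argument.
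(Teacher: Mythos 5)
Your proof is correct and follows essentially the same route as the paper: the upper bound via Lemma \ref{estimateVK} at $\lambda=1$ combined with the critical point from Lemma \ref{S}, and the positivity via $I_V(u)=I_V(u)-\tfrac{1}{3}P_V(u)\geq\tfrac{1}{6}|\nabla u|_2^2$ using $(V_1)$, together with a uniform lower bound on $\mathcal{S}_V$ obtained from $\langle I_V'(u),u\rangle=0$. The only cosmetic difference is that the paper bounds $|\nabla u|_2$ from below directly, while you bound $\|u\|$ and then derive the contradiction by showing $\|u_n\|\to 0$; both hinge on the same ingredients.
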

\begin{proof}
It follows from Lemma \ref{estimateVK} that $m_{V } <\frac{2}{5} S^{\frac{3}{2}}$ when $q\in(3,5)$ with any $\mu>0$, or $q\in(1,3]$ with sufficiently large $\mu>0$. Next we show $m>0$,
for any $u\in \mathcal{S}_{V }$, the Pohoz\v{a}ev identity \eqref{Pohozaev1} holds, that is, $P_{V }(u) =0$. Therefore we have that
\begin{eqnarray}\label{mm}
\nonumber I_{V }(u)&=& I_{V }(u)-\frac{1}{3}P_{V }(u)\\
\nonumber&=&\frac{1}{3}\int_{\R^3}|\nabla u|^2dx-\frac{1}{6}\int_{\R^3}(x,\nabla V)|u|^2dx+ \frac{1}{15}\int_{\R^3} \phi_u|u|^5dx \\
\nonumber&\geq&\frac{1}{3}\int_{\R^3}|\nabla u|^2dx-\frac{1}{6}\int_{\R^3}(x,\nabla V)|u|^2dx\\
&\stackrel{\mathrm{\eqref{simple124}}}{\geq}&\frac{1}{6}\int_{\R^3}|\nabla u|^2dx
\end{eqnarray}
which implies that $m_V\geq 0$. Before we rule out $m_V =0$, we claim that there exists a constant $\varrho>0$ such that
\[
\int_{\R^3}|\nabla u|^2dx\geq \varrho>0,\ \  \forall u\in \mathcal{S}_{V }.
\]
Indeed, using $\langle I^{\prime}_{V }(u),u \rangle=0$, one has
\begin{eqnarray*}
\int_{\R^3}|\nabla u|^2+|u|^2dx&=&\int_{\R^3} \phi_u|u|^5dx+\int_{\R^3}uf(u)dx \\
  &\stackrel{\mathrm{\eqref{Sobolev0}}}{\leq}&  S^{-1}\bigg(\int_{\R^3}|  u|^6dx\bigg)^{\frac{5}{3}}+\epsilon\int_{\R^3}|u|^2dx+ C_\epsilon\int_{\R^3}|u|^6dx \\
   &\stackrel{\mathrm{\eqref{growth}}}{\leq} &  S^{-6}\bigg(\int_{\R^3}|\nabla u|^2dx\bigg)^5+\epsilon\int_{\R^3}|u|^2dx+ S^{-3}C_\epsilon\bigg(\int_{\R^3}|\nabla u|^2dx\bigg)^3.
\end{eqnarray*}
Let $\epsilon=1$ in the above formula, we can infer that
\[
\int_{\R^3}|\nabla u|^2dx\leq  S^{-6}\bigg(\int_{\R^3}|\nabla u|^2dx\bigg)^5+C S^{-3}\bigg(\int_{\R^3}|\nabla u|^2dx\bigg)^3,
\]
which gives us that the claim is true, where we have used the fact that $|\nabla u|_2>0$ otherwise $u\equiv 0$ by \eqref{Sobolev} which is a contradiction to $u\in \mathcal{S}_{V }$.

Now we show $m_V>0$, argue it by contradiction and assume there exists a sequence $\{u_n\}\subset \mathcal{S}_{V }$ such that $I_{V }(u_n)\to 0$. Hence from \eqref{mm}, one has
\[
\int_{\R^3}|\nabla u_n|^2dx\to 0,
\]
which is a contradiction to $|\nabla u_n|_2^2\geq \varrho>0$. Thus $m_{V }>0$, the proof is complete.
\end{proof}
\vskip0.3cm
\begin{proof}[\textbf{Proof of Theorem \ref{maintheorem1}}]
Let $\{u_n\}$ be a minimizing sequence of $m_{V }=\inf_{u\in \mathcal{S}_{V }}I_{V }(u)$, that is, the sequence $\{u_n\}\subset H^1(\R^3)$ satisfies $I^{\prime}_{V }(u_n) =0$ and $I_{V }(u_n)\to m_{V }$. Similar to the arguments in the proofs of Lemma \ref{S} and Lemma \ref{m}, we can conclude that $\{u_n\}$ is a bounded $(PS)_{m_{V }}$ sequence of $I_{V }$. Then by virtue of Lemma \ref{PScondition}, there exists a function $u\in H^1(\R^3)$ such that
\[
I^{\prime}_{V }(u) =0\ \ \text{and}\ \ I_{V }(u)=m_{V }>0.
\]
As Step 3 in the proof of Theorem \ref{maintheorem2}, we have that $u(x)>0$ in $\R^3$.
It is therefore that $(u,\phi_u)\in H^1(\R^3)\times D^{1,2}(\R^3)$ is a positive least energy solution to the system \eqref{mainequation1}. The proof is complete.
\end{proof}
\vskip0.3cm
\begin{proof}[\textbf{Proof of Corollary \ref{corollary}}]
The proof is totally same as that of Theorem \ref{maintheorem1} except \eqref{gggggg}, hence we just show $I_{V, \la}(u_0)\geq 0$ when the assumption $(V_1)$ is replaced by $(V_3)$. In fact, using $(V_3)$ and \eqref{Hardy}, one has
\begin{eqnarray*}
 |\overline{\beta}|&=&\bigg|\int_{\R^3}(x,\nabla V)|u_0|^2dx\bigg| \leq A\int_{\R^3}\frac{|u_0|^2}{|x|^2}dx\\
  &\leq&4A\int_{\R^3}|\nabla u_0|^2dx\leq \alpha.
\end{eqnarray*}
Hence similar to the proof of \eqref{gggggg},
\begin{eqnarray*}
  I_{V, \lambda}(u_0)&=&I_{V, \lambda}(u_0)- \frac{1}{3}P_{V, \lambda}(u_0)\\
 &=&\frac{1}{3}\alpha-\frac{1}{6}\overline{\beta}+\frac{1}{15}\delta
 \geq  \frac{1}{6} \alpha,
\end{eqnarray*}
which yields that $I_{V, \lambda}(u_0)\geq 0$. The proof is complete.
\end{proof}

\textbf{Acknowledgements:} The authors are supported by NSFC (Grant No. 11371158),
the program for Changjiang Scholars and Innovative Research Team in University (No.
IRT13066).

\end{document}